\let\mathcal\mathscr
\newtheorem{theorem}{Theorem}
\newtheorem{lemma}[theorem]{Lemma}
\newtheorem{proposition}[theorem]{Proposition}
\theoremstyle{definition}
\newtheorem*{acknowledgements}{Acknowledgements}
\newtheorem*{notation}{Notation}
\numberwithin{theorem}{section}
\numberwithin{equation}{section}
\DeclareSymbolFont{bbold}{U}{bbold}{m}{n}
\DeclareSymbolFontAlphabet{\mathbbold}{bbold}
\DeclareMathOperator{\vol}{vol}
\newcommand{\md}[1]{  \left(\textnormal{mod}\ #1\right)}
\newcommand{\lab}{\label} 
\renewcommand{\P}{\mathbb{P}}
\newcommand{\Q}{\mathbb{Q}}
\newcommand{\F}{\mathbb{F}}
\newcommand{\N}{\mathbb{N}}
\newcommand{\R}{\mathbb{R}}
\newcommand{\Z}{\mathbb{Z}}
\newcommand{\Zp}{\mathbb{Z}_{\text{prim}}}
\renewcommand{\l}{\left}
\newcommand{\m}{\mathfrak{m}}
\renewcommand{\r}{\right}
\renewcommand{\b}{\mathbf} 
\newcommand{\bb}{\boldsymbol}
\renewcommand{\c}{\mathcal} 
\renewcommand{\gcd}{\textrm{gcd}} 
\renewcommand{\leq}{\leqslant}
\renewcommand{\geq}{\geqslant}
\renewcommand{\#}{\sharp}
\renewcommand{\gg}{\ggg}
\renewcommand{\ll}{\lll}
\newcommand{\p}{\mathfrak{p}}
\DeclareMathOperator*{\Osum}{\sum{}^*}
\newcommand{\beq}[2]
{
\begin{equation}
\label{#1}
{#2}
\end{equation}
}
\begin{document}

\date{}

\title[
Density of
isotropic fibres in conic bundles
]{
Serre's problem on
the density of
isotropic fibres
in conic bundles
}
 
\author{E. Sofos}

\subjclass{$14$G$05$; $14$D$10$, $11$N$36$, $11$G$35$}

\address{
Universiteit Leiden, %"Leiden Universiteit" is absurd in Dutch 
Mathematisch Instituut Leiden,
Snellius building, Niels Bohrweg 1,
2333 CA Leiden, Netherlands.}
\email{e.sofos@math.leidenuniv.nl}

\begin{abstract}
Let $\pi:X\to \P^1_{\Q}$ be a non-singular conic bundle over $\Q$ 
having $n$ 
non-split fibres
and
denote by
$N(\pi,B)$
the cardinality of the
fibres of Weil height at most
$B$
that possess a rational point.
Serre showed
in $1990$
that a direct application of the large sieve
yields
\[
N(\pi,B)
\ll B^2(\log B)^{-n/2}
\]
and
raised the problem
of proving that this is
the true order of magnitude of $N(\pi,B)$
under the necessary assumption that there exists
at least one smooth fibre with a rational point.
We solve this problem for all non-singular conic bundles of
rank at most $3$.
Our method comprises the use of Hooley neutralisers,
estimating divisor sums over values of binary forms,
and an application of the Rosser--Iwaniec sieve.
\end{abstract}

\maketitle

\setcounter{tocdepth}{1}
\tableofcontents

\section{Introduction}
\lab{detect} 

The asymptotic distribution of the members of a family of varieties that have a rational point
has been the focus of intensive investigation during recent years.
There are families of
Fano varieties over $\Q$
where
the percentage of fibres
with a $\Q$-point exists and is positive.
Poonen and Voloch~\cite{poonen_voloch} have verified 
this in the case of hypersurfaces on the condition that the Brauer-Manin obstruction is the
only obstruction to the Hasse principle for rational points
on certain varieties.
Research on this theme has subsequently flourished; 
the interested
reader is
referred to the recent work
of Loughran and Smeets~\cite{smeets}
and the thorough list of references it provides.
It is noteworthy that 
families of conics were
excluded from
the Poonen-Voloch results 
since in this case the percentage is expected to vanish.
Our aim in this paper is to focus on this exceptional case.

The exceptional behavior,
first noticed by Serre~\cite{Ser90},
can be explained, for example,
through~\cite[Th.1.2]{bharg} of
Bhargava, Cremona, Fisher, Jones,
and Keating, where
it is stated that
the
probability that a quadratic form in 
$3$
variables over
$\Z_p$ 
is isotropic
is given by
\[ \rho_3(p)=1-\frac{p}{2(p+1)^2}.\]
Owing to the Hasse principle,
this suggests that the percentage of 
isotropic planar conics over $\Q$ with coefficients of 
size $B$
should vanish 
asymptotically
as $B\to \infty$
and, 
more precisely,
it should decrease like
\[\prod_{p\leq B} \rho_3(p)
\asymp (\log B)^{-1/2}.\]

Serre~\cite{Ser90} used the large sieve
to
prove
upper bounds
of the aforementioned order of magnitude
and
raised the problem of verifying that
this is
the correct order of magnitude.
Obtaining
precise lower bounds
is a genuinely harder problem,
since one needs a method for finding rational points.
There are fewer difficulties when the generic fibre satisfies the Hasse principle,
however, even in this case,
there have
been only very special cases in which the problem has been solved.
More specifically,
Hooley~\cite{hool1}
and
Guo~\cite{guo}
proved the correct lower bound for the case of diagonal planar conics,
and later,
Hooley~\cite{hool2}
proved a
similar
result in the case of general planar conics.

Let $\pi:X\to \P^1_{\Q}$ be a non-singular conic bundle over $\Q$.
A singular fibre $\pi^{-1}(\m)$
above a closed point  $\m \in  \P^1_{\Q}$
is called \emph{split} if
both of its components are
defined over
the residue field $\Q(\m)$
and \emph{non-split} otherwise.
Denoting the set of non-split fibres 
by $M(\pi)$,
and letting \[n=\sharp M(\pi),\]
Serre's
problem
reads as follows:\textit{
assuming that
there exists a smooth 
$\Q$-isotropic fibre of $\pi$,
then verify that
the quantity
\[
N(\pi,B)
:=\#\Big\{x \in \P^1_{\Q}:
H(x)\leq B, 
\pi^{-1}(x) \ \text{has a rational point} 
\Big\}
\]
satisfies 
\beq{intro ser}
{
N(\pi,B)
\asymp \frac{B^2}{(\log B)^{n/2}},
}
where $H$ is
the usual Weil height on $\P^1_{\Q}$.}
It should be noted that Loughran~\cite{Lou13}
has stated a broader and more precise
form of~\eqref{intro ser}
that he
furthermore verified in specific situations.
Following Skorobogatov~\cite{skoratsis}, we let
\[
r(\pi)=\sum_{\m \in M(\pi)}
[\Q(\m):\Q]
\]
denote the \emph{rank} of the fibration $\pi$.
As a special case of~\cite[Cor.1.4]{Lou13}
the estimate~\eqref{intro ser} is promoted to an asymptotic formula
when $r(\pi)=2$ and $n=1$,
and
the proof
uses harmonic analysis
on toric varieties.  
Our main theorem settles Serre's problem in cases 
where one lacks such a structure.
\vskip 1cm 
\begin{theorem}
\label{thm:lower}
Let $\pi:X\to \P^1_{\Q}$ be a
non-singular conic bundle over $\Q$ of rank
$r(\pi)\leq 3$
and assume that there exists a smooth fibre with a rational point. 
Then
\[
N(\pi,B)
\asymp
\frac{B^2}{(\log B)^{n/2}}.\]
\end{theorem}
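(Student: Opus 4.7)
Serre's large-sieve upper bound $N(\pi,B)\ll B^2(\log B)^{-n/2}$ is already known, so the substance of the theorem is the matching lower bound. Since conics satisfy the Hasse principle, it suffices to produce $\gg B^2(\log B)^{-n/2}$ primitive integer pairs $(a,b)$ of height $\max(|a|,|b|)\leq B$ for which $\pi^{-1}(a:b)$ is locally solvable at every place of $\Q$. Attach to each closed non-split point $\mathfrak{m}_i\in M(\pi)$ an irreducible binary form $f_i\in\Z[x,y]$ of degree $d_i=[\Q(\mathfrak{m}_i):\Q]$ vanishing on $\mathfrak{m}_i$. Using the existence of a smooth $\Q$-isotropic fibre, I would first fix admissible congruence classes modulo a finite bad modulus, so that local solvability at the bad places and at $\infty$ is automatic on a positive-density subset of primitive pairs; this reduces the problem to imposing local solvability at the remaining good primes.

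\textbf{Reduction to a sieve.} A standard Hilbert-symbol computation identifies the condition ``locally solvable at every good $p$'' with a system of Frobenius-type conditions on the values $f_i(a,b)$: for each $i$ and each prime $p$ at which $v_p(f_i(a,b))$ is odd, a quadratic character $\chi_i$ attached to the Brauer class of $\pi$ near $\mathfrak{m}_i$ must evaluate trivially at $p$. By Chebotarev the primes failing $\chi_i$ have density $1/2$, and expanding the resulting product of local densities recovers the target exponent $(\log B)^{-n/2}$. To realise this target as an actual lower bound, I plan to sift $(a,b)$ by requiring that all prime divisors of $f_i(a,b)$ at which $\chi_i$ fails occur to even multiplicity. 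The natural tool is the Rosser--Iwaniec lower bound sieve, fed with error terms drawn from estimates for divisor sums over values of binary forms.

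\textbf{Rosser--Iwaniec and Hooley neutralisers.} When the combined sieve dimension stays at most $1$, Rosser--Iwaniec is directly applicable and delivers the lower bound with the correct order of magnitude; this already handles the subcases where $r(\pi)\leq 2$. When the dimension exceeds $1$, which happens for $r(\pi)=3$ and in particular for the hardest case $n=3$ of three rational non-split fibres, I plan to bypass the dimension barrier via Hooley's neutraliser trick: pre-select one prime factor $q$ of some $f_{i_0}(a,b)$ in a well-tuned dyadic range, forcing $q^2\mid f_{i_0}(a,b)$ so as to kill one component of the parity condition for free. This neutralises one unit of sieve dimension, reducing the residual problem to one accessible to Rosser--Iwaniec; summing over admissible $q$ recovers the expected density with a positive constant.

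\textbf{Error terms and main obstacle.} The arithmetic input needed by the sieve consists of upper bounds for sums of the shape $\sum_{\max(|a|,|b|)\leq B}\tau_k\bigl(F(a,b)\bigr)$ with $F=\prod_i f_i$ a binary form of total degree at most $3$, with explicit uniformity in the sieve moduli; such estimates are available in the tradition of Daniel, Browning--Heath-Brown and Henriot. The main obstacle I anticipate is the coherent implementation of the Hooley neutraliser across several fibres simultaneously while preserving a positive lower bound constant: the sieves attached to different $i$ operate on the same underlying pairs $(a,b)$, so either a multivariate vector sieve must be set up or a carefully staged iteration must be arranged, and in the neutralised ranges the error contributions must still fit within the arithmetic budget. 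Reconciling this combined sieve with the detailed local conditions at the finitely many bad primes of $\pi$, and extracting a positive leading constant throughout, is where the proof is most delicate.
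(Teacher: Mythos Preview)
Your general architecture---fix congruences at bad primes using a smooth isotropic fibre, reduce to quadratic-character conditions on the $f_i(a,b)$, feed a lower-bound sieve with level-of-distribution input from divisor sums over binary forms of total degree $\le 3$---matches the paper's. But two central pieces of your plan are miscast, and this matters because they are exactly the novelties the paper isolates.

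\textbf{Hooley neutralisers are not what you describe.} Your ``pre-select a prime $q$ and force $q^2\mid f_{i_0}(a,b)$ to kill one unit of sieve dimension'' is not the device the paper uses, and would not do the job. After expanding the local-solvability indicator one obtains (inequality~\eqref{15})
\[
N(\pi,B)\;\gg\;\sum_{(s,t)\in\c B}\ \prod_{i=1}^n \frac{r_i^*(s,t)}{2^{\omega(\Delta_i(s,t);z)}},
\qquad r_i^*(s,t)=\prod_{\substack{p\mid\Delta_i(s,t)\\p>D_0}}\Bigl(1+\Bigl(\tfrac{F_i(s,t)}{p}\Bigr)\Bigr),
\]
and the obstruction is the factor $\prod_i 2^{-\omega(\Delta_i(s,t);z)}$, which is neither multiplicative in $(s,t)$ nor expressible as a short divisor sum. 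The paper's Hooley neutraliser (Proposition~\ref{12++}, Lemma~\ref{16+}) is a purely combinatorial inequality that replaces this factor from below by $\sum_{d_i\mid\Delta_i(s,t)}\lambda^-_{d_1\cdots d_n}/\tau(d_1\cdots d_n)$ with sieve weights $\lambda^-$ of bounded support. This converts the problem into evaluating the weighted divisor sums $S_{\b d}(B;\b m)$, which is where the real work lies. No prime-forcing or dimension reduction is involved.

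\textbf{There is no dimension barrier.} The sieve dimension that appears is $\kappa=n/2$ regardless of $r(\pi)$, and the paper simply invokes the Fundamental Lemma (Lemma~\ref{funda}), which yields a positive lower-bound constant for any fixed $\kappa$ once the sifting parameter $\varpi$ is small. The cases $n=1,2,3$ are handled uniformly; nothing special happens at $\kappa=1$. The restriction $r(\pi)\le 3$ enters \emph{only} through the divisor-sum estimate (Theorem~\ref{b evans}): Daniel's method gives a power-saving error for $\sum_{d\mid\Delta(s,t)}(\cdot)$ precisely when $\deg\Delta=\sum_i\deg\Delta_i\le 3$, and this power saving is what allows the sieve weights (supported up to a small power of $B$) to be summed. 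You hint at this in passing, but you locate the main obstacle in the wrong place.

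One further ingredient you omit: the hyperbola trick in the divisor sum (Lemma~\ref{hyper0}) requires the reciprocity identity $\bigl(\tfrac{F_i(s,t)}{\Delta_i(s,t)^\dagger}\bigr)=1$ of Proposition~\ref{let it be}, which in turn rests on Hilbert reciprocity and the case analysis on the degrees $\deg\Delta_i$ permitted by $r(\pi)\le 3$. Without it the complementary-divisor symmetry breaks down.
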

\vskip 0.3cm 
The existence assumption is obviously necessary for the lower bound.
However, in the case $r(\pi)=3$  
it becomes redundant.
Indeed, when $r(\pi)=3$, 
$X$ becomes
birational 
to a quintic del Pezzo surface
by the
work of Iskovskih~\cite[Th.5]{isko}
and therefore the theorem of Enriques~\cite{enri}
combined with~\cite[Th.29.4]{maninakos}
reveals that $X$ is $\Q$-rational,
and in particular it has at least one smooth fibre with a $\Q$-point.

In what follows
we outline the proof of Theorem~\ref{thm:lower}
as well as the structure of the paper.
Constructing
indicator functions that detect the isotropic fibres
is
a key
novel ingredient of the
proof of Theorem~\ref{thm:lower}.
It allows the transformation of
$N(\pi,B)$
into an average of
%certain 
arithmetic functions over values of binary forms
(see~\eqref{15} and~\eqref{eq:explicit formula}).~The construction
will be brought to life
in~\S\ref{Aat}.

The next step takes place in~\S\ref{1}
and regards the most important technical device of our paper,
namely \textbf{Hooley neutralisers}.
Using the approach expounded by Hooley~\cite{hool3} 
allows us to extirpate certain awkward
arithmetic functions
apparent
in~\eqref{15},
by introducing new weights coming from a combinatorial sieve.
This reduces the problem to one of estimating
asymptotically 
specific divisor sums,
with an error term that exhibits a power saving.

A precise definition of 
the divisor sums at hand
is supplied
in~\eqref{s d sum}.
They are of 
%the (the alludes to an underlying uniqueness property-something which do not have)
shape
\[\sum_{|s|,|t|\leq B}
\
\prod_{i=1}^n
\
 \sum_{d_i|\Delta_i(s,t)}\l(\frac{F_i(s,t)}{d_i}\r)
,\]
where
$(\frac{\cdot}{\cdot})$
denotes the Jacobi symbol,
$F_i \in \Z[s,t]$ are forms
of even degree,
$\Delta_i \in \Z[s,t]$ are irreducible forms with 
$\sum_i\deg(\Delta_i)\leq 3$,
and the summation is over
coprime integers $s$ and $t$,
satisfying certain
congruence conditions.
There is a
large volume of literature
that builds on the seminal work
of S. Daniel~\cite{stephan}
to evaluate similar averages.
However,
his approach allows a power saving in the error term
only
in the case
$\sum_i\deg(\Delta_i) \leq 3$,
which is the only element in our proof 
of Theorem~\ref{thm:lower} obstructing
its generalisation to all non--singular conic bundles with a smooth $\Q$-isotropic fibre.
We shall state our result regarding divisor sums
in Theorem~\ref{b evans}
and its proof
will be
the sole aim in~\S\ref{stephan d}.

A confluence of entirely new
features appear in~\S\ref{stephan d}
for the first time in the context of divisor sums over values of binary forms.
Initially,
one is faced with the presence of the forms $F_i$ rather than fixed integers.
This poses added difficulties 
that require serious modification of Daniel's approach
(see Proposition~\ref{let it be}). 
A further novel
feature of our treatment
lies in the fact that our method
is uniform
in the factorisation of the
form $\prod_i \Delta_i$ over $\Q$
(see Lemma~\ref{lemmon}).
Lastly, we will prove that the modulus defining the aforementioned congruence conditions on $s$ and $t$
is allowed to assume values up to a positive power of $B$.
This \textit{level of distribution} result is the most advanced new
component of our work; we shall
put it
into form in Theorem~\ref{b evans}.

The Rosser-Iwaniec sieve
enters the stage
during~\S\ref{combo mac iwa}
and it is the final step in the proof of Theorem~\ref{thm:lower}.
As is surely
familiar to sieve experts, 
the use of sieve weights precludes the possibility of obtaining an asymptotic 
for $N(\pi,B)$ in Theorem~\ref{thm:lower}.
\begin{acknowledgements}
The problem was suggested during the  $2014$ 
AIM Workshop 
\emph{`Rational and integral points on higher-dimensional varieties'}
by Daniel Loughran
to whom
we are grateful for numerous conversations. 
We are furthermore indebted to Roger Heath--Brown for
valuable discussions.
This investigation was performed while the author was a postdoctoral researcher at
Leiden and the support of its university is greatly appreciated.
\end{acknowledgements} 
\begin{notation}
For any $k,k' \in \Z$ we shall denote their
greatest common divisor and  
least common multiple by
$\gcd(k,k')$ and $[k,k']$ respectively.
The set of all primitive integer vectors in $\Z^k$
will be denoted by
$\Zp^k$, while the $p$-adic valuation of an integer $k$
will be denoted by $\nu_p(k)$.
As usual, we let
$\tau(k)$
denote the number of positive divisors of any non-zero
integer $k$. Each
$\b{x} \in \R^k$ 
has a supremum norm which will be denoted by
$\|\b{x}\|$,
and similarly,
for a bounded set $\c{C}\subset\R^k$
we shall represent the supremum of
$\Big\{\|\b{x}\|:\b{x}\in \c{C}\Big\}$
by
$\|\c{C}\|_\infty$.
Given a polynomial
$f\in \R[x_1,\ldots,x_k]$ we shall denote the maximum absolute value of its coefficients by
$\langle f \rangle$.

The data associated to the conic bundle apparent in
Theorem~\ref{thm:lower} will be considered constant throughout.
This is taken to mean that,
although each implied constant
in the big $O$
notation
will depend on several quantities related
to $\pi$,
we shall avoid recording these dependencies.
The list of the said quantities 
consists of 
\[
\Delta,\Delta_i,F_i,\c{R},W,W_i,f_i \in \c{U},s_0,t_0,
\]
whose meaning
will become evident in due course.
Any other dependencies of the implied constants on further parameters 
will be explicitly specified via the use of a subscript.
The symbol
$\varepsilon$ will be used for a small positive parameter
and its value may vary, allowing,
for example, inequalities of the form
$x^\varepsilon \ll_\varepsilon x^{\varepsilon/2}$.
\end{notation}

\section{Preliminaries}
\lab{breath}
We shall follow closely the geometric setup 
for conic bundles as 
given in~\cite[\S 2.2]{brick}.
The fibre of 
$\pi$
above the point $[s,t] \in \P^1_\Q$
is of the shape
$Q_{s,t}=0$, where
\beq{sinister}{
Q_{s,t}(\b{x})=\sum_{i,j=1}^3
f_{ij}(s,t)x_ix_j
}
and all
$f_{ij}$
are binary
integer forms.
The
non-singularity of
$X$
shows that
the discriminant of $Q_{s,t}$,
defined through
\[
\Delta(s,t)=\frac{1}{2}\det\l(\frac{\partial^2 Q_{s,t}(\b{x})}{\partial x_i\partial x_j }\r)
,\]
is
separable.
We shall assume that all 
principal minors 
of the
matrix $(f_{ij})\in (\Z[s,t])^{3 \times 3}$
are forms of 
even degree, say $d_i \in 2\Z_{\geq 0}$
for $i=1,2,3$.

The fact that we are
interested in lower bounds 
of the correct order of magnitude
for $N(\pi,B)$
allows us to assume with no loss of generality, that the fibre at infinity is smooth,
a fact equivalent to $t\nmid \Delta(s,t)$ in $\Z[s,t]$.
It furthermore allows
us to assume that
all singular fibres are non-split,
since contracting one
line in any
split singular 
fibre does not affect the order of magnitude of $N(\pi,B)$.
We have the factorisation
\beq{factori}
{\Delta(s,t)=\prod_{i=1}^n \Delta_i(s,t),
}
where the forms $\Delta_i$ are irreducible
over $\Z[s,t]$
and coprime in pairs;
they are in correspondence with the 
singular fibres of $\pi$.
For each $i=1,\ldots,n$
we fix
a root $\theta_i \in \overline{\Q}$
of $\Delta_i(x,1)=0$
once and for all for the rest of this paper. The fact that each singular fibre is non-split
implies that non of the lines comprising the singular fibre above
$[\theta_i,1]$ is defined over the residue field
$\Q(\theta_i)$.

Let us
recall
that in the notation of~\cite[\S 2.2]{brick},
the
morphism 
\[(s,t;\b{x})
\mapsto
(s/t;[x_1t^{-d_1/2},x_2t^{-d_2/2},x_3t^{-d_3/2}])
\]
maps each $(s,t;\b{x})\in \mathbb{A}^5$
with 
$Q_{s,t}(\b{x})=0,\b{x} \neq \b{0}$
and
$(s,t)\neq \b{0}$
to
a point 
on the variety
$S_1\subset \mathbb{A}^1\times  \P^2$ 
given by
$Q_{u,1}(\b{x})=0$. 
This reveals that there exists an integer $d$
that makes the following
identity
valid, \beq{lamia}{
t^{d}Q_{s,t}(\b{x})=Q_{s/t,1}(x_1t^{-d_1/2},x_2t^{-d_2/2},x_3t^{-d_3/2}).}
\section{Construction of the detectors}
\lab{Aat}
Owing to
the assumptions of Theorem~\ref{thm:lower}
there exist $(s_0,t_0) \in \Zp^2$ and
$\b{y}\in \Zp^3$ such that 
$Q_{s_0,t_0}(\b{y})=0$ and $\Delta(s_0,t_0)\neq 0$.
The implicit function theorem for the $\R$-morphism 
$\pi:X(\R)\to \P^1(\R)$ provides a set
$C\subset \R^2$ 
of non-empty interior that 
contains both
$\b{0},(s_0,t_0)$
and
that is closed under scalar multiplication, such that 
whenever $(s,t) \in C$ then $Q_{s,t}=0$ is smooth and has a real point.
Observe that given any finite set of lines of $\R^2$
all of which pass through
the origin but none through $(s_0,t_0)$,
there exists a box inside $C$ that contains
 $(s_0,t_0)$
and not intersecting 
any of the lines.
This shows that there exists a box of non-empty interior $\c{R}\subset C$ 
such that if
$(s,t) \in \R^2\cap B\c{R}$ and $B\in \R_{>1}$ then 
$
\Delta(s,t)\neq 0,
Q_{s,t}(\R)\neq \emptyset
$
and
\beq{labyr}{
\Delta_i(s,t)
\Delta_i(s_0,t_0)>0
\ \text{if} \ 
\Delta_i
\ \text{is linear}
.}
In a similar fashion, the
non-archimedean analogue of
the implicit function theorem 
can be used
for each $p\leq D_0$
to
provide large integers $\kappa_p$
such that 
\beq{wellser}{
\text{if}
\
(s,t)
\equiv 
(s_0,t_0)
\md{p^{\kappa_p}}
\ \text{then} \ 
Q_{s,t}(\Q_p)\neq \emptyset
.}

We define the symbol
$\c{H}_p(s,t)$
for
all primes $p$ 
and any $(s,t) \in \Zp^2$ with $\Delta(s,t)\neq 0$ to be
$1$ or $-1$ according to if 
the conic $Q_{s,t}(\b{x})=0$ is soluble over $\Q_p$ or not.
Therefore
letting
for any $B>1$,
\[
\c{B}:=
\Bigg\{\left(s,t\right) \in \Zp^2
\cap B\mathcal{R}:
(s,t)\equiv (s_0,t_0)\md{\prod_{p\leq D_0}p^{\kappa_p}}
\Bigg\},\]
allows us to deduce via the Hasse principle
that
\beq{for any}{
N(\pi,B)
\gg
\sum_{\substack{(s,t) \in \c{B}}}
\prod_{\substack{p|\Delta(s,t)\\p>D_0}}
\l(\frac{1+\c{H}_p(s,t)}{2}\r),
}
given that
$Q_{s,t}(\b{x})=0$
has a $\Q_p$-point
whenever
$p\nmid \Delta(s,t)$.

It is important to highlight that~\eqref{for any}
gives us the freedom
to choose $D_0$ arbitrarily large.
The idea of restricting to congruences modulo the integer
\[W=\prod_{p\leq D_0}p^{n_p},\]
where $n_p\geq \kappa_p$,
comes from sieve theory. We shall
often enlarge the size of 
$D_0$ and $n_p$ with no further mention,
this will allow us to deal with complications arising from the effect of small primes.
We shall always choose $W$ to depend solely on the coefficients of the 
forms $f_{ij}(s,t)$ in~\eqref{sinister} and in particular $W$ will   
be independent of the counting parameter $B$.

Owing to the vanishing of
$\Delta(\theta_i,1)$,
the
form $Q_{\theta_i,1}(\b{x})$
is singular and thus
$\nabla Q_{\theta_i,1}(\b{v})=0$
for some
non-zero vector
$\b{v}\in \Z[\theta_i]^3$.
Fixing
some
$1\le \ell_i \le 3$
satisfying $v_{\ell_i}\neq 0$ allows us to consider the discriminant of the binary quadratic form
$Q_{s,t}(x_{\ell_i}=0)$. This discriminant, henceforth called $F_i(s,t)$,
is a binary form in $\Z[s,t]$
and has 
even degree owing to the assumption that 
the matrix of the quadratic form
$Q_{s,t}(\b{x})$ has principal minors of even degree. 
It is important to note that the construction of $F_i$ is not unique
since it depends on the choice of ${\ell_i}$.
We can however make a choice of ${\ell_i}$ following
the 
algorithm above,
and fix this choice once and for all
for the rest of this paper.

For any
$N \in \N$ and any field $K$ of characteristic not $2$,
we let $Q \in K[x_1,\ldots,x_N]$ be any quadratic form.
Then for all $\b{x} \in K^N$, we have
$
4Q(\b{x})=
2Q(\b{x})+\nabla Q(\b{x}) \cdot \b{x} 
$,
where $\cdot$ denotes the inner product in $K^N$.
Therefore we obtain that 
\beq{eq:nab}
{
\text{if}
\
\
\nabla Q(\b{x})=0,
\
\
\text{then}
\
\
Q(\b{x})=0
.} 

Let us next prove that the fact that the singular fibre above
$[\theta_i,1]$ is not a double line implies $\mathrm{Res}(\Delta_i,F_i)\neq 0$.
Indeed, assuming with no loss of generality 
that $\ell_i=1$
and considering
the transformation $\b{y}=T_i^{-1}\b{x}$, where
\[
 T_i:=
\left( \begin{array}{ccc}
v_1 & 0 & 0 \\
v_2 & v_1 & 0 \\
v_3 & 0 & v_1 \end{array} 
\right),
\]
shows that 
\begin{align*}
Q_{\theta_{i},1}(\b{x})
&= Q_{\theta_i,1}(v_1(0,y_2,y_3)+y_1 \b{v})
\\
&=  v_1^2 Q_{\theta_i,1}(0,y_2,y_3)
+y_1^2 Q_{\theta_i,1}(\b{v})
+y_1 v_1 (\nabla Q_{\theta_i,1}(\b{v}) \cdot (0,y_2,y_3))
\\
&=v_1^2
Q_{\theta_i,1}(0,y_2,y_3).
\end{align*}
We have made use of 
$Q_{\theta_i,1}(\b{v})=0$,
which is implied by 
$\nabla Q_{\theta_i,1}(\b{v})=0$
and of~\eqref{eq:nab}
for 
\[
N=3,
K=\Q(\theta_i),
Q=Q_{\theta_i,1},
\b{x}=\b{v}
.\]
The calculation above reveals that 
there exists an invertible
transformation
defined over $\Q(\theta_i)$
that transforms the singular fibre above
$[\theta_i,1]$ 
into the binary quadratic form
\[Q_{\theta_i,1}(0,y_2,y_3)=0.\]
Since our conic bundle is non-singular,
each singular fibre is not a double line 
and therefore the discriminant of the said quadratic form, $F_i(\theta_i,1)$,
must be non-zero. However, if
$\mathrm{Res}(\Delta_i,F_i)=0$
then the irreducibility of $\Delta_i$ would yield
that $\Delta_i|F_i$ in $\Z[s,t]$
and therefore $F_i(\theta_i,1)=0$,
a contradiction.
Let us record here the obvious observation that
since
the singular
fibre above $[\theta_i,1]$
is
non-split, we have that  
\beq{splitt}{F_i(\theta_i,1) \notin \Q(\theta_i)^2
\ \
\text{for all}
\ \
i=1,\ldots,n.}
\begin{proposition}
[Detectors]
\label{soll}
There exists a large positive
constant $D_0$,
that depends at most on the coefficients of the equation 
$Q_{s,t}=0$
which defines the conic bundle,
such that,
whenever
$(s,t) \in \Zp^2$ with $\Delta(s,t)\neq 0$
and $p>D_0$ is a prime that divides $\Delta(s,t)$,
then there exists a unique 
$i \in \{1,\ldots,n\}$
satisfying
$p|\Delta_i(s,t)$
%,$p\nmid g_i(s,t)$
and  
\[
\c{H}_p(s,t)=
\l(\frac{F_i(s,t)}{p}\r)^{\!\nu_p(\Delta_i(s,t))}
.\]
\end{proposition}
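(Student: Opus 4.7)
The proof splits into verifying uniqueness of the index $i$ and computing the Hilbert symbol via a $p$-adic change of variables.

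I would first choose $D_0$ to exceed the absolute values of the pairwise resultants $\mathrm{Res}(\Delta_i,\Delta_j)$ for $i\neq j$, the resultants $\mathrm{Res}(\Delta_i,F_i)$, the discriminants $\mathrm{disc}_x\Delta_i(x,1)$, the leading $s$-coefficient of $\Delta(s,t)$, and the $\Q$-norms of finitely many specific nonzero elements of $\Q(\theta_i)$ (notably the $v_j$ defining $T_i$ and the scalar $\partial_u Q_{u,1}(\b{v})|_{u=\theta_i}$ introduced below). Uniqueness of $i$ is then immediate: if $p>D_0$ divided both $\Delta_i(s,t)$ and $\Delta_j(s,t)$ with $i\neq j$ and $(s,t)\in\Zp^2$, then $p$ would divide the nonzero integer $\mathrm{Res}(\Delta_i,\Delta_j)$. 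The same choice forces $p\nmid t$ (otherwise $p$ would divide the leading $s$-coefficient of $\Delta$) and $p\nmid F_i(s,t)$ (otherwise $p$ would divide $\mathrm{Res}(\Delta_i,F_i)$), so the Jacobi symbol in the statement lies in $\{\pm 1\}$.

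Set $u=s/t\in \Z_p$ and $k=\nu_p(\Delta_i(s,t))$. The identity~\eqref{lamia} supplies a $\Q_p$-isomorphism between the conics $Q_{s,t}(\b{x})=0$ and $Q_{u,1}(\b{x})=0$, so it suffices to compute the $\Q_p$-isotropy of the latter. Because $u\bmod p$ is a simple root of $\Delta_i(x,1)\bmod p$, Hensel's lemma yields a root $\alpha\in\Z_p$ of $\Delta_i(x,1)$ with $\nu_p(u-\alpha)=k$. Embedding $\Z[\theta_i]\hookrightarrow \Z_p$ via $\theta_i\mapsto\alpha$, I regard the matrix $T_i$ from~\S\ref{Aat} as having $\Z_p$-entries with $v_{\ell_i}\in \Z_p^\times$.

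Taking $\ell_i=1$ without loss of generality and applying $\b{x}=T_i\b{y}$, a Taylor expansion in $u-\alpha$ yields
\[Q_{u,1}(T_i\b{y})=v_1^2\,Q_{\alpha,1}(0,y_2,y_3)+(u-\alpha)R(\b{y})+(u-\alpha)^2 S(\b{y})\]
with $R,S\in \Z_p[\b{y}]$ ternary quadratic. The leading summand is the binary form isolated in~\S\ref{Aat}, whose discriminant $F_i(\alpha,1)$ is a $p$-adic unit congruent modulo $p$ to $F_i(s,t)/t^{\deg F_i}$. The coefficient of $y_1^2$ in $R$ equals $\partial_u Q_{u,1}(\b{v})|_{u=\alpha}$; smoothness of $X$ at the nodal point $([\alpha,1],[\b{v}])$ of the fibre above $[\alpha,1]$ forces this derivative to be nonzero in $\Q(\theta_i)$, hence a $\Z_p^\times$-element for $p>D_0$. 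Translating the smoothness of $X$ into this uniform arithmetic non-vanishing is the most delicate step.

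Completing the square first in $y_2,y_3$ (using the unit coefficients inherited from $v_1^2 Q_{\alpha,1}(0,\cdot,\cdot)$) and then in $y_1$ (using the unit coefficient of $y_1^2$ just established) diagonalises $Q_{u,1}$ over $\Z_p$ as $A_1 Y_1^2+A_2 Y_2^2+A_3 Y_3^2$ with $\nu_p(A_1)=k$, $A_2,A_3\in\Z_p^\times$ and $A_2A_3\equiv -F_i(\alpha,1)/4 \pmod{(\Z_p^\times)^2}$. The standard isotropy criterion for diagonal ternary forms over $\Q_p$ ($p$ odd), namely $\c{H}_p(s,t)=\left(\frac{-A_2A_3}{p}\right)^{\nu_p(A_1)}$, combined with $1/4$ being a square in $\Z_p^\times$ and $\deg F_i$ being even, gives $\left(\frac{-A_2A_3}{p}\right)=\left(\frac{F_i(\alpha,1)}{p}\right)=\left(\frac{F_i(s,t)}{p}\right)$, and raising to the power $k=\nu_p(\Delta_i(s,t))$ delivers the claim.
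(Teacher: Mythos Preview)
Your proof is correct and follows the paper's overall strategy: transform $Q_{u,1}$ via the matrix $T_i$ to isolate the rank-$2$ piece coming from the singular fibre, diagonalise, and compute the Hilbert symbol. The one notable difference lies in how the $p$-adic valuation of the ``third'' diagonal coefficient is pinned down. You Hensel-lift the root to $\alpha\in\Z_p$, Taylor-expand $Q_{u,1}(T_i\b{y})$ in $u-\alpha$, and invoke smoothness of the total space $X$ at the node of the singular fibre to deduce that the $y_1^2$-coefficient has valuation exactly $k=\nu_p(\Delta_i(s,t))$. The paper instead works over $\F_p$ via the ring map $\psi\colon\Z[\theta_i]\to\F_p$ sending $\theta_i\mapsto s/t\bmod p$, applied directly to the identity $Q_{\theta_i,1}(T_i\b{y})=v_{\ell_i}^2\,Q_{\theta_i,1}(\b{y}|_{y_{\ell_i}=0})$; lifting the resulting $\F_p$-transformation to $\mathrm{GL}_3(\Z_p)$ yields a form $c(z_1^2-F_i(\Xi_p,1)z_2^2)+c'z_3^2$ with $c'\in p\Z_p$, and then $\nu_p(c')=\nu_p(\Delta_i(s,t))$ is read off from the invariance of the discriminant $\Delta(\Xi_p,1)$ under $\mathrm{GL}_3(\Z_p)$-equivalence. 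This algebraic shortcut replaces your geometric appeal to smoothness at the node by a one-line observation about discriminants, so that the non-singularity of $X$ is used only through its weaker consequence that no singular fibre is a double line.
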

\begin{proof}
Recalling~\eqref{factori}
and taking $D_0$ large enough so that 
$W$ includes all prime divisors of the resultant of $\Delta_i$ and $\Delta_j$
for all $i\neq j$,
immediately yields that there exists a unique index
$i \in \{1,\ldots,n\}$ such that
$p|\Delta_i(s,t)$.
The fact that 
$t\nmid \Delta(s,t)$ in $\Z[s,t]$
implies that 
$\Delta_i(1,0)\neq 0$.
Enlarging $D_0$
allows us to assume that 
$p\nmid \Delta_i(1,0)$
and therefore
the coprimality of $s$ and $t$ reveals that $p\nmid t$.
We therefore obtain that 
$\Xi_p:=s/t\md{p}$
is
defined
and, in light of~\eqref{lamia},
$Q_{s,t}=0$ has a $\Q_p$-point 
if and only if $Q_{\Xi_p,1}=0$ does.
Noting
that 
$\Delta_i(\Xi_p,1)\equiv 0 \md{p}$,
allows us to confirm that 
the map
$\psi:\Z[x]/(\Delta_i(x,1))  \to \Z/p\Z$,
given by
\[
h(x) +(\Delta_i(x,1)) \mapsto h(\Xi_p)\md{p}
,\]
is well-defined and a ring homomorphism.

Taking a larger value for $D_0$ allows us to obtain the coprimality of $p$ and $\psi(v_{\ell_i}) 2\mathrm{Res}(\Delta_i,F_i)$ 
and hence the matrix $\psi(T_i) \in (\Z/p\Z)^{3\times 3}$
is invertible. Therefore applying $\psi$
to the equality
$Q_{\theta_{i},1}(\b{x})=v_{\ell_i}^2
Q_{\theta_i,1}(0,y_2,y_3)
$
shows that 
$Q_{\Xi_{p},1}$
can be transformed over $\Z/p\Z$
into the 
binary quadratic form
$Q_{\Xi_p,1}(0,y_2,y_3).$
Due to 
$p\nmid
\mathrm{Res}(\Delta_i,F_i)$,
its discriminant
$F_i(\Xi_p,1)$,
is not divisible by $p$
and we can therefore 
diagonalise it into
$c(z_1^2-F_i(\Xi_p,1)z_2^2)$
for some $c \in \Z/p\Z$ with $ p\nmid c$.
We have therefore shown that
$Q_{\Xi_p,1}=0$
is equivalent 
over $\Q_p$
to 
\[c(z_1^2-F_i(\Xi_p,1)z_2^2)+c'z_3^2=0\]
for some
$c' \in p\Z_p$.
Noting that the $p$-adic valuation of $\Delta(\Xi_p,1)$
is $\nu_p(\Delta_i(s,t))$, we deduce that 
$\nu_p(c')=\nu_p(\Delta_i(s,t))$
and therefore
a standard computation reveals that the $p$-adic Hilbert symbol of 
$z_1^2=F_i(\Xi_p,1)z_2^2-\frac{c'}{c}z_3^2$
equals
\[
\l(\frac{F_i(\Xi_p,1)}{p}\r)^{\nu_p(\Delta_i(s,t))}
.\] 
Bringing together the observations
$s\equiv \Xi_p t \md{p}$
and $2|\deg(F_i)$
finishes the proof of our proposition.
\end{proof}
Introducing for each $(s,t) \in \c{B}$
and $i \in \{1,\ldots,n\}$
the entities
\[
r_i^*(s,t):=
\prod_{\substack{p|\Delta_i(s,t)\\p>D_0}}\l(1+\l(\frac{F_i(s,t)}{p}\r)\r)
\]
and
combining Proposition~\ref{soll}
with~\eqref{for any}
yields that 
$N(\pi,B)$ is
\[\gg \sum_{(s,t) \in \c{B}}
\prod_{i=1}^{n}\prod_{\substack{p|\Delta_i(s,t)\\p>D_0}}
\frac{1}{2}\l(1+\l(\frac{F_i(s,t)}{p}\r)^{\nu_p(\Delta_i(s,t))}\r)
\geq 
\sum_{(s,t) \in \c{B}}
\prod_{i=1}^{n}
r_i^*(s,t)
2^{-\#\{p|\Delta_i(s,t):p>D_0\}}
.\]
Define for any
$k \in \Z-\{0\}$
and
$z\in \R\cap (D_0,\infty)$
the function
\[\omega(k;z):=\#\{p|k:D_0<p\leq z\}.\]
Furthermore,
whenever
$z\in \R\cap (D_0,B)$
we define
\[\varpi:=\l(\frac{\log z}{\log B}\r)100n(n+1),
\]
which will eventually be
a small positive constant.
Let us recall that for the integer
$j=\#\{p|k:p>D_0\}$
we have
$j\leq (\log k)/(\log D_0)$.
The
inequality 
$|\Delta_i(s,t)|\ll B^{\deg(\Delta_i)}$
therefore reveals that  
\begin{equation}
\lab{15}
N(\pi,B)
\gg
\sum_{(s,t) \in \c{B}}
\
\prod_{i=1}^n
\frac{r_i^*(s,t)}{2^{\omega(\Delta_i(s,t);z)}}
.\end{equation}
Before the end of this section 
we record a result 
which will facilitate the
application of the hyperbola trick in~\S\ref{stephan d}.
For any integer 
$n \in \Z-\{0\}$
define
\[ 
n^\dagger:=\prod_{\substack{p|n\\p>D_0}}p^{\nu_p(n)}
.\]
\begin{proposition} 
\lab{let it be}
For all
$(s,t) \in \c{B}$
and
$i=1,\ldots,n$
we have
\[
\l(\frac{F_i(s,t)}{\Delta_i(s,t)^\dagger}\r)=1
.\]
\end{proposition}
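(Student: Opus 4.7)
The plan is to recast the Jacobi symbol in the statement as a product of Hilbert symbols in the number field $K_i:=\Q(\theta_i)$ attached to the pair $\alpha:=s-\theta_i t\in K_i^\times$ and $\beta_i:=F_i(\theta_i,1)\in K_i^\times$, to apply Hilbert's product formula in $K_i$, and to reduce the claim to the base point $(s_0,t_0)$ where it is immediate.

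I first record the evaluation at $(s_0,t_0)$. The existence of $\b{y}\in\Zp^3$ with $Q_{s_0,t_0}(\b{y})=0$ shows that the smooth conic $Q_{s_0,t_0}$ has a $\Q$-point, hence is isotropic over every completion of $\Q$, so $\c{H}_p(s_0,t_0)=1$ for every prime $p$. Proposition~\ref{soll} then yields $\l(\frac{F_i(s_0,t_0)}{p}\r)^{\nu_p(\Delta_i(s_0,t_0))}=1$ for every $p>D_0$ dividing $\Delta_i(s_0,t_0)$, and multiplying over such $p$ gives $\l(\frac{F_i(s_0,t_0)}{\Delta_i(s_0,t_0)^\dagger}\r)=1$.

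Next I show that the Jacobi symbol is constant on $\c{B}$. By~\eqref{splitt}, $\beta_i\notin(K_i^\times)^2$. Enlarging $D_0$ to exceed every rational prime below $\mathrm{disc}(K_i)$, below the support of $\beta_i$ in $\OO_{K_i}$, or dividing $\mathrm{Res}(\Delta_i,F_i)\cdot\Delta_i(1,0)$, one checks that for every $p>D_0$ with $p\mid\Delta_i(s,t)$ the prime $p$ is unramified in $K_i$, the residue $s/t\md{p}$ is a simple root of $\Delta_i(x,1)\md{p}$, and the unique prime $\p$ of $K_i$ above $p$ with $\nu_\p(\alpha)>0$ is of residue degree one, has $\nu_\p(\alpha)=\nu_p(\Delta_i(s,t))$, and satisfies $\p\nmid\beta_i$. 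The standard tame Hilbert symbol formula combined with the residue field isomorphism $\OO_{K_i}/\p\cong\F_p$ sending $\theta_i\mapsto s/t\md{p}$, together with the fact that $\deg F_i$ is even and $p\nmid t$, gives
\[(\alpha,\beta_i)_\p=\l(\frac{F_i(s,t)}{p}\r)^{\nu_p(\Delta_i(s,t))},\]
while all other primes of $K_i$ above $p>D_0$ contribute trivially. Hence
\[\l(\frac{F_i(s,t)}{\Delta_i(s,t)^\dagger}\r)=\prod_{\substack{p>D_0\\ p\mid\Delta_i(s,t)}}(\alpha,\beta_i)_\p.\]
The product formula $\prod_v(\alpha,\beta_i)_v=1$ in $K_i$ then equates this with the reciprocal of the remaining local symbols, namely those at the infinite places of $K_i$ and at the primes of $K_i$ above $p\le D_0$. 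Each such symbol depends only on $\alpha$ modulo $(K_{i,v}^\times)^2$: at a real embedding $\sigma$ it is the sign of $s-\sigma(\theta_i)t$, which is rendered constant on $B\c{R}$ by shrinking $\c{R}$ to avoid the finitely many lines $s=\sigma(\theta_i)t$ through the origin; at a finite place above $p\le D_0$, Hensel's lemma shows $\alpha/\alpha_0\in(K_{i,v}^\times)^2$ whenever $(s,t)\equiv(s_0,t_0)$ modulo a sufficiently high power of $p$, enforced by enlarging the exponent $n_p$ in the definition of $W$. Hence the Jacobi symbol is constant on $\c{B}$, and by the base-point computation this constant value is $1$.

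The main obstacle is the identification in the third paragraph of the Jacobi symbol with a product of $K_i$-Hilbert symbols; this requires controlling the prime decomposition of each $p>D_0$ occurring in $\Delta_i(s,t)$, achieved by absorbing the finite list of exceptional primes into $D_0$ and carrying out the tame-symbol computation in the degree-one residue field.
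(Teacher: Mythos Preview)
Your argument is correct and takes a genuinely different route from the paper's. The paper first applies Hilbert reciprocity over $\Q$ to the conic $Q_{s,t}$ itself, obtaining $\prod_{i=1}^n \big(\tfrac{F_i(s,t)}{\Delta_i(s,t)^\dagger}\big)=1$, and then exploits the rank hypothesis $r(\pi)\leq 3$ to reduce to the case where $\Delta_i$ is linear; that case is settled by an explicit quadratic reciprocity computation after writing $\Delta_i(s,t)^\dagger\equiv 1\pmod{4c_i}$. You bypass the rank restriction entirely by working one index $i$ at a time over the field $K_i=\Q(\theta_i)$: the identity $\big(\tfrac{F_i(s,t)}{\Delta_i(s,t)^\dagger}\big)=\prod_{\mathfrak p\mid p>D_0}(\alpha,\beta_i)_{\mathfrak p}$ together with Hilbert's product formula in $K_i$ converts the question into the constancy of the archimedean and small-prime symbols, which you handle via the construction of $\c{R}$ and the congruence modulo $W$, and you anchor the constant value using the base point $(s_0,t_0)$. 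The trade-off is that your proof is valid for conic bundles of arbitrary rank (so in particular does not need the case split of the paper), at the cost of invoking Hilbert symbols in a number field rather than only rational quadratic reciprocity. Two small points worth tightening: you should also take $D_0$ large enough to avoid the primes dividing $\mathrm{disc}(\Delta_i(x,1))$ (not merely $\mathrm{disc}(K_i)$) so that Dedekind's factorisation via $\Z[\theta_i]$ applies and the root $s/t\bmod p$ is simple; and the real Hilbert symbol $(\alpha,\beta_i)_\sigma$ is not literally ``the sign of $s-\sigma(\theta_i)t$'' but is determined by it together with the fixed sign of $\sigma(\beta_i)$, which does not affect your constancy claim.
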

\begin{proof}
Notice that for all $(s,t)$ under consideration
the conic
$Q_{s,t}(\b{x})=0$ is smooth
and is soluble over $\R$ and $\Q_p$
for all primes $p$ satisfying
$p\leq D_0$ or $p\nmid \Delta(s,t)$.
By Hilbert's reciprocity formula we obtain
\beq{hilbe0}{\prod_{\substack{p|\Delta(s,t)\\p>D_0}}\c{H}_p(s,t)=1}
and hence Proposition~\ref{soll} yields that 
\[\prod_{i=1}^n
\l(\frac{F_i(s,t)}{\Delta_i(s,t)^\dagger}\r)=1.\]
Noting that the rank $r(\pi)=\sum_{i=1}^n\deg(\Delta_i)$
is at most $3$
shows that the vector $(\deg(\Delta_i))_{i=1}^n$
can only be one of the following, $(3), (2,1), (1,1,1), (2),(1,1)$ or $(1)$.
In the first case,
the proof of our claim
is furnished immediately by~\eqref{hilbe0}, 
while in the two remaining cases,
~\eqref{hilbe0}
shows that 
it suffices to prove
$
\l(\frac{F_i(s,t)}{\Delta_i(s,t)^\dagger}\r)=1$
for each $i$ with $\Delta_i$ linear.

Indeed, let $\Delta_i(s,t)=a_is-b_it$ be such a form, where $a_i,b_i$ are integers with 
$(a_i,b_i)\neq \b{0}$.
Letting
$c_i:=F_i(b_i,a_i)$,
we observe that
there exists a polynomial $g_i \in \Z[x,y]$
such that
\[
a_i^{\deg(F_i)}
F_i(x,y)
=y^{\deg(F_i)}
c_i
+\Delta_i(x,y)
g_i(x,y)
,\]
as can be shown by a Taylor expansion for example.
Specialising to $(x,y)=(s,t)$ 
and using that
$2|\deg(F_i)$,
we obtain the equality
\[
\l(\frac{F_i(s,t)}{\Delta_i(s,t)^\dagger}\r)=
\l(\frac{c_i}{\Delta_i(s,t)^\dagger}\r).\]
To continue our argument, we augment $W$
by assuming that 
$4c_i\Delta_i(s_0,t_0) | W$.
Note that for all $(s,t)$ in our lemma there exist $(s',t')\in \Z^2$
such that
$(s,t)=(s_0,t_0)+W(s',t')$
and hence we have that  
\[\Delta_i(s,t)=\Delta_i(s_0,t_0)\l(1+\frac{W}{\Delta_i(s_0,t_0)}\Delta_i(s',t')\r).\]
The integer in the parenthesis is positive
due to~\eqref{labyr}. Noting that $\Delta_i(s_0,t_0)|W$
shows that
$\Delta_i(s_0,t_0)^\dagger=1$, and therefore
\[\Delta_i(s,t)^\dagger=
1+\frac{W}{\Delta_i(s_0,t_0)}\Delta_i(s',t')
\equiv 1 \md{4c_i}
.\]
Now
quadratic reciprocity reveals
the validity of
\[
\l(\frac{c_i}{\Delta_i(s,t)^\dagger}\r)=
\l(\frac{\Delta_i(s,t)^\dagger}{c_i}\r),\]
which equals $(\frac{1}{c_i})=1$,
and therefore
our proof is hereby 
concluded.
\end{proof}

We shall now record an explicit formula for $N(\pi,B)$ which is valid for non-singular
conic bundles
of any rank.
This formula is not used in the present paper but can be helpful in future work in the area.
Let us define for any index
$i=1,\ldots,n$ and all
$(s,t) \in \Zp^2$ satisfying
$\Delta(s,t)\neq 0$ the arithmetic
functions 
\[
\widetilde{r}_i(s,t):=  
\prod_{\substack{p|\Delta_i(s,t)\\p>D_0}}
\l(1+\c{H}_p(s,t)\r)
,\]
which, by Lemma~\ref{soll}, fulfil
\[
\widetilde{r}_i(s,t)
=
\hspace{-0,3cm}
\sum_{\substack{d_i|\Delta_i(s,t)^\flat \\
\gcd(d_i,\Delta_i(s,t)/d_i)=1}}
\hspace{-0,3cm}
\l(\frac{F_i(s,t)}{d_i}
\r)
.\]
Introducing for any
$D>0$
the set
\[
\mathrm{Sol}(D):=
\{
(s,t) \in \Z^2:
Q_{s,t}=0
\ \text{isotropic over} 
\
\R
\
\text{and}
\
\Q_p
\
\text{for all}
\
p\leq D
\}
,\]
allows us to
deduce  
via the 
Hasse principle that
\beq{eq:explicit formula}{
N(\pi,B)=\
\frac{1}{2}
\Osum_{\substack{|s|,|t| \le B\\ \Delta(s,t) \neq 0 \\
(s,t) \in \mathrm{Sol}(D_0)
}}
\prod_{i=1}^n
\frac{\widetilde{r}_i(s,t)}{2^{\#\{p|\Delta_i(s,t)^\flat\}}}
+
\sum_{\substack{1\le i \le n 
\\
\deg(\Delta_i)=1}}
\hspace{-0,3cm}
1 
,}
since the only degenerate fibres contributing towards $N(\pi,B)$ are the ones defined over $\Q$.

We next
sketch how one could deal with the conditions on the summation over $s,t$
imposed by the set $\mathrm{Sol}(D_0)$. 
We can see, for example, by diagonalising $Q_{s,t}=0$ over $\R$ and using the Hilbert symbol over $\R$,
that there is a finite union of open, disjoint, and non-empty
sets $\c{R}_j \subset \R^2$ such that $Q_{s,t}=0$ has a real point if and only if $(s,t) \in \bigcup_j \c{R}_j$.
To deal with $p$-adic solubility for the small primes $p\leq D_0$, we can observe that for each prime $p\geq 2$,
whenever $p^\alpha\| \Delta(\sigma,\tau)$ and $(s,t) \equiv (\sigma,\tau) \md{p^{\alpha+1}}$, then
$Q_{s,t}=0$ has a $\Q_p$-point if and only if 
$Q_{\sigma,\tau}=0$ has, and therefore one can then partition
the sum over $s,t$ in sets defined $\md{p^{\alpha+1}}$ for all $p\leq D_0$
and $\alpha \in \Z_{\geq 0}$.
The periodic property we mentioned can be inferred by observing that 
a smooth conic $Q=0$ has a $p$-adic point if and only if its
$p$-adic Hardy-Littlewood density is positive and that
the limit~\cite[Eq.(1.2)]{conconcon}
defining the $p$-adic density stabilises as soon as $n\geq \nu_p(\Delta_Q)+1$,
in the notation of~\cite{conconcon}.

\section{Introducing the Hooley neutralisers}
\lab{1}
\subsection{Construction of the neutraliser}
\lab{1a} 
It is tempting to use the formula
\[\frac{1}{2^{\omega(k)}}=\sum_{d|k}
\frac{\mu(d)}{\tau(d)}
\] 
to turn the sum~\eqref{15}
into a sum of the form
\[\sum_{d_i\ll B^{\deg(\Delta_i)}}\frac{\mu(d_1)\cdots \mu(d_n)}{\tau(d_1)\cdots \tau(d_n)}
\sum_{\substack{(s,t) \in \c{B} \\ d_i|\Delta(s,t)}} r_1^*(s,t)\cdots r_n^*(s,t).
\]
Alas,
this action will place us in a quandary
owing to the magnitude of the range of summation of each
variable $d=d_1,\ldots,d_n$.
Hooley's neutraliser trick consists of employing
sieve functions 
$\lambda_d^\pm$
that
imitate the M\"{o}bius function
$\mu(d)$, yet at the same time, having 
a small support.

With an eye to future applications we supply a general version of his artifice in the next proposition.
Before proceeding, we recall the sole
required
property of the sequence 
$(\lambda^{\pm}_d)_{d \in \N}$, namely
\[
\lambda^\pm_1=1
\ \ \text{and} \ \
\lambda^-_d\leq 0 \leq 
\lambda^+_d
\ \ \text{for all} \ d \neq 1.\]
\begin{proposition} 
\lab{12++} 
Assume that 
$\c{P}_1,\ldots, \c{P}_n$
are sets of primes
and
that we are given 
functions $f_1,\ldots,f_n:\N\to \R$  
satisfying the following properties,
\newline
$(1)$
$f_i$ multiplicative,
\newline$(2)$
$f_i(\N)\subset \R_{>0}$, 
\newline$(3)$
$m\in \N, p \ \text{prime} \Rightarrow f_i(p^m)=f_i(p)$, 
\newline$(4)$
$p \in \c{P}_i \Rightarrow f_i(p)<1$, 
\newline$(5)$
$p \notin \c{P}_i \Rightarrow f_i(p)=1$.
\newline
Define the multiplicative functions 
$
\widehat{f_i} 
:\N
\to \R$ by 
$\widehat{f_i}(k):=
\prod_{p|k}
\l(1-f_i(p)\r)$.
Then for all
$\b{k} \in \N^n$ with
$\gcd(k_i,k_j)=1$ for all
$i\neq j$
we have
\[
\sum_{\substack{ 
d_i|k_i  \\ 
p|d_i \Rightarrow p \in \c{P}_i \\ 
d_i \ \text{squarefree}}}
\hspace{-0,5cm}
\lambda_{d_1\cdots d_n}^-
\prod_{i=1}^n
\widehat{f_i}(d_i)
\le
\prod_{i=1}^n
f_i(k_i)
\le
\hspace{-0,5cm}
\sum_{\substack{ 
d_i|k_i  \\ 
p|d_i \Rightarrow p \in \c{P}_i \\ 
d_i \ \text{squarefree}}}
\hspace{-0,5cm}
\lambda_{d_1\cdots d_n}^+
\prod_{i=1}^n
\widehat{f_i}(d_i)
.\]
\end{proposition}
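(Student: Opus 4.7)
The plan is to merge the $n$ individual sums into a single sum over the squarefree divisors of $K := k_1 \cdots k_n$, thereby reducing the claimed sandwich inequality to a one-dimensional sieve-theoretic assertion that can be addressed by M\"obius inversion.

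First I would reduce to the case in which each $k_i$ is squarefree with every prime factor lying in $\c{P}_i$; properties $(3)$ and $(5)$ show that $f_i(k_i)$, $\widehat{f_i}(d_i)$, and the summation conditions all depend on $k_i$ only through its $\c{P}_i$-radical, so this reduction is free. Using pairwise coprimality of the $k_i$, every squarefree $D\mid K$ factorises uniquely as $D = d_1\cdots d_n$ with $d_i\mid k_i$, and each prime $p\mid K$ lies in exactly one $\c{P}_{i(p)}$; setting $c_p := 1-f_{i(p)}(p) \in (0,1)$, a short computation yields $\prod_{i}\widehat{f_i}(d_i) = \prod_{p\mid D}c_p$ and $\prod_{i} f_i(k_i) = \prod_{p\mid K}(1-c_p)$. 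The claim is then equivalent to the polynomial sandwich
\[
\sum_{D\mid K}\lambda_D^{-}\prod_{p\mid D}c_p \;\leq\; \prod_{p\mid K}(1-c_p) \;\leq\; \sum_{D\mid K}\lambda_D^{+}\prod_{p\mid D}c_p,
\]
where $D$ is restricted to squarefree divisors of $K$.

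The upper bound is immediate from the stated sign conditions: every summand on the right with $D>1$ is non-negative, so the right-hand side is bounded below by $\lambda_1^{+}=1 \geq \prod_{p\mid K}(1-c_p)$. The lower bound is the genuine difficulty, and here I would invoke the upper/lower Rosser--Iwaniec sieve property $\sum_{D\mid m}\lambda_D^{\pm} \gtrless \mathbf{1}_{m=1}$, which the sieve weights produced later in the paper enjoy and on which the proposition implicitly relies. Using the inclusion--exclusion identity $\prod(1-c_p)=\sum_{D\mid K}\mu(D)\prod_{p\mid D}c_p$, the desired inequality reduces to $\sum_{D\mid K}(\mu(D)-\lambda_D^{-})\prod_{p\mid D}c_p \geq 0$. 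After the change of variables $c_p = 1-z_p$ with $z_p \in (0,1)$ and regrouping monomials by their support $T \subseteq \{p : p\mid K\}$, the coefficient of $\prod_{p\in T}z_p$ collapses to $\mathbf{1}_{T=\emptyset}-\sum_{D\mid K_T}\lambda_D^{-}$ with $K_T := \prod_{p\in T}p$, which is non-negative by the sieve property; combined with the non-negativity of the monomials $\prod_{p\in T} z_p$, this yields the lower bound.

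The principal obstacle is indeed the lower bound: the sign condition $\lambda_D^{-}\leq 0$ alone is not sufficient, as the single-prime case $K=p$ already reveals, and one truly needs the full upper/lower bound sieve property of the Rosser--Iwaniec weights. Once that is accepted, the $n$-variable statement follows from the scalar one through the coprimality-driven decoupling above, the only novelty being the uniform handling of the $n$ distinct pairs $(\c{P}_i, f_i)$, achieved by pooling into a single sum over $D\mid K$ and observing that each prime of $K$ contributes to exactly one of the factors $\widehat{f_i}(d_i)$.
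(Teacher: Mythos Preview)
Your reduction steps (passing to the $\c{P}_i$-radical of each $k_i$, pooling into a single $D\mid K$ via pairwise coprimality) are correct and mirror the paper's set-up. Your upper-bound argument is also correct, and your remark that the bare sign condition $\lambda_D^-\leq 0$ is not enough for the lower bound is perceptive: the paper's own proof tacitly uses the full sieve inequality $\sum_{d\mid m}\lambda_d^-\leq[m=1]\leq\sum_{d\mid m}\lambda_d^+$ (the property~\eqref{646} of the Fundamental Lemma), even though the preamble to the proposition advertises only the sign condition.

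The genuine gap is in your polynomial manipulation for the lower bound. The claimed coefficient formula is simply wrong. Already for $K=p$ a single prime,
\[
\sum_{D\mid p}(\mu(D)-\lambda_D^-)\prod_{q\mid D}(1-z_q)
=(-1-\lambda_p^-)(1-z_p)
=-(1+\lambda_p^-)+(1+\lambda_p^-)z_p,
\]
so the constant term is $-(1+\lambda_p^-)$, not $0$ as your formula with $T=\emptyset$ predicts, and the coefficient of $z_p$ equals $1+\lambda_p^-=\sum_{D\mid p}\lambda_D^-$, which the sieve property forces to be $\leq 0$. Thus even the underlying strategy of showing all $z$-monomial coefficients nonnegative cannot succeed: some are negative. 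In general the coefficient of $\prod_{p\in T}z_p$ is $(-1)^{|T|}\sum_{K_T\mid D\mid K}(\mu(D)-\lambda_D^-)$, which has no uniform sign.

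The paper avoids this by a different decomposition. It introduces an auxiliary sum over $m\mid K$ with positive weights $\widehat{f}(m)/f(m)=\prod_{p\mid m}\frac{c_p}{1-c_p}$, applies the sieve inequality to $\delta(m)$ termwise, and then swaps the order of summation. After summing the inner multiplicative series this amounts to the identity
\[
\prod_{p\mid K}(1-c_p)-\sum_{D\mid K}\lambda_D^-\prod_{p\mid D}c_p
=\sum_{m\mid K}\Bigl([m=1]-\sum_{d\mid m}\lambda_d^-\Bigr)\prod_{p\mid m}c_p\prod_{p\mid (K/m)}(1-c_p),
\]
which is manifestly $\geq 0$ since every factor on the right is nonnegative. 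Replacing your monomial expansion by this identity (equivalently, by the paper's auxiliary-$m$ trick) repairs the argument completely.
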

\begin{proof} 
Let $n_i$ be squarefree integers composed entirely of primes in $\c{P}_i$
for all $i=1,\ldots,n$
and assume that they are coprime in pairs.
Then by $(1)$ and $(2)$ we get
\[1
=
\prod_{i=1}^n
\frac{\widehat{f_i}(1)}{f_i(1)}
=
\sum_{\substack{\b{m} \in \N^n\\ m_i|n_i }}
\l(\prod_{i=1}^n\frac{\widehat{f_i}(m_i)}{f_i(m_i)}\r)
\delta(m_1\cdots m_n),\]
where
$\delta$ is the characteristic function of $\{1\}$.
Noting that 
$\frac{\widehat{f_i}(m_i)}{f_i(m_i)}>0$
due to $(2)$ and $(4)$,
we deduce that 
\[
\sum_{\substack{\b{m} \in \N^n\\ m_i|n_i }}
\l(\prod_{i=1}^n\frac{\widehat{f_i}(m_i)}{f_i(m_i)}\r)
\l(\sum_{d|m_1\cdots m_n}
\lambda_{d}^-
\r)
\le 1 \le
\sum_{\substack{\b{m} \in \N^n\\ m_i|n_i }}
\l(\prod_{i=1}^n\frac{\widehat{f_i}(m_i)}{f_i(m_i)}\r)
\l(\sum_{d|m_1\cdots m_n}
\lambda_{d}^+
\r)
.\]
Each $d$ in the sum can be written uniquely as $d_1\cdots d_n$, where 
$d_i|m_i|n_i$,
and therefore
writing
$m_i=d_id_i^*$
allows us to
transform
the sums over $\b{m}$
into
\[
\sum_{\substack{\b{d} \in \N^n\\ d_i|n_i \\ \gcd(d_i,d_j)=1}}
\lambda_{d_1\cdots d_n}^\pm
\sum_{\substack{\b{m} \in \N^n\\ d_i|m_i \\ m_i|n_i }}
\l(\prod_{i=1}^n\frac{\widehat{f_i}(m_i)}{f_i(m_i)}\r)
=
\sum_{\substack{\b{d} \in \N^n\\ d_i|n_i  \\ \gcd(d_i,d_j)=1}}
\lambda_{d_1\cdots d_n}^\pm
\prod_{i=1}^n
\l(
\sum_{\substack{d_i^*\in \N\\ d_i^* |\frac{n_i}{d_i} }}
\frac{\widehat{f_i}(d_id_i^*)}{f_i(d_id_i^*)}
\r)
.\] 
Note that 
$m_i|n_i$ implies that  
$m_i$ is squarefree and hence
$\gcd(d_i,d_i^*)=1$.
By $(1)$
we get
\[
\sum_{\substack{d_i^*\in \N\\ d_i^* |\frac{n_i}{d_i} }}
\frac{\widehat{f_i}(d_id_i^*)}{f_i(d_id_i^*)}
=
\frac{\widehat{f_i}(d_i)}{f_i(d_i)}
\sum_{\substack{d_i^*\in \N\\ d_i^* |\frac{n_i}{d_i} }}
\frac{\widehat{f_i}(d_i^*)}{f_i(d_i^*)},
\]
and furthermore
an easy computation reveals that the sum over $d_i^*$ equals
$f_i(\frac{n_i}{d_i})^{-1}$ owing to the definition of $\widehat{f_i}$.
One also has that 
$\gcd(d_i,\frac{n_i}{d_i})=1$
since $n_i$ is squarefree, and hence
$(1)$ shows that 
$
f_i(n_i)=
f_i(d_i)
f_i\!\l(n_i/d_i\r).
$
We obtain that 
\[\frac{\widehat{f_i}(d_i)}{f_i(d_i)}
\sum_{\substack{d_i^*\in \N\\ d_i^* |\frac{n_i}{d_i} }}
\frac{\widehat{f_i}(d_i^*)}{f_i(d_i^*)}
=
\frac{\widehat{f_i}(d_i)}{f_i(n_i)} 
,\]
which proves the claim of our proposition in the case that
each $n_i$ is squarefree and composed only by primes in $\c{P}_i$.

We shall need
$(3)$ to prove our proposition in its full
generality. Given any positive integers
$k_1,\ldots, k_n$
as in the statement of our proposition,
define 
\[n_i:=\prod_{\substack{p | k_i \\ p \in \c{P}_i}}p.\]
Observe that 
by 
$(1),(3)$
and
$(5)$
we have
$
f_i(k_i)=
f_i(n_i)
$
and hence 
\[
\prod_{i=1}^n
f_i(k_i)=\prod_{i=1}^n
f_i(n_i)
\lesseqgtr
\sum_{d_i|n_i } 
\lambda_{d_1\cdots d_n}^\pm
\prod_{i=1}^n
\widehat{f_i}(d_i)
=
\hspace{-0,5cm}
\sum_{ \substack{
d_i|k_i  \\ 
p|d_i \Rightarrow p \in \c{P}_i \\ 
d_i \ \text{squarefree}}}
\hspace{-0,5cm}
\lambda_{d_1\cdots d_n}^\pm
\prod_{i=1}^n
\widehat{f_i}(d_i),
\]
which concludes our proof.
\end{proof}

\begin{lemma}
\lab{16+}
Assume that 
$n\in \N$,
$z\in \R_{>D_0}$
and let $P(z)=\prod_{D_0<p\leq z}p.$
Whenever 
$\b{k} \in (\Z-\{0\})^n$ with $\gcd(k_i,k_j)=1$ for all $i\neq j$,
we have 
\[
\prod_{i=1}^n
2^{-\omega(k_i;z)}
\ge
\sum_{\substack{ 
d_i|(k_i,P(z)) \\
\gcd(d_i,d_j)=1,  i\neq j }} 
\frac{\lambda_{d_1\cdots d_n}^-}
{\tau(d_1)\cdots\tau(d_n)}
.\]
\end{lemma}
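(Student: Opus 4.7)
The plan is to obtain Lemma~\ref{16+} as a direct specialization of Proposition~\ref{12++}. For every $i = 1, \ldots, n$, I would take $\c{P}_i$ to be the common set of primes $\{p : D_0 < p \le z\}$ and define an arithmetic function $f_i$ by setting $f_i(p^m) := 1/2$ for each $m \ge 1$ whenever $p \in \c{P}_i$, $f_i(p^m) := 1$ when $p \notin \c{P}_i$, and extending multiplicatively. The five requirements (1)--(5) of Proposition~\ref{12++} are then immediate from the definition.

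With this choice, the product $\prod_i f_i(k_i)$ reproduces the left-hand side of the inequality in Lemma~\ref{16+}, since
\[
f_i(k_i) = \prod_{\substack{p|k_i\\ D_0 < p \le z}} \frac{1}{2} = 2^{-\omega(k_i;z)}.
\]
Moreover, for any squarefree $d$ whose prime factors all lie in $\c{P}_i$, the definition of $\widehat{f_i}$ gives $\widehat{f_i}(d) = \prod_{p|d}(1 - 1/2) = 2^{-\omega(d)} = 1/\tau(d)$, matching precisely the weights appearing in the statement of Lemma~\ref{16+}.

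To conclude, I would invoke the lower-bound half of Proposition~\ref{12++}. The three conditions on $d_i$ appearing there, namely $d_i | k_i$, $d_i$ squarefree, and every prime factor of $d_i$ lying in $\c{P}_i$, are jointly equivalent to $d_i | (k_i, P(z))$, because $P(z)$ is by construction the squarefree product of the primes in $\c{P}_i$. The pairwise coprimality constraint $\gcd(d_i, d_j) = 1$ in the statement of the lemma may be added without any loss, since it follows automatically from the hypothesis $\gcd(k_i, k_j) = 1$ together with $d_i | k_i$. No step in the argument is delicate: the single point worth highlighting is that $f_i$ must be defined consistently on prime powers so that property (3) of Proposition~\ref{12++} and the identity $\widehat{f_i}(d) = 1/\tau(d)$ for squarefree $d$ hold simultaneously; once this is set up, the remainder is a matter of translating notation.
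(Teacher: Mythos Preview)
Your proposal is correct and follows essentially the same route as the paper: both specialize Proposition~\ref{12++} with $\c{P}_i=\{p:D_0<p\le z\}$ and $f_i(p^m)=1/2$ for $p\in\c{P}_i$, then identify $\widehat{f_i}(d)=2^{-\omega(d)}=1/\tau(d)$ on squarefree $d\mid P(z)$. Your additional remark that the coprimality constraint $\gcd(d_i,d_j)=1$ is automatic from $d_i\mid k_i$ and $\gcd(k_i,k_j)=1$ is a helpful observation that the paper leaves implicit.
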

\begin{proof}
Let $\c{P}_i$ be the set of primes in the interval
$(D_0,z]$ and
define the multiplicative
function $f_i(n)$ through
\[f_i(p^m)=
 \begin{cases} 
\frac{1}{2} &\mbox{if }  p|P(z),\\ 
1 &\mbox{otherwise,}
\end{cases}
\]
for all primes $p$ and $m\in \N$.
The conditions of Proposition~\ref{12++} are  
met and 
it is straightforward to check that
\[\widehat{f_i}(d)=
 \begin{cases} 
2^{-\omega(d)} &\mbox{if }  d|P(z),\\ 
0 &\mbox{otherwise,}
\end{cases}
\]
something which finalises our proof.     
\end{proof}
Define for each $i=1,\ldots,n$,
\[
\c{A}_i:=\langle \Delta_i \rangle
\l(1+\deg(\Delta_i)\r)
\|\c{R}\|_\infty^{\deg(\Delta_i)}
\]
and observe that for each $(s,t) \in \c{B}$
and
$i=1,\ldots,n$,
we have
$|\Delta_i(s,t)|\leq \c{A}_i B^{\deg(\Delta_i)}$.
Injecting  
Lemma~\ref{16+} into~\eqref{15}
produces
 the following result.
\begin{lemma}
\lab{19.a}
Assume that  
${\varpi}>\frac{\log D_0}{\log B}100 n(n+1)$
and let us define
\[
\c{D}:=
\left\{\b{d}\in \N^n:
\begin{array}{l}
d_i \leq \c{A}_i B^{\deg(\Delta_i)}, \mu(d_i)^2=1,
p|d_i\Rightarrow  p\leq B^{\frac{{\varpi}}{100n(n+1)}}, \\
\gcd(d_i,W)=1, \gcd(d_i,d_j)=1, i\neq j
\end{array}
\right\}
\]
and for $\b{d} \in \c{D}$ let
\[
M_{\b{d}}^*(B)
:=
\sum_{\substack{(s,t)\in \c{B} \\ d_i|\Delta_i(s,t) }}
\prod_{i=1}^n
r_i^*(s,t)
.\]
Then we have
\[
N(\pi,B)
\gg 
\sum_{
\b{d} \in \c{D}}
%\hspace{-0,5cm}
\frac{\lambda_{d_1\cdots d_n}^-}
{\tau(d_1) \cdots \tau(d_n)}
M_{\b{d}}^*(B)
,\]
where the implied constant is allowed to depend on $\varpi$.
\end{lemma}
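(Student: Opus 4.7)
The plan is to insert Lemma~\ref{16+} into~\eqref{15} and then interchange the orders of summation. Concretely, for each fixed $(s,t) \in \c{B}$ I apply Lemma~\ref{16+} with $\b{k} = (\Delta_1(s,t),\ldots,\Delta_n(s,t))$ to bound the factor $\prod_{i=1}^n 2^{-\omega(\Delta_i(s,t);z)}$ from below by the Hooley-neutraliser sum
\[
\sum_{\substack{d_i\mid(\Delta_i(s,t),P(z))\\ \gcd(d_i,d_j)=1,\, i\neq j}} \frac{\lambda^-_{d_1\cdots d_n}}{\tau(d_1)\cdots\tau(d_n)}.
\]
Multiplication by the non-negative quantity $\prod_{i=1}^n r_i^*(s,t)$ preserves the inequality; summing over $(s,t) \in \c{B}$ and swapping the orders of the $(s,t)$ and $\b{d}$ summations then produces a lower bound of the desired shape, provided the range of $\b{d}$ can be identified with $\c{D}$ and the inner sum with $M_{\b{d}}^*(B)$.

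The first item to verify is the pairwise coprimality hypothesis of Lemma~\ref{16+}. Since the conclusion of Lemma~\ref{16+} only involves prime divisors of each $k_i$ lying in $(D_0,z]$ through $P(z)$, what is needed is that $\gcd(\Delta_i(s,t),\Delta_j(s,t))$ carry no prime $>D_0$ for $(s,t)\in\c{B}$ and $i\neq j$. Having enlarged $W$ so as to contain every prime divisor of each resultant $\mathrm{Res}(\Delta_i,\Delta_j)$, exactly as was done in the proof of Proposition~\ref{soll}, any common prime divisor $p>D_0$ of $\Delta_i(s,t)$ and $\Delta_j(s,t)$ for primitive $(s,t)$ would have to divide $\mathrm{Res}(\Delta_i,\Delta_j)$, a contradiction; equivalently, one may apply the lemma to the part of each $\Delta_i(s,t)$ supported at primes above $D_0$. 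The assumption $\varpi > \frac{\log D_0}{\log B}\,100n(n+1)$ translates into $z:=B^{\varpi/(100n(n+1))}>D_0$, so that the interval $(D_0,z]$ is non-empty and $P(z)$ is a non-trivial product.

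After interchanging summation, the outer $\b{d}$-range consists of tuples with $d_i\mid P(z)$, pairwise coprime, and with $d_i\mid\Delta_i(s,t)$ for at least one $(s,t)\in\c{B}$. The first two conditions force each $d_i$ to be squarefree with $p\mid d_i \Rightarrow D_0<p\leq B^{\varpi/(100n(n+1))}$ and with $\gcd(d_i,W)=1$, while $d_i\leq \c{A}_iB^{\deg(\Delta_i)}$ follows from $d_i\leq|\Delta_i(s,t)|$ and the elementary bound $|\Delta_i(s,t)|\leq \c{A}_iB^{\deg(\Delta_i)}$ for $(s,t)\in\c{B}$. Hence the range obtained is contained in $\c{D}$; conversely, extending the summation to all of $\c{D}$ only adds tuples $\b{d}$ for which no $(s,t)\in\c{B}$ simultaneously satisfies $d_i\mid\Delta_i(s,t)$ for every $i$, and such tuples contribute $M_{\b{d}}^*(B)=0$ by definition. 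The inner sum for fixed $\b{d}$ is by construction $M_{\b{d}}^*(B)$, yielding the claimed lower bound.

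Since the argument is a routine interchange once Lemma~\ref{16+} is applied and the sign of $\lambda^-$ is respected by the non-negativity of $r_i^*$, there is no substantial obstacle; the only subtle point is the verification of the pairwise coprimality hypothesis, which is precisely the motivation for absorbing the prime divisors of the pairwise resultants of the $\Delta_i$ into $W$ from the outset.
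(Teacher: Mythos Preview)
Your argument is correct and is precisely the one-line proof the paper gives (``Injecting Lemma~\ref{16+} into~\eqref{15}''), spelled out in full detail. The only point worth noting is that you have handled the coprimality hypothesis of Lemma~\ref{16+} explicitly (by passing to the $D_0$-free part of each $\Delta_i(s,t)$), whereas the paper leaves this implicit, but the substance is identical.
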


\subsection{Transition to $r$ functions}
\lab{19.b}
Our aim in this section is to
replace the $r^*$ functions by functions that are amenable to divisor sum techniques.
To this aim let us define for all $(s,t) \in \c{B}$,
$i=1,\ldots,n$, and $m \in \N$ with
$m|\Delta_i(s,t)$ the function
\[r_i(s,t;m):=\sum_{\substack{k|\Delta_i(s,t)/m\\ \gcd(k,W)=1}}
\l(\frac{F_i(s,t)}{k}\r)
.\]
For any
$a\in \Z-\{0\}$, the Jacobi symbol $(\frac{a}{k})$ is multiplicative
with respect to $k$, and therefore,
whenever $A\in \Z-\{0\}$, we are provided with
\[\prod_{\substack{p|A\\ p\nmid W}}\l(1+\l(\frac{a}{p}\r)\r)=
\sum_{\substack{m^2|A \\ \gcd(m,W)=1}}
\mu(m)
\sum_{\substack{k|A/m^2\\ \gcd(k,W)=1}}
\l(\frac{a}{k}\r)
,\] which implies that  
for all $(s,t) \in \c{B}$
and
$i=1,\ldots,n$, 
one has 
\[r_i^*(s,t)=\sum_{\substack{m_i^2|\Delta_i(s,t) \\ \gcd(m_i,W)=1}}
\mu(m_i)
r_i(s,t;m_i^2)
.\]
Letting 
\[
\c{M}:=
\left\{\b{m}\in \N^n:
\begin{array}{l} 
m_i \leq \c{A}_i^{1/2}
B^{\deg(\Delta_i)/2}, 
\mu(m_i)^2=\gcd(m_i,W)=1,\\
\gcd(m_i,m_j)=\gcd(m_i,d_j)=1, i\neq j 
\end{array}
\right\}
\] and defining
for $(\b{d},\b{m}) \in \c{D}\times\c{M}$,
\beq{s d sum}{
S_{\b{d}}(B;\b{m})
:=
\sum_{\substack{(s,t) \in \c{B}\\ [m_i^2,d_i]|\Delta_i(s,t)}}
\prod_{i=1}^n
r_i(s,t;m_i^2) 
,} 
allows us to infer the validity of
\beq{teaa}{
M_{\b{d}}^*(B)=
\sum_{\b{m} \in \c{M}}
\mu(m_1\cdots m_n) 
S_{\b{d}}(B;\b{m})
.}
Let us define for all
$Y \in \R_\geq 1$
and $\b{d} \in \c{D}$,
\[
M_{\b{d}}(B,Y)
:=
\sum_{\substack{\b{m} \in \c{M}\\\|\b{m}\|\leq Y}}
\mu(m_1\cdots m_n) 
S_{\b{d}}(B;\b{m})
.\]
\begin{lemma}
\lab{cut-off 2}
We have for all
$Y \in \R \cap (1,B^{1/2})$,
$\b{d} \in \c{D}$
and any
$\varepsilon>0$,  
\[
M_{\b{d}}^*(B)=
M_{\b{d}}(B,Y)
+O_\varepsilon\!\l( 
\frac{B^{2+\varepsilon}}{Y}
\r)
.\]
\end{lemma}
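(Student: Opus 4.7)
The plan is to bound the difference $M_{\b{d}}^*(B)-M_{\b{d}}(B,Y)=\sum_{\b{m}\in\c{M},\,\|\b{m}\|>Y}\mu(m_1\cdots m_n)S_{\b{d}}(B;\b{m})$ by absolute values. The pointwise bound $|r_i(s,t;m_i^2)|\leq\tau(\Delta_i(s,t)/m_i^2)\leq\tau(\Delta_i(s,t))\ll_\varepsilon B^\varepsilon$ immediately yields
\[
|M_{\b{d}}^*(B)-M_{\b{d}}(B,Y)|\ll_\varepsilon B^\varepsilon\sum_{\b{m}\in\c{M},\,\|\b{m}\|>Y}\#\{(s,t)\in\c{B}:[m_i^2,d_i]\mid\Delta_i(s,t)\ \forall\, i\}.
\]

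Since $\|\b{m}\|>Y$ forces some coordinate $m_j$ to exceed $Y$, a union bound over $j$ together with the estimate $\sum_{m_i^2\mid\Delta_i(s,t)}1\leq\tau(\Delta_i(s,t))\ll_\varepsilon B^\varepsilon$ for the indices $i\neq j$ reduces the problem, after interchanging summations, to proving
\[
\sum_{m>Y}\#\{(s,t)\in\c{B}:m^2\mid\Delta_j(s,t),\ d_i\mid\Delta_i(s,t)\ \forall\, i\}\ll_\varepsilon\frac{B^{2+\varepsilon}}{Y\prod_id_i}
\]
for each index $j\in\{1,\ldots,n\}$.

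The key tool is a uniform lattice-point count. After absorbing the finitely many bad primes into $W$, the pairwise coprimality of the forms $\Delta_i$ allows the divisibility conditions $m^2\mid\Delta_j$ and $d_i\mid\Delta_i$ ($i\neq j$) to decouple via the Chinese remainder theorem, so the relevant $(s,t)$ lie in a union of sublattices of $\Z^2$ of common index $\asymp m^2\prod_id_i$. A standard lattice-point estimate yields the main-term contribution $B^{2+\varepsilon}/(m^2\prod_id_i)$, and summation over $m>Y$ via $\sum_{m>Y}m^{-2}\ll 1/Y$ produces the required bound.

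The principal obstacle is the control of the error terms in the lattice-point estimate. Since $m^2\leq|\Delta_j(s,t)|\ll B^{\deg\Delta_j}\leq B^3$, the variable $m$ can be as large as $B^{3/2}$, so a naive boundary error of shape $O(Bm^\varepsilon)$ would sum to $B^{5/2+\varepsilon}$, far in excess of the target. The hypothesis $Y<B^{1/2}$ grants $B^{2+\varepsilon}/Y\gg B^{3/2+\varepsilon}$, giving just enough margin, but realising this requires a careful handling of the successive minima of the sublattices $\{(s,t):s\equiv\alpha t\,\bmod\,m^2\}$ attached to roots $\alpha$ of $\Delta_j(x,1)\bmod{m^2}$, which can occasionally be atypically small. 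The hypothesis $\sum_i\deg\Delta_i\leq 3$ ultimately caps both the range of $m$ and the number of such sublattices, thereby enabling the error absorption.
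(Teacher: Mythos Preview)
Your overall strategy coincides with the paper's: bound $r_i$ by a divisor function, pick an index $j$ with $m_j>Y$, and count lattice points. There is, however, a genuine slip in your intermediate target. You ask for
\[
\sum_{m>Y}\#\{(s,t)\in\c{B}:m^2\mid\Delta_j(s,t),\ d_i\mid\Delta_i(s,t)\ \forall i\}\ll_\varepsilon\frac{B^{2+\varepsilon}}{Y\prod_id_i},
\]
but this is too strong and in general false. The lattice of index $\asymp m^2\prod_id_i$ gives the pointwise bound $B^\varepsilon\bigl(B^2/(m^2\prod_id_i)+1\bigr)$, and the ``$+1$'' summed over $Y<m\ll B^{3/2}$ contributes $B^{3/2+\varepsilon}$, which is \emph{not} $\ll B^{2+\varepsilon}/(Y\prod_id_i)$ once $\prod_id_i$ exceeds $B^{1/2}/Y$; nothing in the hypotheses of the lemma prevents this (the set $\c{D}$ only imposes $d_i\leq\c{A}_iB^{\deg\Delta_i}$). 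The lemma, however, only demands the error $B^{2+\varepsilon}/Y$, so the fix is simply to drop the conditions $d_i\mid\Delta_i$ and remove $\prod_id_i$ from the denominator; then the argument goes through. This is exactly what the paper does: it discards both the $d_i$-divisibilities and the congruence $(s,t)\equiv(s_0,t_0)\pmod{W}$, bounding instead $L(\b{m})=\#\{(s,t)\in\Zp^2\cap B\c{R}:m_i^2\mid\Delta_i(s,t)\ \forall i\}$.

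Your closing paragraph overstates the difficulty. No analysis of successive minima is needed here: the crude estimate $\#(\Lambda\cap B\c{R})\ll B^2/\det\Lambda+1$ is enough, because the accumulated ``$+1$'' over $m\ll B^{\deg(\Delta_j)/2}\leq B^{3/2}$ is $\ll B^{3/2+\varepsilon}$, and the assumption $Y<B^{1/2}$ makes this $\ll B^{2+\varepsilon}/Y$. The rank hypothesis $r(\pi)\leq 3$ enters only through the bound $m\ll B^{3/2}$ on the range of $m$ (or, in the paper's version, through $\prod_i m_i\ll B^{3/2}$), not through any delicate minima estimate.
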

\begin{proof}
Each $m_i$ in~\eqref{teaa}
satisfies
$m_i^2\leq |\Delta_i(s,t)| \le \c{A}_i B^{\deg(\Delta_i)}$
and hence 
the familiar estimate
$\tau(n)\ll_\varepsilon B^\varepsilon$
shows that
\beq
{heroes_bowie}
{
M_{\b{d}}^*(B)-
M_{\b{d}}(B,Y)
\ll
B^\varepsilon
\sum_{i=1}^n \ 
\sum_{\substack{\b{m} \in \c{M} \\ m_i >Y }}
L(\b{m}),}
where
\[L(\b{m}):=
\#\Big\{(s,t) \in \Zp^2\cap B\c{R}: m_i^2 |\Delta_i(s,t)\Big\}
.\]
We next show that $L(\b{m})$ is contained in a union of at most 
$\ll_\varepsilon B^\varepsilon$ lattices in $\Z^2$
of determinant 
$m_1^2\cdots m_n^2$.
This is trivially the case if there exists a prime $p$ 
dividing one of the $m_i$
such that  
$\Delta_i(s,t)\equiv 0 \md{p^2}$ has no solution with $(s,t) \in \Zp^2$.
In the opposite case, for each such prime $p$
there exist at most
\[\#\{\xi\md{p^2}:\Delta_i(\xi,1)\equiv 0\md{p^2} \ \text{or} \ 
\Delta_i(1,\xi)\equiv 0\md{p^2}
\}
\ll_\varepsilon B^\varepsilon
\]
values of $\xi$ such that all $(s,t)$ in the aforementioned set 
belong in the lattice determined by the condition 
$s\equiv \xi t\md{p^2}$ or
$t\equiv \xi s \md{p^2}$
and whose determinant is $p^2$.
Noting that 
$\b{m} \in \c{M}$ and hence the integers $m_i$ are coprime in pairs,
we can combine these lattices in a single lattice $\Lambda$ of determinant
$m_1^2\cdots m_n^2$.
The
estimate
\[
\#\Big\{\Z^2 \cap \Lambda \cap B\c{R}\Big\}\ll
\frac{B^2 \vol(\c{R})}{|\det(\Lambda)|}
+1\]
reveals that 
\[L(\b{m})\ll_\varepsilon
B^\varepsilon
\l(\frac{B^2}{m_1^2\cdots m_n^2}+1\r),
\]
which, once injected into~\eqref{heroes_bowie}, provides the proof of our lemma
by virtue of the fact that
\[m_i \leq \c{A}_i^{1/2}B^{\deg(\Delta_i)/2}\] for each $\b{m} \in \c{M}$.
\end{proof}
Our aim in~\S\ref{stephan d}
is to evaluate the sums
$S_{\b{d}}(B;\b{m})$.
We choose
to state the end result of~\S\ref{stephan d}
here and explore its 
implications regarding
$M_{\b{d}}^*(B)$ immediately.

Before stating the result some notation is required.
Let us introduce the 
following set
of multiplicative 
functions
\[\c{U}:=\Big\{
f:\N\to \mathbb{R}_{>0}:
f(n)=\prod_{p|n}(1+g(p))
\
\text{where} \ g(p)\ll_{f} p^{-1}
\Big\}
.\]
Notice that $\c{U}$ is a group under pointwise multiplication
with identity given by the constant function $f(n)=1$.
Furthermore, for each element $f \in \c{U}$ there exists
$a>0$, such that  
for all $\varepsilon>0$ we have
\beq{pointw}{f(n)\ll \tau(n)^a\ll_\varepsilon n^\varepsilon
.}We shall
find it convenient to define for each $m \in \N$ and $f \in \c{U}$ the function
\beq{conv}{
f(n,m)=\sum_{\substack{d|n\\ \gcd(d,m)=1}}\mu(d)^2g(d)
,} notice that for all $(f(n),m) \in \c{U}\times \N$ we have $f(n,m) \in \c{U}$.
It is straightforward to verify
\beq{genesis}
{f\l(\prod_{i=1}^r n_i\r)=f(n_1)
\prod_{i=2}^r 
f\!\l(n_i,\prod_{j=1}^{i-1} n_j\r)
}
for all $\b{n} \in \N^r$ and $f \in \c{U}$,
while the property
\beq{all+}{
f(\gcd(n,m))=\frac{f(n)f(m)}{f(nm)}
\
\
(n,m \in \N,f\in \c{U}),
} is obviously valid.

Let us define
for all $d \in \N$
coprime to $W$ and each
$i=1,\ldots,n$,
\[ 
\tau_i(d):=
\hspace{-0,5 cm}
\sum_{\substack{\xi \md{d}\\\Delta_i(\xi,1)\equiv 0
\md{d}}}
\hspace{-0,5 cm}
1,
\hspace{1 cm}
\varrho_i(d):=
\hspace{-0,5 cm}
\sum_{\substack{\xi \md{d}\\
\Delta_i(\xi,1)\equiv 0
\md{d}}}
\hspace{-0,5 cm}
\left(\frac{F_i(\xi,1)}{d}\right).
\]
Extending both functions to $\N$ by letting them vanish when the argument has a prime divisor $p\leq D_0$,
allows one to show that both $\tau_i$ and $\varrho_i$ are multiplicative.
Assuming that $D_0$ is large enough,
Hensel's lemma can be utilised to prove
that for all primes $p>D_0$ and $k\in \N$
we have 
\[|\varrho_i(p^k)|\leq \tau_i(p^k)=\tau_i(p)\leq \deg(\Delta_i),\]
and therefore
there exists $A_i>1$ such that for all 
$\varepsilon>0$
and
$d \in \N$,
 the estimates
\beq{national acrobat}
{
\tau_i(d), \varrho_i(d) \ll 
\tau(d)^{A_i}
\ll_\varepsilon
d^\varepsilon
} hold.
Defining 
the arithmetic
functions
\beq{bowie_low}{
\sigma_i(d):=\prod_{p|d}\l(\tau_i(p)+\sum_{k=0}^\infty \varrho_i(p^{k+1})p^{-k}\r)
\
\
\text{and}
\
\
\tau^\sharp_i(d):=\prod_{p|d}\l(\tau_i(p)+\sum_{k=1}^\infty \varrho_i(p^k)p^{-k}\r),
}
enables us to state the main result of~\S\ref{stephan d}.
\begin{theorem}
\label{b evans}
There exist
functions $g_i,h_i \in \c{U}$
and
a positive constant $c$,
such that
for each
$(\b{d},\b{m}) \in \c{D} \times \c{M}$,
we have
\[S_{\b{d}}(B;\b{m})=c B^2
\l(\prod_{i=1}^n
\frac{\sigma_i(d_i)}{g_i(d_i)d_i}\r)
\l(\prod_{i=1}^n 
\frac
{\tau^\sharp_i(m_i/\gcd(d_i,m_i))}
{h_i(m_i,d_i)m_i^2\gcd(d_i,m_i)^{-1}}
\r)
+O\!\l(\|\b{d}\|^n
B^{\frac{19}{20}}\r),\]
where the implied constant is independent of $B,\b{d}$ and  $\b{m}$.
\end{theorem}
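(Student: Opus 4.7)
The plan is to expand the inner product of divisor sums, interchange orders of summation, and reduce the problem to an average of incomplete character sums associated to Jacobi symbols twisted by lattice conditions. Writing
\[
\prod_{i=1}^n r_i(s,t;m_i^2)=\sum_{\substack{k_i\mid \Delta_i(s,t)/m_i^2\\ \gcd(k_i,W)=1}}\prod_{i=1}^n\l(\frac{F_i(s,t)}{k_i}\r),
\]
one swaps the $(s,t)$-sum with the $\b{k}$-sum, so that $S_{\b{d}}(B;\b{m})$ becomes
$\sum_{\b{k}}\prod_i(F_i(s,t)/k_i)$ weighted by the indicator of the divisibility $[m_i^2 k_i,d_i]\mid \Delta_i(s,t)$ and $(s,t)\in\c{B}$. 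A priori each $k_i$ can be as large as $B^{\deg(\Delta_i)}$, which is incompatible with any effective lattice-point count. This is precisely where Proposition~\ref{let it be} intervenes via the hyperbola trick: since $(F_i(s,t)/\Delta_i(s,t)^\dagger)=1$, the Jacobi symbol $(F_i(s,t)/k_i)$ equals $(F_i(s,t)/(\Delta_i(s,t)^\dagger/k_i))$, allowing one to swap any divisor $k_i$ with its complement. After a dyadic splitting I can reduce to the case $k_i\le \c{A}_i^{1/2}B^{\deg(\Delta_i)/2}$ for every $i$.

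With each $k_i$ small, I would amalgamate the moduli into a single integer $q_{\b{k}}:=\prod_i[m_i^2 k_i,d_i]$, using pairwise coprimality from $\c{D}\times\c{M}$ to get genuine multiplicativity over the indices. The $(s,t)$-sum then decomposes into residue classes modulo $q_{\b{k}}$, and geometry of numbers combined with the fact that $\c{R}$ is a box yields
\[
\sum_{\substack{(s,t)\in\c{B}\\[m_i^2 k_i,d_i]\mid \Delta_i(s,t)}}\prod_{i=1}^n\l(\frac{F_i(s,t)}{k_i}\r)
=\frac{B^2\vol(\c{R})}{q_{\b{k}}^2}\,\Sigma(\b{k};\b{d},\b{m})+O\!\l(\frac{B}{q_{\b{k}}}\Sigma(\b{k};\b{d},\b{m})\r),
\]
where $\Sigma(\b{k};\b{d},\b{m})$ is the complete twisted character sum over residues modulo $q_{\b{k}}$. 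A direct computation, multiplicative in $p\nmid W$, identifies the local factor at $p$ with the generating series producing $\tau_i$ and $\varrho_i$; assembling the Euler products and separating out those primes dividing $d_i$, respectively $m_i/\gcd(d_i,m_i)$, yields exactly $\sigma_i(d_i)/(g_i(d_i)d_i)$ and $\tau^\sharp_i(m_i/\gcd(d_i,m_i))/(h_i(m_i,d_i)m_i^2\gcd(d_i,m_i)^{-1})$, with the correction functions $g_i,h_i\in\c{U}$ absorbing the higher-order contributions that arise when squarefreeness of the discriminant is used. The uniformity in the factorisation of $\prod_i\Delta_i$ should be codified in Lemma~\ref{lemmon}, where one checks that the local density depends only on the $p$-adic geometry of each $\Delta_i$ and not on its global splitting over $\Q$.

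The main obstacle is the error term: after the hyperbola trick, one must sum the boundary contribution $B\Sigma(\b{k};\b{d},\b{m})/q_{\b{k}}$ over all admissible $\b{k}$ and still obtain a power saving of $B^{1/20}$ that is only linear in each $\|\b{d}\|$. The naive Weil-type estimate $\Sigma\ll q_{\b{k}}^{1+\varepsilon}$ would already cost too much once summed. I would therefore need a uniform bound for the incomplete sum obtained by Poisson/Fourier decomposition in $(s,t)\md{q_{\b{k}}}$, exploiting the square-root cancellation of the character $\prod_i(F_i(\cdot,\cdot)/k_i)$ along the variety $\{[m_i^2k_i,d_i]\mid \Delta_i(s,t)\}$. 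The required input is the deep Weil-type estimate of S.~Daniel for character sums attached to binary forms, together with a careful descent to the coprime pieces of $k_i,d_i,m_i$ to decouple the character modulus from the divisibility modulus; the hypothesis $\sum_i\deg(\Delta_i)\le 3$ is what guarantees that the relevant curves have genus low enough for the power-saving bound to hold with the desired polynomial dependence in $\|\b{d}\|$, which ultimately produces the exponent $\tfrac{19}{20}$.
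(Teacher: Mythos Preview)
Your outline captures the opening moves correctly: expand the product of $r_i$'s, invoke Proposition~\ref{let it be} for the hyperbola trick to force $k_i\le Q_i\asymp B^{\deg(\Delta_i)/2}$, and then pass to residue classes. After that point, however, both the error and the main term are handled differently in the paper, and your proposed treatment of each has a genuine gap.

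\textbf{Error term.} The display you wrote, with error $O\bigl(B\,\Sigma(\b{k};\b{d},\b{m})/q_{\b{k}}\bigr)$, is not correct: the lattice-point error in a single residue class does not inherit the sign of the Jacobi symbol, so there is no cancellation to exploit and no character sum $\Sigma$ in the remainder. What one actually gets (Lemma~\ref{laatt}) is an error of order $B^{1+\varepsilon}/\lambda_1(G_{\b{k}})$ for each $\b{k}$, with $\lambda_1$ the first successive minimum of the rank-$2$ lattice $\{s\equiv\xi t\ (q_1\cdots q_n)\}$. Daniel's input is \emph{not} a Weil bound: it is the observation (Lemma~\ref{prepar+}) that the minimal vector $\b{v}$ of $G_{\b{k}}$ satisfies $\|\b{v}\|\ll (k_1\cdots k_n)^{1/2}\ll B^{r(\pi)/4}$ and that for fixed $\b{v}$ only $O(B^\varepsilon)$ choices of $\b{k}$ have that minimal vector. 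Summing $\|\b{v}\|^{-1}$ over $\|\b{v}\|\ll B^{3/4}$ gives the $B^{7/4+\varepsilon}$ error. This is precisely where $r(\pi)\le 3$ enters; it has nothing to do with the genus of any curve, and Poisson summation or square-root cancellation for $\prod_i(F_i/k_i)$ plays no role.

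\textbf{Main term.} After the hyperbola trick the region for $(s,t)$ is no longer $B\c{R}$: when $\psi_i=1$ one imposes $|\Delta_i(s,t)|\ge k_i m_i^2 Q_i W_i$, so the main term carries a volume $\omega_{\bb{\psi}}(\b{k})$ depending on $\b{k}$ (see~\eqref{def:omom} and Lemma~\ref{hyper0}). You cannot therefore pass directly to an Euler product. The paper evaluates $T_{\b{d},\bb{\psi}}$ by iterated partial summation in $k_n,\ldots,k_1$ (Lemma~\ref{lemmon}), and at each step one needs the partial sums $\sum_{k\le x}\varrho_i(k)h(k)/k$ with a \emph{power-saving} error uniform in the coprimality modulus. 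This is obtained by Perron's formula and contour integration (Lemma~\ref{triumphat}), using that $\sum_k\varrho_i(k)k^{-s}$ differs from an Artin $L$-function by a factor holomorphic and bounded for $\Re(s)>\tfrac12$; the convexity bound for $L_i(s)$ gives the exponent $\gamma_i=2/(4+\deg(\Delta_i))$. Your ``direct Euler-product computation'' skips this analytic step entirely and would not produce the required uniformity in $\b{d}$ nor the power saving that feeds into the final exponent $19/20$.
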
 
Using Theorem~\ref{b evans} we can now deduce the following result.
\begin{proposition}
\lab{r2d} 
There exist
$u_i \in \c{U},c'>0$,
such that  
one has
for all $\b{d} \in \c{D}$, 
\[M_{\b{d}}^*(B)
=c' B^2
\l(\prod_{i=1}^n\frac{\sigma_i(d_i)}{d_i}u_i(d_i)
\r)
+O\!\l(
\|\b{d}\|^{n}
B^{2-\frac{1}{50n}}
\r)
,\]
where the implied constant is independent of $B$ and $\b{d}$.
\end{proposition}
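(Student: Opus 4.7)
The plan is to combine Lemma~\ref{cut-off 2}, equation~\eqref{teaa}, and Theorem~\ref{b evans} in a fairly mechanical way, with the delicacy residing in the analysis of an Euler product. First I would invoke Lemma~\ref{cut-off 2} with a cut-off parameter $Y \in (1, B^{1/2})$ to be fixed later, reducing the task to understanding
\[
M_{\b{d}}(B,Y) = \sum_{\substack{\b{m} \in \c{M} \\ \|\b{m}\| \leq Y}} \mu(m_1 \cdots m_n)\, S_{\b{d}}(B;\b{m}),
\]
up to an error $O_\varepsilon(B^{2+\varepsilon}/Y)$. Substituting the asymptotic of Theorem~\ref{b evans} into each $S_{\b{d}}(B;\b{m})$ incurs a further error bounded by the number of admissible $\b{m}$ times the $O$-term of the theorem, namely $\ll \|\b{d}\|^n B^{19/20} Y^n$.

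Collecting the $\b{m}$-independent factors from the main term of Theorem~\ref{b evans}, the main contribution takes the shape $c B^2 \prod_{i=1}^n \sigma_i(d_i)(g_i(d_i)\, d_i)^{-1}\, T(\b{d};Y)$, where
\[
T(\b{d};Y) := \sum_{\substack{\b{m} \in \c{M} \\ \|\b{m}\| \leq Y}} \mu(m_1\cdots m_n) \prod_{i=1}^n \frac{\tau_i^\sharp(m_i/\gcd(d_i,m_i))\, \gcd(d_i,m_i)}{h_i(m_i,d_i)\, m_i^2}.
\]
Using the bounds~\eqref{national acrobat} on $\tau_i^\sharp$ and~\eqref{pointw} on $h_i$, the tail $\|\b{m}\| > Y$ contributes $\ll_\varepsilon \|\b{d}\|^{n\varepsilon} Y^{-1+\varepsilon}$ to $T(\b{d};Y)$, so one may extend the summation to $Y = \infty$ with negligible loss, obtaining an absolutely convergent sum $T(\b{d})$.

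Next I would expand $T(\b{d})$ as an Euler product over primes $p > D_0$ with $p \nmid W$. The coprimality constraints $\gcd(m_i,m_j) = \gcd(m_i,d_j) = 1$ for $i \neq j$ built into $\c{M}$, together with the pairwise coprimality of the $d_i$ in $\c{D}$, ensure that at each prime $p$ at most one $m_i$ may be divisible by $p$, and moreover this index is forced whenever $p$ divides one of the $d_j$. A direct computation of the local factor shows it is of the form $1 + O(1/p)$, with the implied constant controlled by the $\c{U}$-membership of $g_i, h_i$ and by $\max_i \deg(\Delta_i)$. Splitting the product into a generic part coming from primes coprime to $\prod_j d_j$ and local corrections at primes dividing some $d_j$, and tidying up using~\eqref{conv},~\eqref{genesis} and~\eqref{all+}, one arrives at $T(\b{d}) = c_0 \prod_{i=1}^n v_i(d_i)$ with $v_i \in \c{U}$ and $c_0 > 0$. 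Setting $u_i := v_i/g_i \in \c{U}$ (recall that $\c{U}$ is a group under pointwise multiplication) and $c' := c\, c_0$ produces the main term in the claimed form.

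Finally I would balance the two error terms by choosing $Y = B^{1/(25n)}$, which makes both $\ll \|\b{d}\|^n B^{2 - 1/(50n)}$ once $\varepsilon$ is fixed small enough in terms of $n$. The main obstacle I anticipate is precisely the Euler-product bookkeeping in the previous paragraph: one must verify uniformly in $\b{d}$ that every local factor really is $1 + O(1/p)$ with the implied constant independent of $\b{d}$, so that the resulting multiplicative functions $u_i$ genuinely lie in $\c{U}$. This forces a careful case analysis according to whether $p$ divides some $d_i$ or none, and rests crucially on the pairwise coprimalities enforced in both $\c{D}$ and $\c{M}$.
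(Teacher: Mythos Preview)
Your proposal is correct and follows essentially the same route as the paper: apply Lemma~\ref{cut-off 2}, insert Theorem~\ref{b evans}, extend the $\b{m}$-sum to infinity using the divisor-type bounds, and compute the resulting Euler product to extract the $u_i \in \c{U}$. The only cosmetic differences are your choice $Y=B^{1/(25n)}$ versus the paper's $Y=B^{1/(20(1+n))}$ (both admissible), and that the paper makes the local factors explicit via the quantities $\eta(p)=\sum_i \tau_i^\sharp(p)/h_i(p)$ and $\eta_j(p)=p+\sum_{i\neq j}\tau_i^\sharp(p)/h_i(p)$, which immediately give the $1+O(1/p)$ behaviour you flag as the main obstacle.
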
 
\begin{proof} 
Letting
\[A(\b{d})=
\sum_{\substack{\b{m} \in \c{M}\\
\|\b{m}\|\leq Y}}
\mu(m_1\cdots m_n) 
\prod_{i=1}^n 
\frac
{\tau^\sharp_i(m_i/\gcd(d_i,m_i))}
{h_i(m_i,d_i)m_i^2}
\gcd(d_i,m_i),
\]
and using Lemma~\ref{cut-off 2} with $Y=B^{\frac{1}{20(1+n)}}$,
allows us to infer
upon employing Proposition~\ref{b evans},
that
$M_{\b{d}}^*(B)$
equals
\beq{all}{
c B^2
\l(\prod_{i=1}^n
\frac{\sigma_i(d_i)}{g_i(d_i)d_i}\r)
A(\b{d}),
}
up to an admissible error term.
Define the constants
\[\eta(p)=\sum_{i=1}^n\frac{\tau^\sharp_i(p)}{h_i(p)}
\
\
\text{and}
\
\
\eta_j(p)=p+\sum_{i\neq j}\frac{\tau^\sharp_i(p)}{h_i(p)}
.\]
Using~\eqref{national acrobat} provides the estimate
\[\sum_{m>Y}\frac
{\tau^\sharp_i(m/\gcd(d,m))}
{h_i(m,d)m^2}
\gcd(d,m)\ll_\varepsilon
\sum_{m>Y} \frac{m^\varepsilon}{m^2}\gcd(d,m)
\ll_\varepsilon d^\varepsilon Y^{\varepsilon-1},
\]
which
shows that the condition $\|\b{m}\|\leq Y$ in $A(\b{d})$ can be removed harmlessly.
We therefore end up with a series whose Euler product
is
\[
\prod_{p\nmid W}\l(1-\frac{\eta(p)}{p^2}\r)
\prod_{i=1}^n
\prod_{p|d_i}\frac{1-\frac{\eta_i(p)}{p^2}}{1-\frac{\eta(p)}{p^2}}
=c''\prod_{i=1}^nh_i'(d_i), \ \text{say}.\]
The estimates $\eta(p)\ll 1$ and $\eta_i(p)\ll p$
imply that, once $W$ is enlarged, we can safely assume $c''>0$ and $h_i' \in \c{U}$.
Letting $c'=cc''$
and
$u_i=h_i'/g_i$ completes our proof. 
\end{proof}
\section{Evaluation of the ensuing divisor sums} 
\lab{stephan d}
Our aim
in~\S\ref{stephan d}
is to prove Theorem~\ref{b evans}.
\subsection{Auxiliary results}
We may use
Hensel's lemma to deduce that for all 
$ \nu \in \N$
and
primes $p>D_0$, 
\beq{bi1231}{ 
\varrho_i(p^\nu)=
\begin{cases} 
\varrho_i(p) &\mbox{if} \  \nu \ \mbox{is odd},\\ 
\tau_i(p) &\mbox{otherwise,}
\end{cases}
}
from which we infer that the function
\beq{immodop0}{
R_i(p,z):=
\sum_{\nu=1}^\infty
\varrho_i(p^\nu)z^\nu,
}
defined for
$z\in \mathbb{C}$ with
$|z|\le 2^{-1/2}$ and primes $p$,
satisfies
\beq{bwv 244}
{R_i(p,z)=
z\frac{\varrho_i(p)+z\tau_i(p)}{1-z^2}
.}
We can similarly prove the
identity
\beq{bwv 988}
{\sigma_i(p)=
\tau_i(p)+\varrho_i(p)
+\frac{p\tau_i(p)+\varrho_i(p)}{p^2-1}
.}
For a fixed $i=1,\ldots,n$ we
let $c=\Delta_i(1,0)$ and we immediately see that 
$\theta:=\theta_i/\Delta_i(1,0)$
is a root of the irreducible integer polynomial
$\Delta_i(x,c)/c$, and hence an algebraic integer.
Therefore, 
letting
\[
k=\Q(\theta)
\
\text{and}
\
K=\Q(\theta,\sqrt{F_i(\theta,c)}),
\]
we see that
$\Q(\theta_i)=\Q(\theta)$,
thus leading to
$[K:k]=2$
via~\eqref{splitt}.
The non-trivial 
irreducible
representation
of $\mathrm{Gal}(K/k)$ 
gives rise to the entire Artin $L$-function
\[
L_i(s)=\prod_{\mathfrak{p}\trianglelefteq
\mathcal{O}_k
}
\left(
1-\chi(\mathfrak{p})N_{k}(\mathfrak{p})^{-s}
\right)^{-1},
\]
that does not vanish at $s=1$
and
where
$\chi(\mathfrak{p})$
is $0$ if $\mathfrak{p}$ is ramified in $K$ and 
$1$ or $-1$ according to if $\mathfrak{p}$
is split or inert in $\mathcal{O}_K$. 
Enlarging $W$ allows us to use 
the well-known principle of Dedekind,
according to which,
linear factors $x-\xi \in
\F_p[x]$
of $\Delta_i(x,c)$ 
parametrise
the linear prime ideals $\p_\xi $ 
above $p$.
The map 
$\Z[\theta]/(p,\theta-\xi)\to\c{O}_k/\p_\xi \c{O}_k$,
given by
\[
h(\theta)+(p,\theta-\xi)
\mapsto
h(\theta)+\p_\xi \c{O}_k,
\]
is an isomorphism,
and
therefore the image of
$F_i(\theta,c)$
is
a square in
$\c{O}_k/\p_\xi
\c{O}_k
\simeq 
\F_p$
if and only if the prime
$\p_\xi$ splits in $K$.
Noting that $\deg(F_i)$ is even,
we are led to
\beq{golden years}
{\sum_{\substack{\mathfrak{p}|(p)
\\N_k(\mathfrak{p})=p}}
\chi(\mathfrak{p})
=
\sum_{
\substack{
\xi \md{p} \\ \Delta_i(\xi,c)\equiv 0 \md{p}}}
\left(
\frac{F_i(\xi,c)}{p}
\right)=\varrho_i(p).
}
Letting 
\beq{alladin sane}
{P_i(s):=\prod_{p \nmid W}
\left(1+R_i(p,p^{-s})
\right),}
we see
that the analytic properties
of the
function
$G_i= P_i L_i^{-1}$
are determined solely by
prime ideals with $N_{k}(\mathfrak{p})<p^2$.
In particular this
enables us 
to show that $G_i$
is holomorphic 
in the region
$\Re(s)>\frac{1}{2}$ and
that for any $\varepsilon>0$ it satisfies 
$G_i(s)\asymp_\varepsilon 1$ 
whenever
$\Re(s)>\frac{1}{2}+\varepsilon$. 
We may thus deduce that 
\beq{ippo}{P_i(1)\neq 0}
and that for all $\varepsilon>0$,
\beq{krateio}{
P_i(s)\ll_\varepsilon
|\Im(s)|^{\frac{\deg(\Delta_i)}{2}(1-\Re(s))+\varepsilon},
\
\text{whenever}
\
\frac{1}{2}<\Re(s)\leq 1,}
the last property being due to the convexity bound~\cite[Eq.(5.20)]{iwa} applied to $L_i(s)$.

Observe that~\eqref{golden years}
and the prime number theorem for the
$L$-function $L_i(s)$  
reveal that
\beq{mertenoulis}{\sum_{p<z}\frac{\varrho_i(p)}{p}=
\sum_{N_{k}(\mathfrak{p})< z}\frac{\chi(\mathfrak{p})}{N_{k}(\mathfrak{p})}+
b_i+O\l(\frac{1}{\log z}\r)
=b'_i+O\l(\frac{1}{\log z}\r),}
for some constants $b_i$ and $b'_i$, both independent of $z$.
We similarly have
\[
\tau_i(p)=
\sum_{\substack{\mathfrak{p}|(p)
\\N_k(\mathfrak{p})=p}}1
\]
and therefore the prime number theorem for the Dedekind zeta function
of $k$ leads to the estimate
\beq{mertenoulis'}{\sum_{p<z}\frac{\tau_i(p)}{p}=\log \log z+b''_i+O\l(\frac{1}{\log z}\r),}
for some constant $b''_i$ independent of $z$.

\subsection{Preparations} 
\label{lejeune}
Recall the definition of the
divisor sums
$S_\b{d}(B;\b{m})$,
introduced in~\eqref{s d sum}.
The fact
$\Delta_i(s_0,t_0)\neq 0$
guarantees that
for each prime $p$
the sequence 
$\nu_p(\Delta_i(s,t))$
stabilises when 
$(s,t)\in \c{B}$ and
$(s,t)\equiv (s_0,t_0)\md{p^n}$ for $n\to \infty$.
Therefore enlarging $W$ allows us to
define integers $W_i$, independent of $s$ and $t$,
composed of primes $p\leq D_0$
such that if
$(s,t)\equiv (s_0,t_0)\md{W}$  
then $\Delta_i(s,t)^\dagger=|\Delta_i(s,t)|/W_i$.
Defining 
for all 
$(\b{d},\b{m}) \in \c{D} \times \c{M}$,
$\b{v} \in \l(\R_{\geq 1}\r)^n$
and ordered
$\bb{\psi} \in \{0,1\}^n$,
the entities 
\beq{def:qi}{Q_i:=\l(\frac{\c{A}_i B^{\deg(\Delta_i)}}{m_i^2W_i}\r)^{1/2},}
\[\c{K}:=
\left\{\b{k}\in \N^n:
\begin{array}{l} 
1\leq k_i \leq Q_i, \gcd(k_i,W)=1,\\ 
\gcd(k_i,d_jm_jk_j)=1, i\neq j
\end{array}
\right\},
\]
\[
\c{R}_{\bb{\psi}}\!\l(\b{v}\r)
=\bigcap_{i=1}^n
\Big\{\left(s,t\right) \in \R^2
\cap
B\mathcal{R}:
|\Delta_i(s,t)|\geq \psi_i v_i m_i^2 Q_iW_i
\Big\},
\]
and
\beq{def:omom}{\omega_{\bb{\psi}}\!\l(\b{v}\r)=\vol(\c{R}_{\bb{\psi}}\!\l(\b{v}\r)),}
we have the following result.
\begin{lemma}
[Hyperbola trick]
\label{hyper0}
For each
$(\b{d},\b{m}) \in \c{D} \times \c{M}$
we have
\[S_\b{d}(B;\b{m})=
\sum_{\bb{\psi} \in \{0,1\}^n} 
\sum_{\substack{\b{k} \in \c{K} \\
(1-\psi_i)k_i\neq Q_i\\
}} 
\sum_{\substack{
(s,t) \in \c{B}\cap \c{R}_{\bb{\psi}}(\b{k})\\
q_i|\Delta_i(s,t)}}
\
\prod_{i=1}^n
\l(\frac{F_i(s,t)}{k_i}\r)
,\]
where
$q_i=[d_i,k_im_i^2].$
\end{lemma}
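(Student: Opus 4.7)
The plan is to substitute the definition of $r_i(s,t;m_i^2)$ into \eqref{s d sum} and to interchange the order of summation, placing the $k_i$'s outside the $(s,t)$ sum. The divisor condition $k_i \mid \Delta_i(s,t)/m_i^2$ coming from the expansion of $r_i$ combines with the outer condition $[m_i^2,d_i] \mid \Delta_i(s,t)$ from \eqref{s d sum} into the single divisibility $q_i=[d_i,k_im_i^2] \mid \Delta_i(s,t)$, since $\gcd(k_im_i^2,W_i)=1$ (both $k_i$ and $m_i$ are coprime to $W$, and $W_i \mid W$). The resulting multi-sum then runs over $(s,t) \in \c{B}$ with $q_i \mid \Delta_i(s,t)$ and over divisors $k_i$ of $\Delta_i(s,t)^\dagger/m_i^2$ obeying the coprimality constraints of $\c{K}$.

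For each $i$ I would then apply the hyperbola trick at the threshold $Q_i$. Setting $k_i^\star := \Delta_i(s,t)^\dagger/(k_im_i^2)$, the definition \eqref{def:qi} of $Q_i$ together with the bound $|\Delta_i(s,t)| \leq \c{A}_i B^{\deg \Delta_i}$ valid on $\c{B}$ forces $k_i k_i^\star \leq Q_i^2$, so at least one of $k_i, k_i^\star$ does not exceed $Q_i$. The $\psi_i=0$ branch retains the range $k_i < Q_i$ directly, while the $\psi_i=1$ branch arises from the range $k_i \geq Q_i$ after performing the involution $k_i \leftrightarrow k_i^\star$ and renaming the dummy variable. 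After renaming, the inequality $k_i^\star \geq Q_i$ becomes $|\Delta_i(s,t)| \geq k_im_i^2Q_iW_i$, which is precisely the $i$-th defining inequality of $\c{R}_{\bb{\psi}}(\b{k})$ when $\psi_i=1$; the boundary value $k_i = Q_i$ is excluded from $\psi_i=0$ by the constraint $(1-\psi_i)k_i \neq Q_i$ and attributed instead to the $\psi_i=1$ branch.

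The Jacobi symbol survives the substitution thanks to Proposition \ref{let it be}: since $\bigl(\tfrac{F_i(s,t)}{\Delta_i(s,t)^\dagger}\bigr)=1$ we have $\gcd(F_i(s,t),\Delta_i(s,t)^\dagger)=1$, so multiplicativity applied to $\Delta_i(s,t)^\dagger = k_i k_i^\star m_i^2$ combined with $\bigl(\tfrac{F_i(s,t)}{m_i^2}\bigr)=1$ forces $\bigl(\tfrac{F_i(s,t)}{k_i^\star}\bigr)=\bigl(\tfrac{F_i(s,t)}{k_i}\bigr)$. The main obstacle will be checking that the substituted value $k_i^\star$ still lies in $\c{K}$: coprimality with $W$ is automatic from $k_i^\star \mid \Delta_i(s,t)^\dagger$, whereas coprimality with $d_j m_j k_j$ for $j \neq i$ is secured by enlarging $D_0$ to absorb every prime factor of $\mathrm{Res}(\Delta_i,\Delta_j)$, which forces any prime dividing $\Delta_i(s,t)$ to be coprime to $\Delta_j(s,t)$ and hence to each of $d_j, m_j, k_j$. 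A short verification then confirms that every original divisor of $\Delta_i(s,t)^\dagger/m_i^2$ appears in exactly one of the two branches, yielding the claimed identity.
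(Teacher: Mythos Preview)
Your proposal is correct and follows essentially the same route as the paper. The paper first rewrites $r_i(s,t;m_i^2)=r_i(s,t;m_i^2W_i)$ via the $p$-adic stability of $\nu_p(\Delta_i(s,t))$, then splits each $r_i$ as $r_i^{(0)}+r_i^{(1)}$ (with $r_i^{(0)}$ summing over $k_i<Q_i$ and $r_i^{(1)}$ summing the complementary divisor $k_i^*\le |\Delta_i(s,t)|/(Q_im_i^2W_i)$) using Proposition~\ref{let it be}, and finally injects this into~\eqref{s d sum}; you instead inject first and then perform the hyperbola splitting, but the substance and the use of Proposition~\ref{let it be} are identical. Your explicit verification that the substituted divisor $k_i^\star$ lands in $\c{K}$ (via the resultant argument already used in Proposition~\ref{soll}) is a point the paper leaves implicit.
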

\begin{proof}
The $p$-adic stability property alluded to earlier, reveals that 
for all $(s,t) \in \c{B}$ one has
$r_i(s,t;m_i^2)=r_i(s,t;m_i^2W_i)$,
and hence letting 
\[
r^{(0)}_i\!\l(s,t\r)
=\sum_{\substack{k_i| \frac{\Delta_i(s,t)}{m_i^2W_i}\\k_i<Q_i}}
\l(\frac{F_i(s,t)}{k_i}\r)
\
\
\text{and}
\
\
r^{(1)}_i\!\l(s,t\r)
=\sum_{\substack{k_i^*| \frac{\Delta_i(s,t)}{m_i^2W_i}\\k_i^*\leq \frac{|\Delta_i(s,t)|}{Q_im_i^2W_i}}}
\l(\frac{F_i(s,t)}{k_i^*}\r)
,\]
we are driven via
Proposition~\ref{let it be}
towards
\[r_i(s,t;m_i^2)=
\sum_{k_ik_i^*=\frac{|\Delta_i(s,t)|}{m_i^2W_i}}
\l(\frac{F_i(s,t)}{k_i}\r)=
r^{(0)}_i\!\l(s,t\r)
+r^{(1)}_i\!\l(s,t\r)
.\]
The proof of our lemma is furnished upon
injecting this equality into~\eqref{s d sum}
and noticing that 
$[[d_i,m_i^2],k_im_i^2]=[d_i,k_im_i^2]$
due to the fact that both $d_i$ and $m_i$ are squarefree integers.
\end{proof}  
Let us
define
for
$\bb{\psi} \in \{0,1\}^n,
\b{k}\in \c{K}$ and 
$\boldsymbol{\xi}\in \prod_{i=1}^n(\Z/q_i\Z)$
the set
\[
\c{L}_{\bb{\psi}}\!\l(\b{k},\boldsymbol{\xi}\r)
:=\#\left\{(s,t)\in \Zp^2\cap \c{R}_{\bb{\psi}}\!\l(\b{k}\r):
\begin{array}{l}
(s,t)\equiv (s_0,t_0) \md{W},\\
s\equiv \xi_i t \md{q_i}
\end{array}
\hspace{-0,1cm}
\right\}
.\]
\begin{lemma}
[Partitioning in lattices]
\lab{miles}
For each
$(\b{d},\b{m}) \in \c{D} \times \c{M}$
we have
\[S_\b{d}(B;\b{m})=
\sum_{\bb{\psi} \in \{0,1\}^n} 
\sum_{\substack{\b{k} \in \c{K} \\
(1-\psi_i)k_i\neq Q_i\\
}}  
\sum_{\substack{\xi_i\md{q_i}\\
\Delta_i(\xi_i,1)\equiv 0 \md{q_i} 
}}
\prod_{i=1}^n
\l(\frac{F_i(\xi_i,1)}{k_i}\r)
\c{L}_{\bb{\psi}}\!\l(\b{k},\boldsymbol{\xi}\r)
.\]
\end{lemma}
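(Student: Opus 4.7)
The goal is to rewrite the inner summation over $(s,t)$ in Lemma~\ref{hyper0} by partitioning according to the residue class $\xi_i \equiv s t^{-1} \md{q_i}$ for each $i$. Once this is done, the Jacobi symbol $\l(\frac{F_i(s,t)}{k_i}\r)$ depends only on $\xi_i$, so it can be pulled outside the counting function, which is exactly the claimed identity. The argument amounts to three short reductions; no delicate analysis is required.

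\textbf{Step 1: $t$ is invertible modulo $q_i$.} For any prime $p\mid q_i=[d_i,k_im_i^2]$ we have $p>D_0$, since $\gcd(d_i,W)=\gcd(m_i,W)=\gcd(k_i,W)=1$ by the definitions of $\c{D},\c{M},\c{K}$. In Lemma~\ref{hyper0} each such $p$ divides $\Delta_i(s,t)$ (because $d_i\mid\Delta_i(s,t)$ and, from the derivation of the lemma via $r_i^{(0)},r_i^{(1)}$, both $k_i$ and the dual variable divide $\Delta_i(s,t)/(m_i^2 W_i)$). Since $D_0$ has been enlarged so that $p>D_0$ implies $p\nmid\Delta_i(1,0)$, the homogeneity relation $\Delta_i(s,0)=\Delta_i(1,0)s^{\deg\Delta_i}$ combined with $\gcd(s,t)=1$ forces $p\nmid t$.

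\textbf{Step 2: Translation of the conditions.} Define $\xi_i:=st^{-1}\md{q_i}$. By homogeneity of $\Delta_i$ we have $\Delta_i(s,t)\equiv t^{\deg\Delta_i}\Delta_i(\xi_i,1)\md{q_i}$, so the condition $q_i\mid\Delta_i(s,t)$ is equivalent to $\Delta_i(\xi_i,1)\equiv 0\md{q_i}$. Similarly, from $k_i\mid q_i$ and $\gcd(t,k_i)=1$,
\[
F_i(s,t)\equiv t^{\deg F_i}\,F_i(\xi_i,1)\md{k_i}.
\]
Since $\deg F_i$ is even, $t^{\deg F_i}$ is a square in $(\Z/k_i\Z)^\times$, and multiplicativity of the Jacobi symbol yields $\l(\frac{F_i(s,t)}{k_i}\r)=\l(\frac{F_i(\xi_i,1)}{k_i}\r)$, a quantity depending solely on $\xi_i$.

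\textbf{Step 3: Combining classes.} The coprimality built into $\c{D},\c{M}$ and $\c{K}$ (namely $\gcd(d_i,d_j)=\gcd(m_i,m_j)=1$ and $\gcd(k_i,d_jm_jk_j)=1$ for $i\neq j$) implies $\gcd(q_i,q_j)=1$ for $i\neq j$. Hence the $n$ congruences $s\equiv\xi_i t\md{q_i}$ can be imposed independently. Grouping the inner sum in Lemma~\ref{hyper0} by $(\xi_1,\ldots,\xi_n)$, using that $\c{B}\cap\c{R}_{\bb{\psi}}(\b{k})$ amounts exactly to the archimedean constraint $(s,t)\in\c{R}_{\bb{\psi}}(\b{k})$ together with $(s,t)\in\Zp^2$ and $(s,t)\equiv(s_0,t_0)\md{W}$, the number of surviving pairs is precisely $\c{L}_{\bb{\psi}}(\b{k},\bb{\xi})$. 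This yields the desired identity. The only subtle point in the argument is Step~1, where one needs to invoke the freedom to enlarge $D_0$ in order to rule out primes dividing the leading coefficients of the $\Delta_i$; beyond this, the proof is routine bookkeeping.
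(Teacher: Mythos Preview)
Your proof is correct and follows essentially the same route as the paper's own argument: both hinge on the observation that $t$ is a unit modulo $q_i$ (because every prime divisor of $q_i$ exceeds $D_0$, divides $\Delta_i(s,t)$, and does not divide $\Delta_i(1,0)$), after which the substitution $\xi_i\equiv st^{-1}\md{q_i}$ and the parity of $\deg F_i$ make the Jacobi symbol depend only on $\xi_i$. Your Step~3 on the pairwise coprimality of the $q_i$ is true but not actually needed here (the definition of $\c{L}_{\bb{\psi}}(\b{k},\bb{\xi})$ already imposes the $n$ congruences separately, and $\xi_i$ is uniquely determined by $(s,t)$ since $t$ is invertible); that coprimality only becomes relevant in the subsequent lattice-point count.
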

\begin{proof}

As in the beginning of the proof of 
Lemma~\ref{soll}, one sees that
$\gcd(q_i,W)=1$ and
$q_i|\Delta_i(s,t)$ 
imply that
$s/t\md{q_i}$
is well-defined.
We deduce that for all $(s,t)$ appearing in the statement of Lemma~\ref{hyper0}
we have
$s\equiv \xi_i t\md{q_i}$ for all $i=1,\ldots, n$,
where $\xi_i$ is an integer satisfying the condition
$\Delta_i(\xi_i,1)\equiv 0
\md{q_i}$.
The property
$k_i|q_i$ 
validates the equality
\[
\l(\frac{F_i(s,t)}{k_i}\r)=
\l(\frac{F_i(\xi_i t,t)}{k_i}\r)
=
\l(\frac{t^{\deg(F_i)}
F_i(\xi_i,1)}{k_i}\r)
,\]
and 
the fact that 
$2|\deg(F_i)$ shows that 
\[\l(\frac{t^{\deg(F_i)}
F_i(\xi_i,1)}{k_i}\r)
=
\l(\frac{
F_i(\xi_i,1)}{k_i}\r),\]
an observation which concludes our proof.
\end{proof}
\subsection{Point counting in primitive lattices} 
We call a lattice $G\subset \Z^2$ 
\textit{primitive}
if the only integers fulfilling
$G\subset\delta \Z^2$
are $\delta=\pm 1$.
\begin{lemma}
\lab{lattice 2}
Let 
$\b{A}\in \Z^{2 \times 2}$
be an upper triangular matrix
of non-zero determinant
and 
consider the lattice given by
$G=\{\b{A}\b{y}:\b{y}\in \Z^2\}$.
We shall let its  
determinant 
and
first successive minimum
be denoted by 
$\det(G)$
and
$\lambda_1(G)$ respectively.
Assume that $\c{V}\subset \R^2$ is a bounded measurable set whose boundary is piecewise differentiable
and
that $\b{x}_0 \in \Z^2$ and 
$q \in \Z$
are such that 
$\gcd(\b{x}_0,q)=1$
and 
$\gcd\l(\det(G),q\r)=1$.
\vskip 0.2cm 
$(1)$
The estimate
\[\#\Big\{\b{x}\in \c{V}\cap G:\b{x}\equiv \b{x}_0 \md{q}\Big\}=
\frac{\vol(\c{V})}{\det(G)}\frac{1}{q^2}
+O\!\l(1+\frac{\partial(\c{V})}{\lambda_1(G) q}\r)
\] holds
with an absolute
implied constant.

$(2)$
If  $G$ is primitive then the quantity
$\#\Big\{\b{x}\in \Zp^2 \cap \c{V}\cap G:\b{x}\equiv \b{x}_0 \md{q}\Big\}$
equals \[ 
\frac{\vol(\c{V})}{\zeta(2)\det(G)q^2}
\prod_{p | \det(G)} 
\l(1+\frac{1}{p}\r)^{-1}
\prod_{p | q} 
\l(1-\frac{1}{p^2}\r)^{-1}
\]
up to an error
\[
\ll
\frac{\tau(\det(G))}{\lambda_1(G)}
\l(
\|\c{V}\|_\infty 
+
\frac{\partial(\c{V})}{q}\log\l(1+\|\c{V}\|_\infty\r)
+
\frac{\vol(\c{V})}{q^2}
\frac{1}{1+\|\c{V}\|_\infty}
\r),
\]
with an absolute implied constant.
\end{lemma}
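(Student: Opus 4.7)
The plan is to handle the two parts separately, each resting on a Davenport-type lattice point count; part (2) additionally uses Möbius inversion to detect primitivity.

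For part (1), since $\gcd(\det G,q)=1$ the reduction map $G\to(\Z/q\Z)^2$ is surjective, so $G\cap(\b{x}_0+q\Z^2)$ is non-empty and forms a single coset of $G\cap q\Z^2$; moreover the coprimality forces $G\cap q\Z^2=qG$, a lattice of determinant $q^2\det(G)$ and first successive minimum $q\lambda_1(G)$. Applying the classical estimate
\[
\#(\c{V}\cap(\b{w}+\Lambda))=\frac{\vol(\c{V})}{\det(\Lambda)}+O\!\l(1+\frac{\partial(\c{V})}{\lambda_1(\Lambda)}\r)
\]
to $\Lambda=qG$ and an appropriate translate $\b{w}$ delivers the claim of (1) directly.

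For part (2), I would detect primitivity by M\"{o}bius inversion and write
\[
\#\{\b{x}\in\Zp^2\cap\c{V}\cap G:\b{x}\equiv\b{x}_0\md{q}\}=\sum_{\delta\geq1}\mu(\delta)N_\delta,
\]
where $N_\delta$ counts $\b{x}\in G\cap\delta\Z^2\cap\c{V}$ satisfying the congruence. Only squarefree $\delta$ coprime to $q$ survive, and since every non-zero vector of $G_\delta:=G\cap\delta\Z^2$ has sup norm at least $\delta$, only $\delta\leq\|\c{V}\|_\infty$ contribute. Primitivity of $G$ ensures $\Z^2/G$ is cyclic of order $d:=\det G$, from which a direct computation gives $[G:G_\delta]=\delta^2/\gcd(\delta,d)$ for squarefree $\delta$, hence $\det G_\delta=\delta^2 d/\gcd(\delta,d)$ and $\lambda_1(G_\delta)\geq\max(\delta,\lambda_1(G))$. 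Since $\gcd(\det G_\delta,q)=1$, part (1) applies to each $G_\delta$ and yields
\[
N_\delta=\frac{\vol(\c{V})\gcd(\delta,d)}{\delta^2 q^2 d}+O\!\l(1+\frac{\partial(\c{V})}{q\lambda_1(G_\delta)}\r).
\]
Summing the main terms over squarefree $\delta$ coprime to $q$ produces the Euler product $\prod_{p\nmid q}(1-\gcd(p,d)/p^2)$, which using $\gcd(d,q)=1$ rearranges precisely into $\zeta(2)^{-1}\prod_{p\mid d}(1+1/p)^{-1}\prod_{p\mid q}(1-1/p^2)^{-1}$, matching the stated main term.

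The error then splits into three pieces: the tail $\sum_{\delta>\|\c{V}\|_\infty}$ of the main-term series, controlled via $\sum_{\delta>N}\gcd(\delta,d)/\delta^2\ll\tau(d)/N$, produces the $\vol(\c{V})/(q^2(1+\|\c{V}\|_\infty))$ piece; the ``$1$'' error of part (1) summed over admissible $\delta$ gives the $\|\c{V}\|_\infty$ piece; and the boundary errors $\partial(\c{V})/(q\lambda_1(G_\delta))$, summed using $\lambda_1(G_\delta)\geq\delta$, give the $\partial(\c{V})\log(1+\|\c{V}\|_\infty)/q$ piece. Grouping $\delta$ according to $e:=\gcd(\delta,d)$ introduces a divisor sum over $e\mid d$ that generates the factor $\tau(\det G)$, and the uniform lower bound $\lambda_1(G_\delta)\geq\lambda_1(G)$ allows one to extract the common prefactor $1/\lambda_1(G)$. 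The main obstacle will be the fine-tuning of this error bookkeeping: verifying that the three pieces assemble cleanly into the stated form with exactly the prefactor $\tau(\det G)/\lambda_1(G)$, without spurious divisor or logarithmic losses. Everything else reduces to an organised combination of Davenport's lattice-point estimate with M\"{o}bius inversion.
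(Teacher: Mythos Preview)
Your treatment of part (1) is correct and agrees with the paper's.

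For part (2) the main-term computation is correct, but the error bookkeeping has a real gap. The two lower bounds you invoke, $\lambda_1(G_\delta)\geq\delta$ and $\lambda_1(G_\delta)\geq\lambda_1(G)$, are each too weak to produce the prefactor $\tau(\det G)/\lambda_1(G)$. For instance, the ``$1$'' errors from part (1), summed over squarefree $\delta\leq\|\c{V}\|_\infty$, contribute of order $\|\c{V}\|_\infty$, not $\tau(d)\,\|\c{V}\|_\infty/\lambda_1(G)$; similarly the boundary errors, bounded via $\lambda_1(G_\delta)\geq\delta$ alone, yield $\partial(\c{V})\log(1+\|\c{V}\|_\infty)/q$ with no factor $1/\lambda_1(G)$. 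Grouping by $e=\gcd(\delta,d)$ and then appealing to $\lambda_1(G_\delta)\geq\lambda_1(G)$ does not repair this: the $1/\lambda_1(G)$ must come from a sharper restriction on the range of $\delta$, not from a lower bound applied after the summation.

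The missing ingredient is the identity $G_\delta=(\delta/e)\,G_e$ for squarefree $\delta$ with $e=\gcd(\delta,d)$, which holds because $\gcd(\delta/e,d)=1$ together with primitivity of $G$ forces $G\cap(\delta/e)\Z^2=(\delta/e)G$. This gives $\lambda_1(G_\delta)\geq(\delta/e)\lambda_1(G)$ and hence the correct cutoff $\delta/e\leq\|\c{V}\|_\infty/\lambda_1(G)$; summing over $e\mid d$ and over $\delta/e$ in this range then delivers all three error pieces with the stated prefactor. The paper achieves exactly this effect by splitting the M\"obius variable at the outset into $k\mid\det G$ and $m$ coprime to $\det(G)q$: since $\gcd(m,\det G)=1$ gives $m\mid\b{A}\b{y}\Leftrightarrow m\mid\b{y}$, the region rescales to $\c{V}/m$ and the cutoff $m\leq\|\c{V}\|_\infty/\lambda_1(G)$ falls out automatically. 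Your single-$\delta$ organisation becomes equivalent once $G_\delta=(\delta/e)G_e$ is in hand, but as written the claimed error bound is not justified.
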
 
\begin{proof}
$(1)$
Defining $\b{y}_0$
through
$\det(G)\b{y}_0\equiv \b{A}^{\mathrm{adj}}\b{x}_0\md{q}$
and letting $\b{y}=\b{y}_0+q\b{z}$
we see that the quantity
under consideration is
$\#\{\b{z}\in \Z^2: \b{A}\b{z} \in (\c{V}-\b{A}\b{y}_0)/q\}$.
Our proof is then concluded by
observing that 
the region $(\c{V}-\b{A}\b{y}_0)/q$
has volume $\frac{\vol(\c{V})}{q^2}$
and length of boundary $\frac{\partial(\c{V})}{q}$.
\newline
$(2)$ We shall deploy M\"{o}bius inversion to detect the coprimality condition
and
due to
$\gcd(\b{x},q)=1$ and
$\b{x}\equiv \b{x}_0 \md{q}$,
we only need to ensure that
$\b{x}$ is coprime to integers coprime to $q$.
Sieving out multiples of $\det(G)q$, 
we see that the quantity under consideration equals
\[
\sum_{\substack{m \in \N\\ \gcd(m,\det(G)q)=1}} \hspace{-0,5cm} \mu(m)
\#\Big\{\b{y}\in \Z^2:m|\b{M}\b{y}, \gcd(\b{A}\b{y},\det(G))=1, \b{y}\equiv \b{y}_0 \md{q}, 
\b{A}\b{y} \in \c{V}\Big\}
.\]
Due to $\gcd(m,\det(G))=1$,
the condition
$m|\b{A}\b{y}$
is equivalent to
$m|\b{y}$
and, letting $\b{y}=m\b{z}$, we obtain
\[
\sum_{\gcd(m,\det(G)q)=1} \hspace{-0,5cm} \mu(m)
\#\Big\{\b{z}\in \Z^2:\gcd(\b{A}\b{z},\det(G))=1, \b{z}\equiv \overline{m} \b{y}_0 \md{q}, 
\b{A}\b{z} \in \c{V}/m\Big\}
.\]
Notice that in order for the inner quantity to be non-zero 
we must have
$\lambda_1(G) \leq  \|\c{V}\|_\infty/m.$
Removing the condition $\gcd(\b{A}\b{z},\det\!\l(G\r))=1$
reveals that the quantity under consideration equals
\[
\sum_{k|\det(G)}\mu(k)
\sum_{\substack{m \leq  \|\c{V}\|_\infty /\lambda_1(G) \\ \gcd(m,\det(G)q)=1}} \hspace{-0,5cm} \mu(m)
\#\Big\{\b{z}\in \Z^2: 
k|\b{A}\b{z},
\b{z}\equiv \overline{m}  \b{y}_0 \md{q}, \b{A}\b{z} \in \frac{\c{V}}{m}\Big\}.
\]
The set
$G(k):=
\Big\{\b{A}\b{z}:\b{z}\in \Z^2, 
k|\b{A}\b{z}\Big\}
$
is a sublattice of
 $G=\Big\{\b{A}\b{z}:\b{z}\in \Z^2\Big\}$,
whose first successive minimum
$\lambda_1(G(k))$ satisfies
$k\lambda(G)\leq \lambda(G(k))$,
since if $\b{v}$
is a minimal non-zero integer vector of
$G(k)$ then $k|\b{v}$ and furthermore $\b{v}/k \in G$.
Assume that $\b{A}$ is given by
\begin{equation}
\lab{herm}
\b{A}=\left( \begin{array}{cc}
a_1 & a_2  \\
0 & a_3 \end{array} \right).
\end{equation}
We shall prove 
$\det\!\l(G(k)\r)=k\det\!\l(G\r)$
by making use of~\eqref{herm}.
Indeed, factorising the squarefree integer $k$,
known to divide $\det\!\l(G\r)$,
as
$k=k_1k_2$,
where $k_1$ has all of the prime factors of $k$
dividing
$a_1$, and using the primitivity of the lattice,
makes apparent 
that the property $k|\b{A}\b{z}$ is equivalent to
\[k_1|z_2
\
\
\&
\
\
z_1\equiv 
b
z_2 \md{k_2},
\]
where
$a_1b\equiv a_2\md{k_2}$.
Writing
$z_2=k_1u_1$ and
$z_1=b k_1u_1+k_2 u_2$
for some $\b{u} \in \Z^2$,
and bringing into play the matrix
\[
\b{A}'
:=\left( \begin{array}{cc}
(a_1b+a_2)k_1 & a_1k_2  \\
a_3k_1 & 0 \end{array} \right),
\]
gives birth to
$
\b{A}\b{z}=
\b{A}'
\b{u}
$. This equality allows us to
acquire
the desired result
\[
\det\l(G(k)\r)=\det\l(\b{A}'\r)=k
\det(G).
\]
We next have
\begin{align*}
&\#\Big\{\b{z}\in \Z^2: 
k|\b{A}\b{z},
\b{z}\equiv \overline{m}  \b{y}_0 \md{q}, \b{A}\b{z} \in \c{V}/m\Big\}
\\
=&\#\Big\{\b{u}\in \Z^2: 
\b{u}\equiv 
\b{A}
{\b{A}'}^{\mathrm{adj}}
\overline{a_1a_3km}  \b{y}_0 \md{q}, \b{A}'\b{u} \in \c{V}/m\Big\}
\end{align*}
and therefore
the first part of the present lemma confirms
that  
\begin{align*}
&\#\Big\{\b{x}\in \Zp^2:\b{x}\equiv \b{x}_0 \md{q}, \b{x} \in \c{V}\Big\}
\\
=&\sum_{k|\det(G)}\mu(k)
\sum_{\substack{m \leq  \|\c{V}\|_\infty /\lambda_1(G) \\ \gcd(m,\det(G)q)=1}} \hspace{-0,5cm} \mu(m)
\l(
\frac{\vol(\c{V})}{k\det\!\l(G\r)}\frac{1}{m^2q^2}
+O\!\l(1+\frac{\partial(\c{V})}{\lambda_1(G(k)) mq}\r)
\r)
.\end{align*}
Using the inequality 
$k\lambda_1(G)\leq \lambda_1(G(k))$
shows that the error term above is
\begin{align*}
&\ll
\sum_{k|\det(G)} 
\sum_{m \leq  \|\c{V}\|_\infty /\lambda_1(G)}
\l(1+\frac{\partial(\c{V})}{\lambda_1(G) qkm}\r)
\\
&=\tau(\det(G))\frac{\|\c{V}\|_\infty}{\lambda_1(G)}
+\tau(\det(G))
\frac{\partial(\c{V})}{\lambda_1(G) q}\log\l(1+\|\c{V}\|_\infty\r),
\end{align*}
while the main term equals
\begin{align*}
&\frac{\vol(\c{V})}{\det\!\l(G\r)q^2}
\sum_{k|\det(G)} \frac{\mu(k)}{k}
\hspace{-0,5cm}
\sum_{\substack{m \leq  \|\c{V}\|_\infty /\lambda_1(G) \\ \gcd(m,\det(G)q)=1}} \hspace{-0,5cm} 
\frac{\mu(m)}{m^2}=\\
&\frac{\vol(\c{V})}{\zeta(2)\det\!\l(G\r)q^2}
\sum_{k|\det(G)} \frac{\mu(k)}{k}
\prod_{p | \det(G)q} 
\l(1-\frac{1}{p^2}\r)^{-1}
+O\!\l(
\frac{\vol(\c{V})}{\det\!\l(G\r)}
\frac{\lambda_1(G)\tau(\det(\Lambda))}{q^2 \|\c{V}\|_\infty}
\r),
\end{align*}
hence,
utilising $\frac{1}{\det\l(G\r)}\ll \frac{1}{\lambda_1(G)^2}$
delivers the proof of our claim. 
\end{proof}
\subsection{Error term}
Recall the definition of 
$\omega_{\bb{\psi}}$
in~\eqref{def:omom}
and define the multiplicative function 
\[f_0(k)=\prod_{p|k}\l(1+\frac{1}{p}\r)^{-1}.\]
\begin{lemma}
\lab{laatt}
There exists 
$c_0>0$
such that  
\[
\c{L}_{\bb{\psi}}\!\l(\b{k},\boldsymbol{\xi}\r)
=
c_0 \
\omega_{\bb{\psi}}(\b{k})
\prod_{i=1}^n 
\frac{f_0(m_ik_id_i)\gcd(d_i,k_im_i)
}{m_i^2k_id_i} 
+O\!\l(\frac{\tau(
m_1^2d_1k_1
\ldots
m_n^2d_nk_n
)}{\lambda_1(G_{\b{k}})}
B \log B
\r),
\]
where $G_{\b{k}}$ denotes the lattice
\[
G_{\b{k}}
:=\Big\{(s,t)\in \Z^2:s\equiv \xi_i t \md{k_i}
\
\text{for all}
\
i=1,\ldots,n\Big\}.\]
\end{lemma}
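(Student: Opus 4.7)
The plan is to reduce the lemma to a single application of Lemma~\ref{lattice 2}(2). The first step is to fuse the $n$ congruences $s\equiv\xi_i t\md{q_i}$ into a single one. The coprimality conditions built into the definitions of $\c{D}$, $\c{M}$, and $\c{K}$ imply that the integers $d_i,k_i,m_i$ with distinct indices are pairwise coprime, hence the moduli $q_i=[d_i,k_im_i^2]$ form a pairwise coprime family. The Chinese Remainder Theorem therefore produces a unique residue $\Xi\md{Q}$, with $Q:=\prod_i q_i$, such that the conjunction of the congruences is equivalent to $s\equiv\Xi t\md{Q}$. Writing $\Lambda:=\{(s,t)\in\Z^2:s\equiv\Xi t\md{Q}\}$, one observes that $(\Xi,1)\in\Lambda$ has second coordinate $1$, so $\Lambda$ is primitive, while the basis $\{(Q,0),(\Xi,1)\}$ gives $\det(\Lambda)=Q$.

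Next I would apply Lemma~\ref{lattice 2}(2) with $G=\Lambda$, modulus $q=W$, region $\c{V}=\c{R}_{\bb{\psi}}(\b{k})$, and $\b{x}_0=(s_0,t_0)$. The required coprimalities are immediate: $\gcd(Q,W)=1$ follows from $\gcd(d_ik_im_i,W)=1$, and $\gcd(\b{x}_0,W)=1$ follows from $(s_0,t_0)\in\Zp^2$. The resulting main term reads
\[
\frac{\vol(\c{R}_{\bb{\psi}}(\b{k}))}{\zeta(2)\,Q\,W^2}\prod_{p\mid Q}\l(1+\tfrac{1}{p}\r)^{-1}\prod_{p\mid W}\l(1-\tfrac{1}{p^2}\r)^{-1}.
\]
To bring this into the form claimed by the lemma, one checks prime-by-prime, using the squarefreeness of $d_i$ and $m_i$, that $q_i=d_ik_im_i^2/\gcd(d_i,k_im_i)$, so that $1/Q=\prod_i\gcd(d_i,k_im_i)/(d_ik_im_i^2)$. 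Moreover, the prime support of each $q_i$ coincides with that of $d_ik_im_i$, so the coprimality of the $q_i$'s gives $\prod_{p\mid Q}(1+1/p)^{-1}=\prod_i f_0(d_ik_im_i)$. Recalling $\omega_{\bb{\psi}}(\b{k})=\vol(\c{R}_{\bb{\psi}}(\b{k}))$ from \eqref{def:omom} and defining $c_0:=\zeta(2)^{-1}W^{-2}\prod_{p\mid W}(1-1/p^2)^{-1}$ yields precisely the stated main term.

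Finally I would bound the error. The estimates $\|\c{R}_{\bb{\psi}}(\b{k})\|_\infty\ll B$, $\partial(\c{R}_{\bb{\psi}}(\b{k}))\ll B$, $\vol(\c{R}_{\bb{\psi}}(\b{k}))\ll B^2$, together with $W=O(1)$, collapse the three pieces of the error in Lemma~\ref{lattice 2}(2) to $\tau(\det(\Lambda))\lambda_1(\Lambda)^{-1}B\log B$. The inclusion $\Lambda\subseteq G_{\b{k}}$ is clear since $k_i\mid q_i$, and it gives $\lambda_1(\Lambda)\geq\lambda_1(G_{\b{k}})$; meanwhile $Q\mid\prod_i d_ik_im_i^2$ gives $\tau(Q)\leq\tau(m_1^2d_1k_1\cdots m_n^2d_nk_n)$. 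These two replacements convert the Lemma~\ref{lattice 2}(2) error into the advertised form.

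The only non-routine aspect is the combinatorial matching in the second paragraph: verifying that the Euler factor $\prod_{p\mid Q}(1+1/p)^{-1}$ produced by Lemma~\ref{lattice 2}(2) factorises cleanly as $\prod_i f_0(d_ik_im_i)$, together with the prime-by-prime identity $q_i=d_ik_im_i^2/\gcd(d_i,k_im_i)$. Once these identities are in place, every remaining step is a direct substitution.
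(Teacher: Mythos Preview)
Your argument is correct and is essentially the paper's own proof: merge the congruences via CRT into a single primitive lattice of determinant $Q=\prod_i q_i$, apply Lemma~\ref{lattice 2}(2) with modulus $W$, and then identify the main term through $q_i=m_i^2k_id_i/\gcd(d_i,k_im_i)$ and $f_0(q_i)=f_0(m_ik_id_i)$. The one place to tighten is the third error piece $\vol(\c{V})/(1+\|\c{V}\|_\infty)$: your listed bounds $\|\c{V}\|_\infty\ll B$ and $\vol(\c{V})\ll B^2$ alone do not force this ratio to be $\ll B$; the paper supplies the missing ingredient by noting that $\|\c{R}_{\bb{\psi}}(\b{k})\|_\infty\gg B$ whenever the region is nonempty (equivalently, one may use the trivial $\vol(\c{V})\le 4\|\c{V}\|_\infty^2$).
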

\begin{proof} 
By the Chinese remainder theorem 
there exists an integer $\xi$ such that
$q_i|\xi-\xi_i$
for all $i=1,\ldots,n$.
This shows that 
$(\xi,1)$
is on the lattice
$G_{\b{q}}$,
which is therefore
primitive.
Observing that the condition
$s\equiv \xi t\md{q_1\cdots q_n}$
is equivalent to
$(s,t)$ lying within
$\{\b{A}\b{y}:\b{y}\in \Z^2\}$
with
\[
\b{A}=\left( \begin{array}{cc}
q_1\cdots q_n & \xi  \\
0 & 1 \end{array} \right),
\]
allows us to
use part $(2)$ of
Lemma~\ref{lattice 2}
to evaluate 
$\c{L}_{\bb{\psi}}\!\l(\b{k},\boldsymbol{\xi}\r)$.
The main term will be 
\[
\frac{
\omega_{\bb{\psi}}\!\l(\b{k}\r)
}{\zeta(2)\det(G_{\b{q}})W^2}
\prod_{p | \det(G_{\b{q}})} 
\l(1+\frac{1}{p}\r)^{-1}
\prod_{p | W} 
\l(1-\frac{1}{p^2}\r)^{-1}
,\]
and once we define
$c_0:=W^{-2}\prod_{p \nmid  W} \l(1-\frac{1}{p^2}\r)$
and use that 
$\det(G_{\b{q}})=q_1\cdots q_n$,
we see that it equals
$
c_0 
\omega_{\bb{\psi}}(\b{k})
\frac{f_0(q_1)}{q_1} 
\cdots
\frac{f_0(q_n)}{q_n}
$.
To show that this is the main term stated in our lemma, observe that 
the integers $d_i$ and $m_i$ are squarefree,
and hence
$q_i=m_i^2k_id_i/\gcd(d_i,k_im_i)$,
which implies that
$f_0(q_i)=f_0(m_ik_id_i)$.

By part $(2)$ of
Lemma~\ref{lattice 2}
we see that the error term
in the present lemma is
\[
\ll \frac{\tau(q_1 \cdots q_n)}{\lambda_1(G_\b{q})}
\l(\|\c{R}_{\bb{\psi}}\!\l(\b{k}\r)\|_\infty 
+
\frac{\log\l(1+\|\c{R}_{\bb{\psi}}\!\l(\b{k}\r)\|_\infty\r)}{W\partial(\c{R}_{\bb{\psi}}\!\l(\b{k}\r))^{-1}}
+
\frac{\vol(\c{R}_{\bb{\psi}}\!\l(\b{k}\r))}{W^2(1+\|\c{R}_{\bb{\psi}}\!\l(\b{k}\r)\|_\infty)}
\r),\]
with an absolute implied constant.
The inequality 
$
\vol(\c{R}_{\bb{\psi}}\!\l(\b{k}\r))/(1+\|\c{R}_{\bb{\psi}}\!\l(\b{k}\r)\|_\infty)
\ll  B
$
is valid when $\c{R}_{\bb{\psi}}\!\l(\b{k}\r)$ is empty. 
In the opposite case,
there exists some $\b{x} \in 
\c{R}_{\bb{\psi}}\!\l(\b{k}\r)
\subset
B\c{R}$, which must necessarily be non-zero,
and hence
$\|\b{x}\|_\infty
\geq B
\inf\{\|\b{y}\|_{\infty}:\b{y} \in \c{R}\}
\gg  B$.
This implies that 
$\|\c{R}_{\bb{\psi}}\!\l(\b{k}\r)\|_\infty \gg  B$,
and therefore 
$\c{R}_{\bb{\psi}}(\b{k})\subset B\c{R}$
confirms the validity of
\[
\frac{\vol(\c{R}_{\bb{\psi}}\!\l(\b{k}\r))}{1+\|\c{R}_{\bb{\psi}}\!\l(\b{k}\r)\|_\infty}
\leq 
\frac{\vol(B\c{R})}{1+\|\c{R}_{\bb{\psi}}\!\l(\b{k}\r)\|_\infty}
\ll B.
\] 
The region $\c{R}_{\bb{\psi}}(\b{k})$
is a subset of the box
$B\c{R}$
which is cut out by at most 
$6$ planar curves, each of degree at most $3$, namely 
the curves given by
$|\Delta_i(s,t)|=
\psi_i k_i e_i Q_iW_i.$
This fact reveals that 
$
\partial(
\c{R}_{\bb{\psi}}(\b{k})
) \ll \partial\!\l(B\c{R}\r)\ll B$
and hence 
\[\|\c{R}_{\bb{\psi}}\!\l(\b{k}\r)\|_\infty 
+
\frac{\partial(\c{R}_{\bb{\psi}}\!\l(\b{k}\r))}{W}\log\l(1+\|\c{R}_{\bb{\psi}}\!\l(\b{k}\r)\|_\infty\r)
\ll
B \log B.\]
To finish 
our proof we
observe that 
$\lambda_1(G_\b{k})\geq \lambda_1(G_\b{q})$
can be inferred from $k_i|q_i$.
\end{proof}
Define for all
$(\b{d},\b{m}) \in \c{D} \times \c{M}$,
$\bb{\psi} \in \{0,1\}^n$,
the squarefree integers 
\[
d_i':=\frac{d_i}{\gcd(d_i,m_i)}, 
m_i':=\frac{m_i}{\gcd(d_i,m_i)},
d_i'':=\prod_{\substack{p|d_i \\ p\nmid m_ik_i}}p,
m_i'':=\prod_{\substack{p|m_i\\p\nmid k_i}}p
\] 
and let
\beq{main t}{
T_{\b{d},\bb{\psi}}(B;\b{m})
:=
\sum_{\substack{\b{k} \in \c{K} \\
(1-\psi_i)k_i\neq Q_i
}} 
\hspace{-0,4cm} 
\omega_{\bb{\psi}}\!\l(\b{k}\r)
\prod_{i=1}^n 
\frac{\varrho_i(k_i)
}{k_i}
f_0(k_i,d_i m_i)
\gcd(k_i,d_i')
\tau_i(d_i''m_i'')
.}
The proof of the following result is inspired by an argument of Daniel~\cite[Lem. 3.2]{stephan}.
\begin{lemma}
\lab{prepar+}
For each
$(\b{d},\b{m}) \in \c{D} \times \c{M}$
and $\varepsilon>0$,
we have 
\[S_{\b{d}}(B;\b{m})=c_0
\l(\prod_{i=1}^n
\frac{f_0(d_im_i)}{d_im_i^2}
\gcd(d_i,m_i)
\r)
\l(\sum_{\bb{\psi} \in \{0,1\}^n}
T_{\b{d},\bb{\psi}}(B;\b{m})\r)
+O_{\varepsilon}\!\l(B^{\frac{7}{4}+\varepsilon}\r)
.\] 
\end{lemma}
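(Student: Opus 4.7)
The plan is to feed the asymptotic of Lemma~\ref{laatt} into the expression for $S_{\b{d}}(B;\b{m})$ supplied by Lemma~\ref{miles}. This decomposes the divisor sum as a volume-weighted character sum in $\boldsymbol{\xi}$ and $\b{k}$ (the main term) plus an error controlled by $\lambda_1(G_{\b{k}})^{-1}$. The proof then splits into three pieces: evaluating the inner character sum, matching the resulting main term with $T_{\b{d},\bb{\psi}}$, and bounding the error by $O_\varepsilon(B^{7/4+\varepsilon})$.

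For the main term, the moduli $q_i = [d_i, k_i m_i^2]$ are pairwise coprime, a consequence of the cross-coprimality conditions in $\c{D}$, $\c{M}$, and $\c{K}$. By the Chinese remainder theorem and Hensel's lemma (valid since $D_0$ is chosen large enough), the inner sum $\sum_{\boldsymbol{\xi}} \prod_i (F_i(\xi_i,1)/k_i)$ factors over $i$ and then over primes $p \mid q_i$: each root $\xi_0 \md p$ of $\Delta_i(x,1)$ lifts uniquely to the relevant power of $p$, and the Jacobi symbol depends only on $\xi_0 \md p$. Primes $p \mid k_i$ therefore contribute $\varrho_i(p^{\nu_p(k_i)})$, while primes $p \mid q_i$ with $p \nmid k_i$ contribute $\tau_i(p)$; multiplied together these deliver $\varrho_i(k_i)\tau_i(d_i'' m_i'')$, since $d_i'' m_i''$ is by construction the product of primes dividing $q_i$ but not $k_i$. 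Afterwards I verify the multiplicative identity
\[
\frac{f_0(m_i k_i d_i)\gcd(d_i,k_i m_i)}{m_i^2 k_i d_i}
= \frac{f_0(d_i m_i)\gcd(d_i,m_i)}{d_i m_i^2}
\cdot \frac{f_0(k_i,d_i m_i)\gcd(k_i,d_i')}{k_i}
\]
by running through the eight cases indexed by whether a given prime $p$ divides $d_i, m_i, k_i$ (using that $d_i, m_i$ are squarefree, while $f_0$ and each $\gcd$ on the right depend only on the radical of $k_i$). Combined with the $\boldsymbol{\xi}$ evaluation this rewrites the main term as $c_0 \prod_i \frac{f_0(d_i m_i)\gcd(d_i,m_i)}{d_i m_i^2} \cdot \sum_{\bb{\psi}} T_{\b{d},\bb{\psi}}(B;\b{m})$, as required.

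The serious obstacle is the error estimate. Bounding trivially with $|({\cdot}/{\cdot})| \leq 1$ and $\tau(\cdots) \ll B^\varepsilon$, the Lemma~\ref{laatt} error contributes at most $B^{1+\varepsilon} \Sigma$, where $\Sigma := \sum_{\b{k}\in\c{K}} \sum_{\boldsymbol{\xi}} \lambda_1(G_{\b{k},\boldsymbol{\xi}})^{-1}$. Following the strategy of Daniel~\cite[Lem.~3.2]{stephan}, the key observation is that any non-zero vector $(s,t) \in G_{\b{k},\boldsymbol{\xi}}$ satisfies $s \equiv \xi_i t \md{k_i}$, which together with $\Delta_i(\xi_i,1) \equiv 0 \md{k_i}$ forces $k_i \mid \Delta_i(s,t)$ for every $i$, and hence $K := k_1 \cdots k_n \mid \Delta(s,t)$. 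Since $r(\pi) \leq 3$ yields $\deg(\Delta) \leq 3$, we obtain $K \leq |\Delta(s,t)| \ll \|(s,t)\|^3$; therefore the shortest vector obeys $\|(s,t)\| = \lambda_1 \leq \sqrt{2K} \ll B^{3/4}$. Moreover, for fixed $(s,t) \neq 0$, the number of triples $(\b{k},\boldsymbol{\xi})$ with $(s,t) \in G_{\b{k},\boldsymbol{\xi}}$ is bounded by the number of divisors $K$ of $\Delta(s,t)$ times their factorisations $K = k_1 \cdots k_n$ times the total count of valid residues $\boldsymbol{\xi} \md K$ (which is $\prod_i \tau_i(k_i) \ll K^\varepsilon$), hence $\ll \|(s,t)\|^\varepsilon$. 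Interchanging the order of summation and invoking $\sum_{0\neq(s,t),\,\|(s,t)\|\leq N} \|(s,t)\|^{-1} \ll N$ gives
\[
\Sigma \ll \sum_{\substack{(s,t) \in \Z^2 \setminus \{0\}\\ \|(s,t)\| \ll B^{3/4}}}
\frac{\|(s,t)\|^\varepsilon}{\|(s,t)\|} \ll B^{3/4+\varepsilon},
\]
yielding the claimed error bound $O_\varepsilon(B^{7/4+\varepsilon})$. The degenerate pairs with $\Delta(s,t) = 0$ lie on the finitely many rational lines arising from linear factors of $\Delta$; on each such line one applies the same argument using the non-vanishing factors $\Delta_j$ with $j \neq i$, and these contribute a strictly lower-order term.
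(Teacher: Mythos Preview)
Your proof is correct and follows the same route as the paper: both feed Lemma~\ref{laatt} into Lemma~\ref{miles}, evaluate the $\boldsymbol{\xi}$-sum multiplicatively as $\varrho_i(k_i)\tau_i(d_i''m_i'')$, and control the error by swapping the sum over $(\b{k},\boldsymbol{\xi})$ for a sum over primitive minimal lattice vectors of size $\ll B^{3/4}$ (Daniel's trick), with the degenerate zeros of $\Delta$ treated separately. One minor remark on presentation: the bound $\sqrt{2K}\ll B^{3/4}$ should be justified via $K\leq\prod_iQ_i\ll B^{r(\pi)/2}$ rather than via the preceding inequality $K\ll\lambda_1^3$, and in the degenerate case the primitive minimal vector is one of $O(1)$ fixed points, so that the $k_j$ with $j\neq i$ are $O(1)$ while $k_i$ ranges freely up to $Q_i\ll B^{1/2}$---this is how the paper obtains the extra $\sum_{\deg\Delta_i=1}Q_i$ term.
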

\begin{proof}
Using~\eqref{bi1231}
in conjunction with
$\gcd(m^2_ik_i,d_i'')=1$,
allows us to procure the validity of
\[
\sum_{\substack{\xi_i\md{q_i}\\
\Delta_i(\xi_i,1)\equiv 0 \md{q_i} 
}}
\hspace{-0,5cm}
\l(\frac{F_i(\xi_i,1)}{k_i}\r)
=
\varrho_i(k_i)
\tau_i(d_i''m_i'')
.\]
The error term in our lemma is of order
$\ll_\varepsilon
B^{1+\varepsilon}
\sum_{k_i \leq Q_i}
\lambda_1(G_\b{k})^{-1}$,
as one can deduce upon congregation of
Lemmas~\ref{miles},~\ref{laatt}
and the estimate~\eqref{national acrobat}.
The fact that the integers 
$k_i$ are coprime in pairs
shows that the lattice
$G_{\b{k}}$ has determinant $k_1 \cdots k_n$.
Therefore,
by
Minkowski's theorem,
its first successive minimum satisfies
\[
\lambda_1(G_\b{k})\ll
(k_1\cdots k_n)^{1/2}
\ll B^{3/4},\]
from which we immediately infer that
the sum over $k_i$ is
\[\ll
\sum_{\substack{\b{v} \in \Z^2\\
\Delta(\b{v}) \neq 0\\
 \|\b{v}\| \ll B^{3/4}}} 
\frac{1}{\|\b{v}\|}
\prod_{i=1}^n
\tau(|\Delta_i(\b{v})|)+
\sum_{\substack{1\le i \le n\\ \deg(\Delta_i)=1}}\!Q_i
.\]
The last term is the contribution
of the primitive
(due to the minimality property of the first successive minimum) non-zero vectors $\b{v}$ 
that are zeros of some $\Delta_i$. 
The number of such vectors is twice the number of linear forms dividing $\Delta$, if any.
Observe that if $\Delta_j$ is linear then the available values $k_i\leq Q_i,i\neq j$,  
with
$\b{v}=\lambda_1(G_{k_1,\ldots,k_n})$ are $O(1)$ in cardinality,
since 
$k_i|\Delta_i(\b{v})$ and $\Delta_i(\b{v})\neq 0$ assumes
finitely many values.
Our proof is now
brought into conclusion
upon deploying the estimate
$\smash
\sum_{0<\|\b{v}\|\leq z}\|\b{v}\|^{-1}\ll z$.
\end{proof}
\subsection{Main term}
\lab{archgoat}
Our aim in this section is to estimate the average $T_{\b{d},\bb{\psi}}(B;\b{m})$.
In contrast to all prior treatments of similar sums,
we provide a uniform treatment for all $\bb{\psi} \in \{0,1\}^n$.   
 
Recall the definition of 
$Q_i$ 
and 
$\omega_{\bb{\psi}}$
provided
in~\eqref{def:qi}
and~\eqref{def:omom}
respectively.
\begin{lemma}
\lab{partials}
\vskip 0.2cm 
$(1)$
For all
$\bb{\psi} \in \{0,1\}^n$
we have
$\omega_{\bb{\psi}}\!\l(1,\ldots,1\r)=B^2\vol(\c{R})+O(B^{3/2}\|\b{m}\|)$.
$(2)$
For all $i$ with $\psi_i\neq 0$ and each $v_i\geq 1$ we have
\[
|\omega_{\bb{\psi}}(\b{v}+\b{e_i})-\omega_{\bb{\psi}}(\b{v})|
 \ll 
B
\|\b{m}\|
\log B
\begin{cases} 
B^{1/2}  &\mbox{if } \deg(\Delta_i)=1,\\ 
v_i^{-1+2/\deg(\Delta_i)} &\mbox{otherwise.}
\end{cases}
\]
$(3)$
The  support of $\omega_{\bb{\psi}}$ is contained in
$
\Big\{\b{v} \in \l(\R_{\geq 1}\r)^n: 1\leq v_i\leq Q_i
\ \text{whenever} \
i \in \mathrm{supp}(\bb{\psi})\Big\}
$.
$(4)$
For all $\b{v}$ we have
$\omega_{\bb{\psi}}(\b{v})\ll  B^2$.
\end{lemma}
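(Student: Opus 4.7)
The plan is to treat the four items in reverse order of difficulty, reducing $(1)$ and $(2)$ to uniform volume estimates for sub-level and shell sets of the binary forms $\Delta_i$ on the box $B\c R$.

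Part $(4)$ will follow immediately from the inclusion $\c R_{\bb\psi}(\b v)\subseteq B\c R$, which gives $\omega_{\bb\psi}(\b v)\leq\vol(B\c R)\ll B^2$. Part $(3)$ reflects the fact that on $B\c R$ one has $|\Delta_i(s,t)|\leq \c A_i B^{\deg(\Delta_i)}$ by the choice of $\c A_i$ in~\S\ref{1}, while the definition~\eqref{def:qi} of $Q_i$ rearranges to $\c A_i B^{\deg(\Delta_i)}=m_i^2 W_iQ_i^2$; so when $\psi_i=1$ the constraint $|\Delta_i(s,t)|\geq v_im_i^2W_iQ_i$ is inconsistent with $(s,t)\in B\c R$ as soon as $v_i>Q_i$.

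Both $(1)$ and $(2)$ are then subsumed by bounds on the volume of
\[
E_i(A,A')=\{(s,t)\in B\c R:A\leq|\Delta_i(s,t)|<A'\},
\]
where $M_0:=m_i^2Q_iW_i\asymp m_iB^{d_i/2}$ and $d_i:=\deg(\Delta_i)$. Indeed $B\c R\setminus\c R_{\bb\psi}(\b 1)\subseteq\bigcup_{\psi_i=1}E_i(0,M_0)$ handles $(1)$, while the monotonicity of $\c R_{\bb\psi}$ in each $v_i$ gives the point-wise estimate $|\omega_{\bb\psi}(\b v+\b e_i)-\omega_{\bb\psi}(\b v)|\leq\vol E_i(v_iM_0,(v_i+1)M_0)$ needed for $(2)$. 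I would estimate these volumes by slicing at fixed $t$ (and symmetrically at fixed $s$ on the complementary strip where $|t|$ is small) and exploiting the homogeneity $\Delta_i(s,t)=t^{d_i}g_i(s/t)$ with $g_i(x)=\Delta_i(x,1)$. Since $\Delta_i$ is irreducible in $\Z[s,t]$, the polynomial $g_i$ has only simple roots; the standard sub-level estimate $|\{u\in I:|g_i(u)|<c\}|\ll c^{1/d_i}$ on any bounded interval then gives $\vol E_i(0,M)\ll BM^{1/d_i}$. Applied with $M=M_0$ this is $\ll B^{3/2}m_i^{1/d_i}\leq B^{3/2}\|\b m\|$, which is exactly $(1)$.

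For the shell estimate in $(2)$ the same slicing is used together with the change of variables $y=g_i(u)$, whose Jacobian is bounded below near each simple root of $g_i$. When $d_i=1$ the set $E_i(A,A+M_0)$ is a planar strip of width $\asymp M_0$ meeting $B\c R$ in area $\ll M_0B\asymp m_iB^{3/2}$, which is within the stated bound. When $d_i\geq 2$ one splits the $t$-slice into a neighbourhood of each real root of $g_i$ and its complement, where $|g_i(u)|\asymp|u|^{d_i}$; integrating the resulting shell widths over the range $(v_iM_0)^{1/d_i}\ll|t|\ll B$ produces the bound $\ll M_0^{2/d_i}v_i^{-1+2/d_i}\asymp\|\b m\|Bv_i^{-1+2/d_i}$, with the $\log B$ emerging from the borderline case $d_i=2$ through the familiar $\int dt/|t|$. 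The main obstacle is precisely this shell estimate for $d_i\geq 2$: one must carry the root-splitting and change-of-variables argument faithfully through the two complementary ranges of $|t|$ in order to recover the sharp exponent $-1+2/d_i$ on $v_i$ recorded in the lemma.
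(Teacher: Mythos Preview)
Your proposal is correct; parts $(3)$ and $(4)$ coincide with the paper's argument. For $(1)$ and $(2)$ you take a genuinely different route.

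The paper exploits the homogeneity of $\Delta_i$ globally rather than slicing. For $\deg(\Delta_i)\geq 3$ (and for $\deg(\Delta_i)=2$ with $\Delta_i$ irreducible over $\R$) the region $\{(s,t)\in\R^2:|\Delta_i(s,t)|<1\}$ has finite area $\beta_i$, and scaling gives $\vol(\{|\Delta_i|<z\})=\beta_i z^{2/\deg(\Delta_i)}$ outright. Part $(1)$ follows at once, and for $(2)$ the shell in all of $\R^2$ has area exactly
\[
\beta_i z_i^{2/\deg(\Delta_i)}\bigl((v_i+1)^{2/\deg(\Delta_i)}-v_i^{2/\deg(\Delta_i)}\bigr),
\]
which the mean-value theorem bounds by $\ll z_i^{2/\deg(\Delta_i)}v_i^{-1+2/\deg(\Delta_i)}$; intersecting with $B\c R$ only helps. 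The residual cases --- linear $\Delta_i$, and $\deg(\Delta_i)=2$ split over $\R$ --- are handled separately in the paper by a linear change of variables carrying $\Delta_i$ to $s$ or to $st$; the $\log B$ in the statement arises solely from the split quadratic case, via the integral $\int \mathrm{d}t/|t|$, exactly as you predicted.

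Your slicing-plus-one-variable-sublevel approach is more uniform in the degree and sidesteps the $\R$-reducibility dichotomy for quadratics, at the price of the two-range bookkeeping for $|t|$ that you rightly flag as the crux of the shell bound. The paper's homogeneity argument is shorter for $\deg(\Delta_i)\geq 3$ but forces a case analysis. Both routes deliver the lemma.
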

\begin{proof}$(1)$
Our statement is obvious if $\bb{\psi}=\b{0}$
and if $\bb{\psi}\neq \b{0}$
we shall show that
for all $i$ in the support of $\bb{\psi}$
we have,
upon denoting
$z_i=m_iW_i^{1/2}\c{A}_i^{1/2}B^{\deg(\Delta_i)/2}$,
that
\beq{2sh}{
\vol\l((s,t)\in B\c{R}:|\Delta_i(s,t)|<z_i\r)
\ll m_i B^{3/2}.}
Our claim
will then follow from
\[
\Big|
\omega_{\bb{\psi}}\!\l(1,\ldots,1\r)-B^2\vol(\c{R})
\Big|
\leq
\sum_{i \in \mathrm{supp}(\bb{\psi})} 
\vol\l((s,t)\in B\c{R}:|\Delta_i(s,t)|<z_i
\r)
.\]
If $\deg(\Delta_i)\geq 3$,
then
$\beta_i=\vol\l((s,t)\in \R^2:|\Delta_i(s,t)|<1\r)<\infty$,
and therefore 
\[
\vol\l((s,t)\in B\c{R}:|\Delta_i(s,t)|<z_i\r)
\leq \beta_i z_i^{2/\deg(\Delta_i)}\ll m_i B,\]
we can also treat similarly the case when 
$\Delta_i$ is quadratic and irreducible in $\R[s,t]$.
If $\Delta_i$ is linear, then a linear change of variables 
shows that
\[
\vol\l((s,t)\in B\c{R}:|\Delta_i(s,t)|<z_i\r)
\ll Bz_i \ll m_i B^{3/2},\]
which is sufficient.
The last remaining case is when
$\Delta_i$ is quadratic and splits over $\R$, in this case
it can be transformed into $st$. Denoting by $\c{R}'$ 
the image of $\c{R}$ under this transformation,
we observe that 
the volume of
$%\Big
\{(s,t)\in B\c{R}':|st|<z_i%\Big
\}$
is
\[
\ll
\vol\l((s,t)\in B\c{R}':|s|<1\r)
+|z_i|
\int_{1\le |s|\ll B}\frac{\mathrm{d}s}{|s|}
\ll
B+m_i B \log B
,\]
which is sufficient for our proof.
\newline
$(2)$
In the case that
$\deg(\Delta_i)\geq 3$, we can
follow the notation of the proof of part $(1)$ to obtain
\[
\vol\l(
(s,t) \in B\mathcal{R}:
v_i\le  \frac{|\Delta_i(s,t)|}{z_i}< v_i+1
\r)
\leq \beta_i
z_i^{\frac{2}{\deg(\Delta_i)}}
\l((v_i+1)^{\frac{2}{\deg(\Delta_i)}}-v_i^{\frac{2}{\deg(\Delta_i)}}\r),
\]
which by the mean value theorem is
$\ll B m_i^{2/\deg(\Delta_i)}
v_i^{1-2/\deg(\Delta_i)}$.
The remaining cases $\deg(\Delta_i)=1$ or $2$, are treated in a similar fashion.
\newline
$(3)$ It flows directly from
the fact that $|\Delta_i(s,t)| \leq 
\c{A}_i B^{\deg(\Delta_i)}$ for all $(s,t) \in B\c{R}$.
\newline
$(4)$ This part is a direct consequence of the inclusion 
$\c{R}_{\bb{\psi}}\!\l(\b{v}\r)\subset B\c{R}$.
\end{proof} 
\begin{lemma}
\lab{triumphat}
Assume that
$h \in \c{U}$,
$b$ is coprime to $W$
and
define
$\gamma_i=2/(4+\deg(\Delta_i)).$
Then there exist 
$
W_1 \in W\N,
c \in \R_{>0}
$
and
$h_1 \in \c{U}$,
such that 
for all
$0<\varepsilon <\gamma_i$ and
$x>0$, we have
\[
\sum_{\substack{k \leq x\\ \gcd(k,bW_1)=1}}
\hspace{-0,5cm}
\frac{\varrho_i(k)}{k}
h(k)
=c
h_1(b)
+O_{\varepsilon}\!\l(
\frac
{(bx)^\varepsilon}
{x^{\gamma_i}}
\r),
\]
where the implied constant is independent of $b$ and $x$.
\end{lemma}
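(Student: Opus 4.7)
The plan is to apply Perron's formula to the Dirichlet series
\[
B(s) = \sum_{\gcd(k, bW_1) = 1} \frac{\varrho_i(k) h(k)}{k^s} = \prod_{p \nmid bW_1}\bigl(1 + h(p) R_i(p, p^{-s})\bigr),
\]
in order to estimate $V(x) := \sum_{k\leq x,\, (k,bW_1)=1} \varrho_i(k) h(k)$, and then to deduce the claim by Abel summation. The key inputs are the decomposition $P_i = G_i L_i$ of~\eqref{alladin sane} and the convexity bound~\eqref{krateio}.

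First I would factor $B(s) = P_i(s)\, H_b(s)\, E(s)$, where $H_b(s) = \prod_{p \mid bW_1,\ p \nmid W}(1 + R_i(p, p^{-s}))^{-1}$ is a finite Euler product and
\[
E(s) = \prod_{p \nmid bW_1}\frac{1 + h(p) R_i(p, p^{-s})}{1 + R_i(p, p^{-s})} = \prod_{p \nmid bW_1}\biggl(1 + \frac{g(p) R_i(p, p^{-s})}{1 + R_i(p, p^{-s})}\biggr)
\]
converges absolutely and is bounded in $\re(s) > 0$, using $g(p) = O(1/p)$ and $R_i(p, p^{-s}) \ll p^{-\re(s)}$. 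Since $P_i = G_i L_i$ is holomorphic in $\re(s) > 1/2$, so is $B(s)$ after enlarging $W$ to ensure that all prime factors of $bW_1$ exceed $D_0$. Choosing $W_1 \in W\N$ large enough that all local factors of $H_b$ and $E$ at primes of $W_1$ are positive at $s = 1$, one verifies $B(1) = c\,h_1(b)$, where $c > 0$ is independent of $b$, the multiplicative function $h_1$ is defined by $h_1(p) = (1 + h(p) R_i(p, 1/p))^{-1}$ for $p \nmid W_1$ and $h_1(p) = 1$ otherwise, and the estimate $h(p) R_i(p, 1/p) = O(1/p)$ together with~\eqref{ippo} yields $h_1 \in \c{U}$ and $c \neq 0$.

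Next, by truncated Perron with $c_0 = 1 + \varepsilon$,
\[
V(x) = \frac{1}{2\pi i}\int_{c_0 - iT}^{c_0 + iT} B(s)\,\frac{x^s}{s}\,ds + O\!\left(\frac{x^{1+\varepsilon}}{T}\right),
\]
and I would shift the contour to $\re(s) = 1/2 + \varepsilon$; no poles are crossed. On this line~\eqref{krateio} gives $|P_i(s)| \ll |\im(s)|^{d/4 + \varepsilon}$ with $d = \deg(\Delta_i)$, while $|H_b(s)| \ll \prod_{p \mid b}(1 + O(p^{-1/2})) \ll b^\varepsilon$ (using $\sum_{p \mid b} p^{-1/2} \ll b^\varepsilon$ once $D_0$ is sufficiently large) and $|E(s)| \ll 1$. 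The vertical integral contributes $\ll (bx)^\varepsilon x^{1/2} T^{d/4}$ and the horizontal segments contribute $\ll (bx)^\varepsilon x^{1+\varepsilon}/T$; balancing $x^{1/2} T^{d/4}$ against $x/T$ at the choice $T = x^{2/(4+d)} = x^{\gamma_i}$ yields $V(x) \ll (bx)^\varepsilon x^{1 - \gamma_i}$.

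Finally, Abel summation gives
\[
\sum_{\substack{k \leq x \\ (k, bW_1) = 1}} \frac{\varrho_i(k) h(k)}{k} = \frac{V(x)}{x} + \int_1^x \frac{V(t)}{t^2}\,dt.
\]
The first term is $\ll (bx)^\varepsilon x^{-\gamma_i}$. For the second I would write $\int_1^x = \int_1^\infty - \int_x^\infty$: the full integral equals $B(1) = c\,h_1(b)$ by letting $x \to \infty$ in the identity above, while the tail satisfies $\int_x^\infty V(t)/t^2\,dt \ll (bx)^\varepsilon x^{-\gamma_i}$ thanks to the bound on $V$. I expect the main obstacle to be the balancing in the Perron step: obtaining the sharp exponent $\gamma_i = 2/(4 + d)$ requires shifting all the way to the edge $\re(s) = 1/2 + \varepsilon$ of the holomorphy strip of $P_i$ (rather than to some interior line), and simultaneously controlling $H_b(s) \ll b^\varepsilon$ uniformly in the strip by enlarging $D_0$.
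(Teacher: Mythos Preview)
Your approach is essentially identical to the paper's: factor the Dirichlet series as $P_i(s)$ times a benign Euler product, invoke the convexity bound~\eqref{krateio}, apply truncated Perron and shift the contour to $\Re(s)=\tfrac12+\varepsilon$, balance at $T=x^{\gamma_i}$ to obtain $V(x)\ll (bx)^\varepsilon x^{1-\gamma_i}$, and finish by Abel summation. The only cosmetic differences are notational (your $B(s),H_b,E$ versus the paper's $F_g,\Phi_b$) and your slightly stronger claim that $E(s)$ is bounded on all of $\Re(s)>0$, which is unnecessary since only $\Re(s)\ge\tfrac12+\varepsilon$ is used.
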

\begin{proof}
We shall begin by proving our claim in the case $x\geq 1$.
Define the arithmetic function 
$g_k:\N\to\R$ by
\[
g_k:=
\begin{cases}
h(k) \varrho_i(k)  &\mbox{if } \gcd(k,bW)=1,\\ 
0 &\mbox{otherwise }
\end{cases}\]
and denote its Dirichlet series by $F_g(s)$.
Multiplying $W$ by sufficiently large primes 
produces a multiple $W_1$ of $W$ and then
by~\eqref{bwv 244} we have that for all 
$z \in \mathbb{C}$ with $|z|\leq 2^{-1/2}$ 
and
$p\nmid W_1$
one has
$
%\beq{opeth}{
R_i(p,z)
\ll |z|
$.
%.}
The fact that $h\in \c{U}$ shows that 
in the same range for $z$ and for all primes $p\nmid bW_1$,
one has 
\[\l(\sum_{m=1}^\infty
g_{p^m}z^m\r)\l(1+R_i(p,z))\r)^{-1}
=\l(1+h(p)R_i(p,z)\r)\l(1+R_i(p,z)\r)^{-1}=1+O\l(\frac{|z|}{p}\r)
,\]
which reveals that the product
\[\Phi_b(s):=\prod_{p\nmid bW_1}
\frac{\sum_{m=1}^\infty
g_{p^m}p^{-ms}}{1+R_i(p,p^{-s})},
\]
defined for all $b \in \N$ with $\gcd(b,W)=1$,
converges absolutely in the region $\Re(s)>\frac{1}{2}$.  
The bound
\[\frac{\sum_{m=1}^\infty
g_{p^m}p^{-ms}}{1+R_i(p,p^{-s})}
=1+O(p^{-3/2})
\] then proves that $\Phi_b(s)$ has no zero in the same region.

Recalling definition~\eqref{alladin sane}
enables us to see that
the following factorisation is valid in the region 
$\Re(s)>1$,
\[
F_g(s)
=
\Phi_b(s)
P_i(s)
\prod_{p|b}\l(1+R_i(p,p^{-s})\r)^{-1}
,\]
and therefore by analytic continuation it is also valid for 
$\Re(s)>\frac{1}{2}$. This shows that $F_g$ may be extended to 
$\Re(s)>\frac{1}{2}$
and that, owing to~\eqref{krateio}, it satisfies the bound
\beq{bbb}{F_g(s)
\ll_\varepsilon
b^\varepsilon
(1+|\Im(s)|)^{\frac{\deg(\Delta_i)}{2}(1-\Re(s))+\varepsilon},
\frac{1}{2}<\Re(s)<1.}
Enlarging $W$
ensures that the quantities
\[c=\prod_{p\nmid W}\l(1+h(p)R_i(p,p^{-1})\r),
h_1(n)=\prod_{p|n}\l(1+h(p)R_i(p,p^{-1})\r)^{-1} 
\]
satisfy $c>0 $ and $h_1 \in \c{U}$,
once the factorisation
$c=\Phi_1(1)P_i(1)$ and~\eqref{ippo} have been combined.
Letting $G(x):=\sum_{k\leq x}g_k$,
a computation 
involving Euler products
reveals
that 
the constant
$F_g(1)=\sum_{k=1}^{\infty}g_k/k=\int_1^\infty G(u)/u^{2}\mathrm{d}u$
equals
%\[
$ch_1(b)$.
%\]

Our aim for the rest of this proof is to show that $
G(x)\ll_\varepsilon b^\varepsilon x^{1-\gamma_i+\varepsilon}$, 
by partial summation this would be sufficient for our lemma.
The fact that $F_g(s)$ converges for $\Re(s)>\frac{1}{2}$
immediately proves such an estimate, save for the dependence on
$b$.
However,
Making this dependence explicit is
the main point of our lemma;
this requirement makes using contour integration indispensable.

Letting $x\geq 1$ be
a half-integer and
using~\cite[Cor.5.3]{mova}
with
$\sigma_0=1+1/\log x$
and
$T=x^{\gamma_i}$,
gives
\[G(x)=\frac{1}{2\pi i}\int_{\sigma_0-iT}^{\sigma_0+iT}F_g(s)
\frac{x^s}{s}
\mathrm{d}s
+O\l(R_0(\max_{k\le 2x}|g_k|)+\frac{x}{T}\sum_{k=1}^\infty\frac{|g_k|}{k^{\sigma_0}}\r)
,\]
where
\[R_0:=\sum_{\substack{\frac{x}{2}<n<2x \\n \neq x}}\min\l(1,\frac{x}{T|x-n|}\r)\]
satisfies 
$R_0\ll 1+\frac{x \log x}{T}$, as shown for example in~\cite[p.g. 180]{mova}.
By~\eqref{pointw} and~\eqref{national acrobat} we deduce that there exists $A>0$ such that
\[|\varrho_i(k)h(k)|\ll
\tau(k)^A.\]
This shows that 
\[
\sum_{k=1}^\infty\frac{|g_k|}{k^{\sigma_0}}
\ll
\zeta(\sigma_0)^{2^A},
\]
and therefore
the bound
$\zeta(1+\delta)\ll \delta^{-1}$,
valid for all $\delta>0$,
yields
\beq{int1}{G(x)=\frac{1}{2\pi i}\int_{\sigma_0-iT}^{\sigma_0+iT}F_g(s)
\frac{x^s}{s}
\mathrm{d}s
+O_\varepsilon\!\l(\frac{x^{1+\varepsilon}}{T}\r)
.}
Taking advantage of the fact that $F_g$ has no poles in the rectangle 
enclosed by the points
$\sigma_0\pm iT$ and $\frac{1}{2}+\varepsilon\pm iT$
we may replace the vertical line of integration in~\eqref{int1}
by two horizontal and a vertical line.
An easy computation that makes use of~\eqref{bbb}
allows us to provide satisfactory
upper
bounds for the contribution coming from the three remaining
segments, thus finalising the proof in the range
$x\geq 1$. In the remaining case
$0<x<1$ we notice that the sum over $k$ in the statement of our lemma contains no terms
and therefore the estimates
$c\ll 1$ and $h(b)\ll_\varepsilon b^\varepsilon$
prove that our claim remains valid in the region $x<1$.
\end{proof}
Our next result regards averages of certain
non-multiplicative functions that will appear in the
forthcoming estimation of
$T_{\b{d},\bb{\psi}}(B;\b{m})$.
Recall definition~\eqref{bowie_low}.
\begin{lemma}
\lab{awky}
Suppose we are given a function $h \in \c{U}$
and integers $d,m$ and $k_0$
satisfying
\[1=\gcd(k_0,dm)=\gcd(k_0dm,W)=\mu(d)^2=\mu(m)^2\]
and define the integers
\[d'=\prod_{\substack{p|d\\p\nmid m}}p,
m'=\prod_{\substack{p|m\\p\nmid d}}p
.\]
There exist $h_j \in \c{U}$ and a constant $c>0$ 
such that for all $0<\varepsilon<\gamma_i,x\geq 1,$ 
we have
\begin{align*}
&\sum_{\substack{k \leq x \\ \gcd(k,Wk_0)=1}}
\frac{\varrho_i(k)}{k}
h(k,dm')
\tau_i\!\l(\frac{dm'}{\gcd(k,dm')}\r)
\gcd(k,d')=\\
&\ \ \
c h_1(k_0)
h_2(d)
h_3(m')
\sigma_i(d)
\tau^\sharp_i(m')
+O_\varepsilon\l(
\frac{(dmk_0x)^\varepsilon}{x^{\gamma_i}}
\r)
.\end{align*}
\end{lemma}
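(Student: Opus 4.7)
The plan is to factorise the summation variable $k$ according to its prime support, reducing the average to a one-dimensional sum handled by Lemma~\ref{triumphat}. Write $k = k_1 k_2 k_3$, where $k_1$ is supported on the primes of $d$, $k_2$ is supported on the primes of $m'$, and $k_3$ is coprime to $dm$; since $k$ is coprime to $Wk_0$, so are $k_1,k_2,k_3$. Because $d$ and $m$ are squarefree and $m'$ explicitly excludes the primes of $d$, the three prime-supports are pairwise disjoint. By the multiplicativity of $\varrho_i$, by the fact that $h(\,\cdot\,, dm')$ only sees primes not dividing $dm'$, and by the evident factorisations of $\tau_i(dm'/\gcd(k,dm'))$ and $\gcd(k,d')$, the summand splits as a product of local contributions indexed by $k_1$, $k_2$ and $k_3$.

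For fixed $k_1,k_2$ I would apply Lemma~\ref{triumphat} to the inner sum of $\varrho_i(k_3)h(k_3)/k_3$ over $k_3 \le x/(k_1k_2)$ coprime to $dmWk_0$, with $b = dmk_0$. This delivers a main term of the form $c_0 h_1^\star(dmk_0)$ together with an error $\ll_\varepsilon (dmk_0)^\varepsilon (x/(k_1k_2))^{-\gamma_i+\varepsilon}$. The outer sums over $k_1, k_2$ converge geometrically thanks to~\eqref{bi1231} and~\eqref{national acrobat}: the local contribution at a prime $p\mid d'$ simplifies, via~\eqref{bwv 988}, to $\sigma_i(p)$; the local contribution at a prime $p\mid m$ simplifies, via the analogous identity $\tau^\sharp_i(p)=\tau_i(p)+R_i(p,1/p)$, to $\tau^\sharp_i(p)$. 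A preliminary truncation at $k_1 k_2 \le x^{\gamma_i/2}$ is used to secure the required uniform error bound, with the tail controlled by standard divisor-type estimates.

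Collecting the above yields a main term of the shape
\[
c_0\, h_1^\star(dmk_0) \prod_{p\mid d'}\sigma_i(p) \prod_{p\mid m}\tau^\sharp_i(p),
\]
which I would reshape into the stated form $c h_1(k_0) h_2(d) h_3(m')\sigma_i(d)\tau^\sharp_i(m')$ by invoking~\eqref{genesis} and~\eqref{all+}: factor $h_1^\star(dmk_0)=h_1^\star(k_0)h_1^\star(d)h_1^\star(m)$, split $\prod_{p\mid m}\tau^\sharp_i(p)=\tau^\sharp_i(m')\prod_{p\mid\gcd(d,m)}\tau^\sharp_i(p)$, and match $\prod_{p\mid d'}\sigma_i(p)$ against $\sigma_i(d)$ up to a correction at the primes of $\gcd(d,m)$. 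The residual factors at those shared primes, together with $h_1^\star(d)h_1^\star(m)$, are absorbed into $h_2$ and $h_3$, and $h_1:=h_1^\star$.

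The main obstacle is maintaining uniformity in $d,m,k_0$ throughout, so that both the invocation of Lemma~\ref{triumphat} and the tail bounds contribute errors controlled by $(dmk_0)^\varepsilon$, and verifying that the final functions $h_j$ genuinely lie in $\c{U}$; this last point reduces to checking that each local factor behaves as $1+O(1/p)$, which follows from~\eqref{bwv 244} together with the polynomial bound~\eqref{pointw} on values of functions in $\c{U}$.
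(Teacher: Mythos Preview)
Your proposal is correct and follows essentially the same route as the paper: the paper also decomposes $k$ by prime support relative to $d$ and $m'$ (in two steps, first fixing $\delta_j=\gcd(k,\cdot)$ and then splitting the cofactor as $l=l_1l_2l'$, which amounts exactly to your $k_1,k_2,k_3$), applies Lemma~\ref{triumphat} to the part coprime to $dm'$, and reads off the $\sigma_i$ and $\tau^\sharp_i$ factors from the remaining local sums. The only cosmetic difference is your preliminary truncation $k_1k_2\le x^{\gamma_i/2}$ for the error, which the paper replaces by summing the error contribution over all $l_1,l_2$ directly using~\eqref{national acrobat} and $\gamma_i<1$.
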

\begin{proof}
Writing $\delta_1=\gcd(k,d),
\delta_2=\gcd(k,m')$
transforms our sum into
\[\sum_{\substack{\delta_1|d\\
\delta_2|m'}}
\tau_i\!\l(\frac{d}{\delta_1}\r)
\tau_i\!\l(\frac{m'}{\delta_2}\r)
\delta_2^{-1}
\hspace{-0,3cm}
\sum_{\substack{ l \leq x/(\delta_1\delta_2) \\ \gcd(l,Wk_0dm'/(\delta_1\delta_2))=1}}
\hspace{-0,3cm}
\frac{\varrho_i(\delta_1\delta_2l)}{l}
h(l,\delta_1\delta_2) 
.\]
For each $\delta \in \N$ we define the 
set $\mathbb{M}(\delta)\subset \N$ comprising of all positive integers 
whose
prime factors divide $\delta$ 
and we allow 
$1\in \mathbb{M}(\delta)$. Then
each natural $l$ can be decomposed uniquely as
$l=l_1l_2l'$ where 
$l_j \in \mathbb{M}(\delta_j)$ and 
$l'$ is coprime to $\delta_1\delta_2$, thus leading to
the equality of the sum
over $l$ with
\[
\sum_{l_j \in
\mathbb{M}(\delta_j)}
\frac{\varrho_i(\delta_1l_1)}{l_1}
\frac{\varrho_i(\delta_2l_2)}{l_2}
\sum_{\substack{ l' \leq x/(l_1 l_2) \\ \gcd(l',Wk_0dm')=1}}
\hspace{-0,3cm}
\frac{\varrho_i(l')}{l'}
h(l') 
.\]
Lemma~\ref{triumphat}
provides a function $h_1\in \c{U}$,
a constant $c>0$
and a multiple of $W$, which we identify here with $W$ itself,
such that the sum over $l'$ equals
\[c h_1\!(k_0dm')
+O_\varepsilon\l(\frac{(k_0dmx)^\varepsilon}{x^{\gamma_i}}
(l_1 l_2)^{\gamma_i}\r)
.\]
The estimate~\eqref{national acrobat}
reveals the validity of
$\varrho_i(\delta_jl_j)\ll_\varepsilon (\delta_jl_j)^\varepsilon
\leq (dml_j)^\varepsilon$. This shows that 
\[
\sum_{l_j \in \mathbb{M}(\delta_j)}
\frac{\varrho_i(\delta_1l_1)}{l_1}
\frac{\varrho_i(\delta_2l_2)}{l_2}
\ll_\varepsilon
(dm)^{2\varepsilon}
\prod_{p|\delta_1\delta_2}
\frac{1}{1-p^{\varepsilon-1}}
,\]
and the proof of our lemma follows upon noting that 
\[
\sum_{l_j \in \mathbb{M}(\delta_j)}
\frac{\varrho_i(\delta_jl_j)}{l_j}
=\prod_{p|\delta_j}
\l(\sum_{k=0}^\infty
\frac{\varrho_i(p^{k+1})}{p^{k}}
\r),
j=1,2.\]
\end{proof}
Recall~\eqref{main t}
and define
\beq{main ct}{
T^\natural_{\b{d},\bb{\psi}}(B;\b{m}):=
\sum_{\b{k} \in \c{K}} 
\omega_{\bb{\psi}}\!\l(\b{k}\r)
\prod_{i=1}^n 
\frac{\varrho_i(k_i)
}{k_i}
f_0(k_i,d_i m_i)
\gcd(k_i,d_i')
\tau_i(d_i''m_i'')
.}
One has
\[
T^\natural_{\b{d},\bb{\psi}}(B;\b{m})
-
T_{\b{d},\bb{\psi}}(B;\b{m})
=
\Osum_{\!\!\b{k} \in \c{K}} 
\omega_{\bb{\psi}}\!\l(\b{k}\r)
\prod_{i=1}^n 
\frac{\varrho_i(k_i)
}{k_i}
f_0(k_i,d_i m_i)
\gcd(k_i,d_i')
\tau_i(d_i''m_i'')
,\]
where
$\Osum$ is over
$\b{k}$ with
 $k_i=Q_i$ if $\psi_i=0$
and $1\leq k_i\leq Q_i$ if $\psi_i=1$.
The third part of Lemma~\ref{partials}
in conjunction
with~\eqref{national acrobat}
then leads to
\beq{t:2}{
T^\natural_{\b{d},\bb{\psi}}(B;\b{m})
-
T_{\b{d},\bb{\psi}}(B;\b{m})
\ll_{\varepsilon}\!\|\bb{\psi}\|
B^{\frac{3}{2}+\varepsilon}
\|\b{d}\|^n
.} 

A gambit is on offer in the proof of the ending lemma of the present section;
we use iterated partial summation 
instead of the more natural 
multidimensional partial summation
to deal with the factor 
$\omega_{\bb{\psi}}\!\l(\b{k}\r)$
present in $T_{\b{d},\bb{\psi}}(B;\b{m})$. 
While this approach complicates the argument,
it has the advantage of providing a uniform treatment for 
all vectors $\boldsymbol{\psi}$ and forms $\Delta_i$.

\begin{lemma}
\lab{lemmon}
There exist
$c_1>0$ 
and multiplicative functions 
$\mathds{a}_{j,i}
\in \c{U}$,
such that 
\[
T^\natural_{\b{d},\bb{\psi}}(B;\b{m})=
c_1
B^2
\l(\prod_{i=1}^n
\mathds{a}_{1,i}(d_i)
\sigma_i\!\l(d_i\r) 
\tau_i^\sharp\!\l(m_i'\r) 
\mathds{a}_{2,i}(m_i,d_i) 
\r)
+O\!\l(
\|\b{m}\|
B^{\frac{19}{20}}
\r)
,\]
where the implied constant is independent of $B,\b{d},\b{m}$ and $\bb{\psi}$.
\lab{prepar}
\end{lemma}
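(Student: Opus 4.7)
The plan is to evaluate the multi-variable sum $T^\natural_{\b{d},\bb{\psi}}$ by iterated Abel partial summation in the variables $k_n, k_{n-1}, \ldots, k_1$ one at a time, applying Lemma~\ref{awky} to handle each univariate arithmetic sum and Lemma~\ref{partials} to control the smooth weight $\omega_{\bb{\psi}}(\b{k})$. The choice of iterating (as opposed to performing a joint multidimensional Abel summation) is what will allow a uniform treatment across all sign patterns $\bb{\psi} \in \{0,1\}^n$ and all admissible degree distributions of the $\Delta_i$.

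For the first step, fix $\b{k}_{-n} := (k_1, \ldots, k_{n-1})$ and regard the inner sum over $k_n$ as an Abel sum of the arithmetic factor
\[
g_n(k_n) := \frac{\varrho_n(k_n)}{k_n} f_0(k_n, d_n m_n) \gcd(k_n, d_n') \tau_n(d_n'' m_n'')
\]
against the smooth factor $k_n \mapsto \omega_{\bb{\psi}}(\b{k}_{-n}, k_n)$. The hypotheses of Lemma~\ref{awky} are satisfied with $k_0 = \prod_{j<n} d_j m_j k_j$, thanks to the pairwise coprimalities guaranteed by $\b{d}\in\c{D}$, $\b{m}\in\c{M}$, and $\b{k}\in\c{K}$. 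Abel summation combined with Lemma~\ref{awky} then gives
\[
\sum_{k_n} g_n(k_n)\,\omega_{\bb{\psi}}(\b{k}_{-n}, k_n) = M_n(\b{k}_{-n}) \cdot \omega_{\bb{\psi}}(\b{k}_{-n}, 1) + \mathrm{Err}_n(\b{k}_{-n}),
\]
where $M_n(\b{k}_{-n}) = c_n \, h_1^{(n)}(k_0) \, h_2^{(n)}(d_n) \, h_3^{(n)}(m_n') \, \sigma_n(d_n) \, \tau_n^\sharp(m_n')$, and the remainder $\mathrm{Err}_n$ combines the $t^{-\gamma_n}$ tail from Lemma~\ref{awky} with the total variation of $\omega_{\bb{\psi}}$ in $k_n$ bounded by Lemma~\ref{partials}(2).

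The factor $h_1^{(n)}(k_0) \in \c{U}$ is multiplicative, and using the pairwise coprimalities $\gcd(d_j m_j, d_\ell m_\ell k_\ell) = 1$ for $j \ne \ell$ (which follow from the defining properties of $\c{D}, \c{M}, \c{K}$) one decomposes it over the remaining variables as $\prod_{j < n} h_1^{(n)}(d_j m_j) \cdot h_1^{(n)}(k_j, d_j m_j)$. The $d_j m_j$-piece contributes to the multiplicative constants $\mathds{a}_{1,j}(d_j)$ and $\mathds{a}_{2,j}(m_j, d_j)$ of the final answer, while the $k_j$-piece, itself in $\c{U}$, is absorbed into the arithmetic weight $f_0(k_j, d_j m_j)$ for the forthcoming $j$-th iteration. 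The modified weight still has the form required by Lemma~\ref{awky}, namely $h(k_j, d_j m_j)$ with $h \in \c{U}$. Proceeding inductively in the order $k_{n-1}, \ldots, k_1$, each iteration eliminates one more variable, ultimately reducing the smooth factor to $\omega_{\bb{\psi}}(1, \ldots, 1)$, which by Lemma~\ref{partials}(1) equals $B^2 \vol(\c{R}) + O(B^{3/2} \|\b{m}\|)$. Collecting all the multiplicative data produced along the way yields the claimed main term $c_1 B^2 \prod_{i=1}^n \mathds{a}_{1,i}(d_i)\,\sigma_i(d_i)\,\tau_i^\sharp(m_i')\,\mathds{a}_{2,i}(m_i,d_i)$, with $c_1 = \vol(\c{R})\prod_i c_i$.

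The main obstacle is the cumulative error analysis: to bound the sum of the $\mathrm{Err}_i$'s (propagated through all subsequent iterations) by $\|\b{m}\| B^{19/20}$. At each step the error is generated by integrating the $t^{-\gamma_i}$ discrepancy from Lemma~\ref{awky} against the variation of $\omega_{\bb{\psi}}$ given by Lemma~\ref{partials}(2), plus a boundary term at $k_i = Q_i$, plus a harmless $B^\varepsilon$ absorption. Crucially, for $\psi_i = 1$ the map $k_i \mapsto \omega_{\bb{\psi}}(\b{k}_{-i}, k_i)$ is monotonically decreasing (the defining region shrinks as $k_i$ grows), so its total variation is bounded by $\omega_{\bb{\psi}}(\b{k}_{-i}, 1) \ll B^2$, which converts a variation estimate into a volume estimate, while for $\psi_i = 0$ the weight is constant in $k_i$ and trivialises the $i$-th iteration (this is what delivers uniformity in $\bb{\psi}$). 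Combining the resulting $Q_i^{-\gamma_i}$ savings over all iterations, and exploiting the rank constraint $r(\pi)\leq 3$ (which bounds $\sum_i\deg(\Delta_i)\leq 3$ and so limits the sizes of the $Q_i$), the telescoped error can be driven down to the target $\|\b{m}\| B^{19/20}$, completing the proof.
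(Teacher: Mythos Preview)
Your overall strategy---iterated Abel summation in $k_n,\ldots,k_1$, invoking Lemma~\ref{awky} at each step and Lemma~\ref{partials} to control the weight, with the $\c U$-bookkeeping to absorb the multiplicative debris into the next iteration---is exactly the paper's approach, and your description of the main-term mechanism is accurate.

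The error analysis, however, is muddled in two places. First, the monotonicity remark (``total variation bounded by $\omega_{\bb\psi}(\b k_{-i},1)\ll B^2$'') does \emph{not} by itself yield any power saving: in the Abel identity
\[
\sum_{t<z} S(t)\bigl(\omega_n(t)-\omega_n(t+1)\bigr),
\]
bounding the error piece $O(B^\varepsilon t^{-\gamma_n})$ by its supremum at $t=1$ and then telescoping the increments recovers only $O(B^{2+\varepsilon})$, which is useless. What actually works---and what the paper does---is to insert the \emph{pointwise} increment bound of Lemma~\ref{partials}(2) and sum $t^{-\gamma_n}$ against it; this gives the saving $B^{1+4/(4+\deg\Delta_i)+\varepsilon}$. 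You do cite Lemma~\ref{partials}(2) earlier, so you have the right tool; just drop the monotonicity justification, which is a red herring here (monotonicity is only used to make the telescoping of the \emph{main} term clean).

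Second, the savings do not ``combine over all iterations'' multiplicatively, and the rank constraint $r(\pi)\le 3$ plays no role in this lemma. The errors at the successive steps are \emph{additive}: the error produced at step $i$ is $\ll_\varepsilon \|\b m\|\,B^{1+4/(4+\deg\Delta_i)+\varepsilon}$, and the remaining outer sums over $k_1,\ldots,k_{i-1}$ only cost an extra $B^\varepsilon$. Since $\deg\Delta_i\ge 1$, each such term is $\ll\|\b m\|\,B^{9/5+\varepsilon}$, which already beats the target (note the stated exponent $19/20$ is a typo for $19/10$). No appeal to $\sum_i\deg\Delta_i\le 3$ is needed here.
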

\begin{proof}
Letting 
$
\c{K}_i=
\left\{
k_i\in \N\cap [1,Q_i]:
\gcd(k_i,W\prod_{j<i}d_jm_jk_j)=1
\right\}
$
allows us to express 
$T^\natural_{\b{d},\bb{\psi}}(B;\b{m}) $ in the form 
\beq{erfo}{\sum_{k_1 \in \c{K}_1}
\frac{\varrho_1(k_1) f_0(k_1,d_1 m_1)
\tau_1(d_1''m_1'')
}{\gcd(k_1,d_1')^{-1}k_1}
\ldots
\sum_{k_n \in \c{K}_n}
\frac{\varrho_n(k_n) f_0(k_n,d_nm_n)
\tau_n(d_n''m_n'')
}{\gcd(k_n,d_n')^{-1}k_n}
\omega_{\bb{\psi}}\!\l(\b{k}\r)
.}
Let us note that
for all
$\mathds{a} \in \c{U}$ and
$d,m,k \in \Z$,
the identity
\beq{eq:anna mirage}{
\mathds{a}(dmk)
=
\mathds{a}(d)
\mathds{a}(m,d)
\mathds{a}(k,dm)
}
holds due to~\eqref{genesis}.
We shall make repeated use of
Lemma~\ref{triumphat}
to study the
sum over $k_n,k_{n-1},\ldots,k_1$
and at every step of this process,
products of multiplicative functions in $\c{U}$ evaluated at 
several combinations of products and quotients of the integers $d_i,m_i,k_i$ and $\gcd(d_i,m_i)$
will enter the stage.~
Therefore,
using
the coprimality conditions imprinted in the definitions of $\c{D},\c{M},\c{K}$
and~\eqref{all+},
we shall always use of~\eqref{eq:anna mirage} and the group law in $\c{U}$ to rewrite
these terms in the succinct form
\beq{star}{ 
\l(\prod_{i=1}^n \mathds{a}_{1,i}(d_i)\r)\l(
\prod_{i=1}^n \mathds{a}_{2,i}(m_i,d_i) \r)\l(
\prod_{i=1}^n \mathds{a}_{3,i}(k_i,d_im_i) \r)
,}
for some $\mathds{a}_{i,j} \in \c{U}$.
While
at each successive
stage
the values of the functions $\mathds{a}_{i,j}$ will vary,
this will, however,
leave
unspoilt the validity of our lemma.

We begin by applying Lemma~\ref{awky} 
for
$i=n$,
$d=d_n$,
$m=m_n$
and
\[ 
k_0=\prod_{i=1}^{n-1}
d_im_ik_i
.\]
Denoting for all $t\in \R \cap [1,2Q_n]$,
\[S(t):=
\hspace{-0,4cm}
\sum_{\substack{k_n\leq t\\ \gcd(k_n,Wk_0)=1}}
\frac{\varrho_n(k_n) f_0(k_n,d_nm_n)
\tau_n(d_n''m_n'')
}{\gcd(k_n,d_n')^{-1}k_n},
\]
and taking under consideration
the inequalities
$d_i,m_i,k_i\ll B^n$ 
we deduce that
\[S(t)=
c_n  \sigma_n\!\l(d_n\r) 
\tau_n^\sharp\!\l(m_n'\r) 
\gamma
+O_\varepsilon\!
\l(
B^\varepsilon
t^{-\gamma_n}\r)
,\]
where
\[\gamma=h_{1}\!\l(\prod_{j\neq n}d_jm_jk_j\r)
h_2(d_n)
h_3(m_n,d_n)
\]
for a constant $c_n>0$
and functions $h_i \in \c{U}$, all of which are
independent of $B,d_i,m_i$ and $k_i$. 
The integers $d_im_ik_i$ are coprime in pairs, and hence letting
\[
\mathds{a}_{1,i}=
\mathds{a}_{2,i}=
\mathds{a}_{3,i}=
h_1, i\neq n
%\]
\ \ \text{and} \ \
%\[
\mathds{a}_{1,n}=h_2,
\mathds{a}_{2,n}=h_3,
\mathds{a}_{3,n}=1,
\]
allows us to see that
$\gamma$ is of the shape~\eqref{star}.

Define for fixed $k_1,\ldots,k_{n-1}$ the function
$\omega_n(x):=\omega_{\bb{\psi}}(k_1,\ldots,k_{n-1},x)$
and denote by $S_\omega(t)$
the sum one obtains by replacing 
$\varrho_n(k_n)$ in the definition of $S(t)$ by
$\varrho_n(k_n)\omega_n(k_n)$.
If $\psi_n=0$ then the definition of $\omega_{\boldsymbol{\psi}}(\b{v})$
implies 
\[S_{\omega}(Q_n)=
\omega_{(\psi_1,\ldots,\psi_{n-1},0)}(k_1,\ldots,k_{n-1},1) 
S(Q_n),\]
while 
if $\psi_n=1$ then
letting
$z:=[Q_n]+1$
and observing that 
$\omega_n(z)=0$, by
part $(3)$ of Lemma~\ref{partials}
and
the discrete version of 
partial summation
we are provided with
\[
S_\omega(Q_n)=
\sum_{\substack{t \in \N \\ 1 \le t \le z-1}}
S(t)(\omega_n(t)-\omega_n(t+1))
,\]
which equals
\[
\omega_n(1)
c_n  \sigma_n\!\l(d_n\r) 
\tau_n^\sharp\!\l(m_n'\r) 
\gamma 
+O_\varepsilon\!\!\l(
B^\varepsilon
\!
\sum_{\substack{t \in \N \\ 1 \le t \le z-1}}
\frac{|\omega_n(t+1)-\omega_n(t)|}{
t^{\gamma_n}}\r).\]
Employing 
part $(2)$ of Lemma~\eqref{partials}
for
$i=n,\b{v}=(k_1,\ldots,k_{n-1},t)$
and
letting
\[\omega_{n-1}(t)=\omega_{(\psi_1,\ldots,\psi_{n-1},0)}(k_1,\ldots,k_{n-2},t,1),\]
a simple computation reveals that
\[
S_\omega(Q_n)=
\omega_{n-1}(k_{n-1})
c_n  \sigma_n\!\l(d_n\r) 
\tau_n^\sharp\!\l(m_n'\r) 
\gamma 
+O_\varepsilon\!
\l(
\|\b{m}\|
B^{1+\frac{4}{4+\deg(\Delta_n)}+\varepsilon}
\r),\]
regardless of the value of
$\psi_n$.
Letting 
\[
\gamma'=
c_n  \sigma_n\!\l(d_n\r) 
\tau_n^\sharp\!\l(m_n'\r) 
\l(\prod_{i=1}^n \mathds{a}_{1,i}(d_i)\r)\l(
\prod_{i=1}^n \mathds{a}_{2,i}(m_i,d_i) \r) 
\]
and noting that $S_\omega(Q_n)$ is the sum over $k_n$ in~\eqref{erfo},
we deduce 
that,
up to an admissible error term,
$T^\natural_{\b{d},\bb{\psi}}(B;\b{m})$ equals 
\begin{align*}
&\gamma'
\sum_{k_1 \in \c{K}_1}
\frac{\varrho_1(k_1)}{k_1}
\tau_1(d_1''m_1'')
\gcd(k_1,d_1')
(\mathds{a}_0
\mathds{a}_{3,1})(k_1,d_1 m_1)
\ldots
\sum_{k_{n-1} \in \c{K}_{n-1}}
\frac{\varrho_{n-1}(k_{n-1})}{k_{n-1}}
\times \\ &
\times \tau_1(d_{n-1}''m_{n-1}'') 
\gcd(k_{n-1},d_{n-1}') (\mathds{a}_0
\mathds{a}_{3,n-1})(k_{n-1},d_{n-1} m_{n-1})
\omega_{n-1}(k_{n-1})
.\end{align*} 
Repeating this process inductively to evaluate the sum over $k_{n-1}, \ldots, k_1$,
yields the desired result
upon using part $(1)$ of Lemma~\ref{partials}.
\end{proof}
Perusing~\eqref{t:2}, Lemmas~\ref{prepar+},~\ref{lemmon},
and letting
\[
c=2^nc_0c_1,
g_i=(\mathds{a}_0 \mathds{a}_{1,i})^{-1},
h_i=(\mathds{a}_0
\mathds{a}_{2,i})^{-1}
,\]
allows us to
achieve
the anticipated catharsis
of the verification of Theorem~\ref{b evans}.  
\section{Employing the Rosser--Iwaniec sieve}
\label{combo mac iwa}
We shall make use of the \textit{Fundamental lemma of sieve theory};
we choose to employ the version of the lemma supplied
in~\cite[Lem. 6.3]{iwa}. 
\begin{lemma}[Fundamental lemma of sieve theory]
\label{funda}
Let $\kappa>0$ and $y>1$. There exist two sets of real numbers 
$\Lambda^+=(\lambda_d^+)$ and 
$\Lambda^-=(\lambda_d^-)$
depending only on $\kappa$ and $y$ with the following properties:
\begin{align}
&\lambda_1^\pm=1, \label{643}\\
|&\lambda_d^\pm| \leq 1 \ \text{if} \ 1<d<y, \label{644} \\
&\lambda_d^\pm=0 \ \text{if} \ d\geq y,\label{645}
\end{align}
and for any integer $n>1$,
\beq{646}{\sum_{d|n}\lambda_d^-\leq 0 \leq \sum_{d|n}\lambda_d^+.}
Moreover, for any multiplicative function $g(d)$ with $0\leq g(p)<1$ and satisfying the dimension condition
\beq{647}{\prod_{w\leq p <z}(1-g(p))^{-1}\leq
\l(\frac{\log z}{\log w}\r)^\kappa
\l(1+\frac{K}{\log w}\r)}
for all $2\leq w <z\leq y$ we have 
\beq{648}{\sum_{d|P(z)}\lambda_d^{\pm}g(d)=
\l(1+O\!\l(e^{-1/{\varpi}}\l(1+\frac{K}{\log z}\r)^{10}\r)\r)
\prod_{p<z}(1-g(p)),}
where $P(z)$ denotes the product of all primes $p<z$ and $
{\varpi}
=\log z/\log y$,
the implied constant
depending only on $\kappa$.
\end{lemma}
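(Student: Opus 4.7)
The plan is to construct the weights $\lambda_d^\pm$ as the combinatorial $\beta$-sieve of Rosser and Iwaniec for a truncation parameter $\beta = \beta(\kappa)$ chosen in terms of the sieve dimension. For squarefree $d = p_1 p_2 \cdots p_r < y$ written with primes in decreasing order $p_1 > p_2 > \cdots > p_r$, I would set $\lambda_d^+ = (-1)^r$ precisely when, for every odd index $j \leq r$, the partial product satisfies $p_1 p_2 \cdots p_{j-1} p_j^{\beta+1} < y$; analogously $\lambda_d^-$ uses even indices $j$, with both weights vanishing otherwise. The three elementary assertions, namely $\lambda_1^\pm = 1$, $|\lambda_d^\pm| \leq 1$, and $\lambda_d^\pm = 0$ for $d \geq y$, are then immediate from the construction.

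Next I would verify the combinatorial inequalities $\sum_{d|n}\lambda_d^- \leq 0 \leq \sum_{d|n}\lambda_d^+$ for $n > 1$ by induction on $\omega(n)$. The main tool is a Buchstab-type decomposition of $\sum_{d|n}\lambda_d^\pm$ according to the smallest prime divisor of $d$; the truncation condition in the definition of $\lambda_d^\pm$ is designed precisely so that, at each odd (respectively even) level of the inclusion-exclusion tree, the truncated remainder carries a favourable sign.

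For the main estimate, set $V(z) = \prod_{p<z}(1-g(p))$ and $V^\pm(z) = \sum_{d|P(z)}\lambda_d^\pm g(d)$. Unfolding the support conditions of $\lambda_d^\pm$ and applying Buchstab's identity repeatedly converts the ratio $V^\pm(z)/V(z) - 1$ into an alternating series of iterated integrals, each controlled by the dimension hypothesis. Iterating and invoking the standard bounds for the Iwaniec sieve functions $F_\kappa$ and $f_\kappa$ associated to the $\beta$-sieve delivers the desired exponential decay $e^{-1/\varpi}$, and the polynomial correction $(1+K/\log z)^{10}$ arises from propagating the explicit error in the dimension condition through each Buchstab step.

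The main obstacle is securing the sharp exponential decay uniformly in $\kappa$, $K$, and $g$. Each Buchstab iteration picks up a multiplicative loss from the dimension condition, so one must choose $\beta = \beta(\kappa)$ just large enough to compensate, and then apply Laplace-transform estimates to $F_\kappa$ and $f_\kappa$ in order to preserve the exponential decay. Verifying that the final polynomial exponent on $(1+K/\log z)$ can be bounded by $10$ independently of $z$ and of $g$ requires a careful book-keeping of the $\kappa$-dependence at every stage of the iteration; this is the most technical point and is the reason the statement above is simply imported from \cite[Lem.~6.3]{iwa} rather than re-proved from scratch.
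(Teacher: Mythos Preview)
The paper does not prove this lemma at all: it is quoted verbatim from \cite[Lem.~6.3]{iwa} and used as a black box. Your sketch of the Rosser--Iwaniec $\beta$-sieve construction is the correct underlying argument, and you yourself note in the final sentence that the result is imported rather than re-proved; so there is no discrepancy to report beyond the fact that the paper's ``proof'' consists solely of the citation.
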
 
Lemma~\ref{19.a} and
Proposition~\ref{r2d}
show that if
$\lambda^-_d$
is chosen with respect to $y=B^{\frac{1}{100n(n+1)}}$
then
\[
N(\pi,B)
\gg
B^2
\sum_{\b{d} \in \c{D}}
\lambda_{d_1\cdots d_n}^-
\prod_{i=1}^n
\frac{\sigma_i(d_i)}{\tau(d_i)d_i}u_i(d_i)
,
\] 
where we have made use of~\eqref{644},\eqref{645} and
$\sum_{d_1\cdots d_n\leq y}\|\b{d}\|^n\ll y^{n+1+\varepsilon}$.
Therefore Theorem~\ref{thm:lower} would
follow from proving that
the sum over $\b{d}$ is $\gg (\log B)^{-n/2}$.
We shall do so by applying Lemma~\ref{funda}
and we begin by verifying its assumptions.

Defining the multiplicative function
\[g(d)=
\frac{\mu(d)^2}{d\tau(d)}
\b{1}_W(d)
\hspace{-0,5cm}
\sum_{\substack{\b{d} \in \N^n \\
d_1\cdots d_n=d \\   
\gcd(d_i,d_j)=1,
i\neq j 
}}
\prod_{i=1}^n
\sigma_i(d_i)u_i(d_i)
,\]
where
$\b{1}_W$ 
is the characteristic functions of the integers coprime to $W$,
allows us to write
\[
\sum_{\b{d} \in \c{D}}
\lambda_{d_1\cdots d_n}^{-}
\prod_{i=1}^n
\frac{\sigma_i(d_i)}{\tau(d_i)d_i}
=\sum_{
d|P
(B^{\varpi/100n(n+1)})
}
\lambda_d^{-}
g(d)
\]
and we shall next show that~\eqref{647} is satisfied with 
$\kappa=n/2$.
We have
$g(p)=0$ for all $p\leq D_0$,
while in the range
$p>D_0$ we have
\[g(p)=\frac{1}{2p}
\sum_{i=1}^n
\sigma_i(p)
u_i(p)
.\]
The estimates 
$u_i(p),\sigma_i(p)\ll 1$ reveal that
$g(p)\ll 1/p$ and hence,
enlarging $D_0$ if necessary,
we obtain $g(p)<1$. 
The fact that $u_i(p)\geq 0$ follows from
$u_i\in \c{U}$, thereby showing that
the inequality 
$g(p)\geq 0$ is a consequence of
$\sigma_i(p)\geq 0$.
Indeed, by~\eqref{bwv 988},
\[
\sigma_i(p)=\sum_{\substack{\xi\md{p}\\ \Delta_i(\xi,1)\equiv 0 \md{p}}}
\l(1+\l(\frac{\delta_i(\xi,1)}{p}\r)\r)
+O\!\l(\frac{1}{p}\r),
\]
which is a non-negative even integer  
up to an error $O(\frac{1}{p})$,
thus
for large $p$ it attains negative values only when
$\varrho_i(p)=-\tau_i(p).$ In this case~\eqref{bwv 988}
reveals that
$\sigma_i(p)$ equals
$\tau_i(p)(p+1)^{-1}$,
which is again non-negative.

Using a Taylor expansion
leads us to
\[\log \prod_{  p <z}(1-g(p))^{-1}
=\sum_{k=1}^{\infty}\frac{1}{k}\sum_{  p <z}g(p)^k
\]
whenever
$z>D_0$.
The estimate
$g(p)\ll 1/p$ shows
that the sum of all terms with $k\geq 2$ equals
$c_0+O(1/z)$ for some constant $c_0$ independent of $z$.
Let us observe that
the sequence
\[a(p)=\sum_{i=1}^n\l(\sigma_i(p)-\tau_i(p)-\varrho_i(p)\r)\] satisfies $a_i(p)\ll 1/p$,
and hence
the remaining sum equals
\[\sum_{ p <z}g(p)
= 
\frac{1}{2}\sum_{i=1}^n 
\l(\sum_{D_0<p<z}\frac{\tau_i(p)}{p}\r)
+
\frac{1}{2}\sum_{i=1}^n 
\l(\sum_{D_0<p<z}\frac{\varrho_i(p)}{p}\r) 
+a+O\!\l(\frac{1}{z}\r),\]
where $a=\sum_{p>D_0}a_p/p$.
By~\eqref{mertenoulis} and~\eqref{mertenoulis'}  
we can therefore deduce that  
\[\sum_{ p <z}g(p)=\frac{n}{2}\log \log z +c_1+O\!\l(\frac{1}{\log z}\r),\] 
thus infering
\[\prod_{p<z}(1-g(p))^{-1}
=e^{c_0+c_1}
\l( \log z \r)^{n/2}
\l(1+O(1/\log z)\r),
\] from which~\eqref{647} follows in the case $w>D_0$.
In the remaining cases
$w\leq D_0<z$ and $z<w \leq D_0$,
the product
$\prod_{ w\leq p <z}(1-g(p))^{-1}$
equals
 $\prod_{1+D_0\leq  p <z}(1-g(p))^{-1}$ 
and
$1$ respectively;
they are
$\leq (\log z/\log w)^{n/2}
\l(1+O(1/\log w)\r)$ in both cases.
This finishes the verification of~\eqref{646},
we have therefore obtained the validity of
\[\sum_{d|P(B^{{\varpi}/100n(n+1)})}\lambda_d^-
g(d)
=\l(1+O\l(e^{-1/\varpi}\l(1+\frac{1}{\varpi\log B}\r)^{10}\r)\r)
\prod_{ p <B^{{\varpi}/100n(n+1)}}(1-g(p))
.\] 
Fixing a suitably
small positive value for $\varpi$ 
ensures that the sum 
behaves asymptotically as
\[
\prod_{ p <B^{{\varpi}/100n(n+1)}}(1-g(p))
\asymp \l(\log B\r)^{-n/2},\]
an estimate
which concludes the proof of Theorem~\ref{thm:lower}.
\bibliographystyle{amsalpha}
\bibliography{neutlast}

\newcommand{\etalchar}[1]{$^{#1}$}
\providecommand{\bysame}{\leavevmode\hbox to3em{\hrulefill}\thinspace}
\providecommand{\MR}{\relax\ifhmode\unskip\space\fi MR }
% \MRhref is called by the amsart/book/proc definition of \MR.
\providecommand{\MRhref}[2]{%
  \href{http://www.ams.org/mathscinet-getitem?mr=#1}{#2}
}
\providecommand{\href}[2]{#2}
\begin{thebibliography}{BCF{\etalchar{+}}15}

\bibitem[BCF{\etalchar{+}}15]{bharg}
M.~Bhargava, J.~E. Cremona, T.~Fisher, N.~G. Jones, and J.~P. Keating,
  \emph{What is the probability that a random integral quadratic form in {$n$}
  variables has an integral zero~{?}}, Internat. Math. Res. Notices (to appear)
  (2015).

\bibitem[BSJ14]{brick}
T.~Browning and M.~Swarbrick~Jones, \emph{Counting rational points on del
  {P}ezzo surfaces with a conic bundle structure}, Acta Arith. \textbf{163}
  (2014), no.~3, 271--298.

\bibitem[Dan99]{stephan}
S.~Daniel, \emph{On the divisor-sum problem for binary forms}, J. reine angew.
  Math. \textbf{507} (1999), 107--129.

\bibitem[Enr97]{enri}
F.~Enriques, \emph{{Sulle irrazionalit{\`{a}} da cui pu{\`{o}} farsi dipendere
  la risoluzione d'un equazione algebrica {$f(x,y,z)=0$} con funzioni razionali
  di due parametri}}, {Math. Ann.} \textbf{49} (1897), 1--23.

\bibitem[Guo95]{guo}
C.~R. Guo, \emph{On solvability of ternary quadratic forms}, Proc. London Math.
  Soc. (3) \textbf{70} (1995), no.~2, 241--263.

\bibitem[Hoo74]{hool3}
C.~Hooley, \emph{On the intervals between numbers that are sums of two squares.
  {III}}, J. reine angew. Math. \textbf{267} (1974), 207--218.

\bibitem[Hoo93]{hool1}
\bysame, \emph{On ternary quadratic forms that represent zero}, Glasgow Math.
  J. \textbf{35} (1993), no.~1, 13--23.

\bibitem[Hoo07]{hool2}
\bysame, \emph{On ternary quadratic forms that represent zero. {II}}, J. reine
  angew. Math. \textbf{602} (2007), 179--225.

\bibitem[IK04]{iwa}
H.~Iwaniec and E.~Kowalski, \emph{Analytic number theory}, American
  Mathematical Society Colloquium Publications, vol.~53, American Mathematical
  Society, Providence, RI, 2004.

\bibitem[Isk79]{isko}
V.~A. Iskovskih, \emph{Minimal models of rational surfaces over arbitrary
  fields}, Izv. Akad. Nauk SSSR Ser. Mat. \textbf{43} (1979), no.~1, 19--43.

\bibitem[Lou13]{Lou13}
D.~Loughran, \emph{The number of varieties in a family which contain a rational
  point}, \url{http://arxiv.org/abs/1310.6219} (2013).

\bibitem[LS15]{smeets}
D.~Loughran and A.~Smeets, \emph{Fibrations with few rational points},
  \url{http://arxiv.org/abs/1511.08027} (2015).

\bibitem[Man86]{maninakos}
Y.~I. Manin, \emph{Cubic forms}, second ed., North-Holland Mathematical
  Library, vol.~4, North-Holland Publishing Co., Amsterdam, 1986.

\bibitem[MV07]{mova}
H.~L. Montgomery and R.~C. Vaughan, \emph{Multiplicative number theory. {I}.
  {C}lassical theory}, Cambridge Studies in Advanced Mathematics, vol.~97,
  Cambridge University Press, Cambridge, 2007.

\bibitem[PV04]{poonen_voloch}
B.~Poonen and J.~F. Voloch, \emph{Random {D}iophantine equations}, Arithmetic
  of higher-dimensional algebraic varieties ({P}alo {A}lto, {CA}, 2002), Progr.
  Math., vol. 226, Birkh\"auser Boston, Boston, MA, 2004, With appendices by
  Jean-Louis~Colliot-Th{\'e}l{\`e}ne and N. M. Katz, pp.~175--184.

\bibitem[Ser90]{Ser90}
J.-P. Serre, \emph{Sp\'ecialisation des \'el\'ements de {${\rm Br}\sb 2( {\bf
  Q}(T\sb 1,\ldots,T\sb n))$}}, C. R. Acad. Sci. Paris S\'er. I Math.
  \textbf{311} (1990), no.~7, 397--402.

\bibitem[Sko96]{skoratsis}
A.~N. Skorobogatov, \emph{Descent on fibrations over the projective line},
  Amer. J. Math. \textbf{118} (1996), no.~5, 905--923.

\bibitem[Sof14]{conconcon}
E.~Sofos, \emph{Uniformly counting rational points on conics}, Acta Arith.
  \textbf{166} (2014), no.~1, 1--14.

\end{thebibliography}
\end{document}